\documentclass[12pt]{article}


\usepackage{amsmath,amsthm,amssymb,amsfonts}
\usepackage{hyperref}
\usepackage[margin=1.15in]{geometry}
\usepackage{setspace}
\usepackage{yfonts}


\usepackage[T1]{fontenc}\usepackage[upright]{fourier}\usepackage{baskervald}


\usepackage{dsfont}



\newcommand{\textfrc}[1]{{\frcseries#1}}
\newcommand{\mathfrc}[1]{\text{\textfrc{#1}}}


\usepackage[all]{xy}

\usepackage{tikz}
\usetikzlibrary{positioning,chains,fit,shapes,calc}
\usepackage{tikz-cd}

\usepackage{cancel}

\usepackage{verbatim}


\theoremstyle{plain}
\makeatletter
\newtheorem*{rep@theorem}{\rep@title}
\newcommand{\newreptheorem}[2]{%
\newenvironment{rep#1}[1]{%
 \def\rep@title{#2 \ref{##1}}%
 \begin{rep@theorem}}%
 {\end{rep@theorem}}}
\makeatother

\newtheorem{thm}{Theorem}[section]
\newtheorem{lem}[thm]{Lemma}
\newtheorem{prop}[thm]{Proposition}
\newtheorem{cor}[thm]{Corollary}

\theoremstyle{definition}
\makeatletter
\newtheorem*{cont@example}{\cont@title}
\newcommand{\newcontexample}[2]{%
\newenvironment{cont#1}[1]{%
 \def\cont@title{#2 \ref{##1} continued}%
 \begin{cont@example}}%
 {\end{cont@example}}}
\makeatother

\newtheorem{defn}[thm]{Definition}

\newtheorem{exmp}[thm]{Example}

\newtheorem{rem}[thm]{Remark}

\theoremstyle{remark}

\newreptheorem{thm}{Theorem}
\newreptheorem{lem}{Lemma}
\newcontexample{exmp}{Example}
\newreptheorem{prop}{Proposition}


\newcommand{\smallboxtimes}{ \mathchoice
{\mbox{\scriptsize$\boxtimes$}}
{\mbox{\scriptsize$\boxtimes$}}
{\mbox{\tiny$\boxtimes$}}
{\mbox{\tiny$\boxtimes$}}
}

\title{Curved $\mathbf{A}_\infty$-categories\footnote{We consider only $\mathbf{A}_\infty$-algebras here, however this is only to reduce the notational complexity.  All results hold for $\mathbf{A}_\infty$-categories.}  \ :  adjunction and homotopy
}
\author{Jeffrey Armstrong and Patrick Clarke}
\date{\today}

\begin{document}

\maketitle

\begin{abstract}
We develop a theory of curved $\mathbf{A}_\infty$-categories around equivalences of their module categories.  This allows for a uniform treatment of curved and 
uncurved $\mathbf{A}_\infty$-categories which generalizes the classical theory of uncurved  $\mathbf{A}_\infty$-algebras.  Furthermore, the theory is sufficiently general to treat both Fukaya categories and categories of matrix factorizations, as well as to provide a context in which unitification and categorification of pre-categories can be carried out.
 
Our theory is built around two functors: the adjoint algebra functor $U_e$
and the  functor $Q_*$.  The bulk of the paper is dedicated to proving crucial adjunction and homotopy theorems about these functors.
In addition, we explore the non-vanishing of the module categories and give a precise statement and proof the folk result known as ``Positselski-Kontsevich vanishing''.

\end{abstract}

\section{Introduction.}

The theory here treats {curved and uncurved} $\mathbf{A}_\infty$-algebras on equal 
footing and all our constructions 
are valid over an {arbitrary} unital commutative {ring} $S$.  At the heart, we 
have found a {notion of} homotopy {equivalence} for curved strictly unital $\mathbf{A}_\infty$-algebras. 
This notion {agrees with the classical notion} of $\mathbf{A}_\infty$-homotopy equivalence when
the algebras are uncurved and $S$ is a field (Theorem \ref{theorem:quillen=classical}).

Our approach is to {focus on} the category of {modules} $\operatorname{Mod_\infty}(A)$  
rather than on the $\mathbf{A}_\infty$-algebra $A$ itself.  We treat $H^0(\operatorname{Mod_\infty}(A))$
as the object that should be preserved.  Thus a morphism $A \to A'$ is defined to be an equivalence 
if it results in an equivalence of the categories $H^0(\operatorname{Mod_\infty}(A))$ and $H^0(\operatorname{Mod_\infty}(A'))$.

In order to approach the question of when  categories of the form $H^0(\operatorname{Mod_\infty}(A))$  
are equivalent, we develop {homotopy theory} which behaves functorialy with respect to morphisms $A \to A'$.
To do this we restrict attention to the subcategory of {strict morphisms} 
$\operatorname{Mod_\infty^{\text{st}}}(A)$  (defined in Corollary \ref{corollary-definition-of-mod-st-A})  of a curved strictly unital $\mathbf{A}_\infty$-algebra 
(Definition \ref{definition-of-S-linear-curved-unital-A-infinity-algebra-morphism}). There is then a functor 
$$
\operatorname{Mod_\infty^{\text{st}}} \colon \mathbf{Alg}_{\infty} \to \mathbf{ModCat}
$$
from the category 
of strictly unital curved $\mathbf{A}_\infty$-algebras to the category of Quillen model categories and Quillen adjunctions.  This functor comes with an equivalence 
$$
\operatorname{Mod_\infty^{\text{st}}}(A)[\mathcal{W}_A^{-1}] \xrightarrow{\sim}  H^0(\operatorname{Mod_\infty}(A))
$$
from the homotopy category of the strict morphisms 
to the $0^{\text{th}}$-cohomology category of  $\operatorname{Mod_\infty}(A).$  This means that a morphism $A \to A'$ of $\mathbf{A}_\infty$-algebras produces 
an adjunction 
$$
\begin{tikzcd}
H^0(\operatorname{Mod_\infty}(A)) \arrow[bend left = 10]{r}{L}
& 
H^0(\operatorname{Mod_\infty}(A')) \arrow[bend left = 10]{l}{R},
\end{tikzcd}
$$
and the adjunction of a composition is the composition of the adjunctions.

Our focus is primarily on two  amazing constructions that  make the 
homotopy theory above possible.  The first is in {Section \ref{section-adjoint-algebra}} which treats  an auxiliary curved dg-algebra $U_e(A)$ (defined in Proposition \ref{proposition-defining-UeA}) called the {adjoint algebra} whose usefulness begins with an equivalence\footnote{This is a rare example of an actual isomorphism of categories.} of categories
 $$
 \operatorname{Mod_\infty^{\text{st}}}(A) \leftrightarrow  \operatorname{Mod}_\text{dg}^\text{st}(U_e(A))
 $$
 (Theorem \ref{theorem-isomorphism-of-A-and-UeA-modules}).  
 In addition, the assignment $A \mapsto U_e(A)$ is functorial, 
and 
$U_e$ is left adjoint to the inclusion $\mathbf{Alg}_\text{dg}^\ast \hookrightarrow \mathbf{Alg}_\infty$.
In other words, $U_e(A)$ is universal in the sense 
that there is a natural isomorphism
$$
(\omega \circ -   \circ i_\bullet) \colon 
\mathbf{Alg}_\text{dg}^\ast(U_e(-), -)
 \stackrel{\sim}{\Longrightarrow}
\mathbf{Alg}_\infty(-, -)
$$
between functors on 
 $\mathbf{Alg}_\infty^\text{op} \times  \mathbf{Alg}_\text{dg}^\ast$ 
 (Theorem \ref{theorem-universal-property-of-U_e(A)}).  Ultimately, $U_e(A)$ {guarantees} the {functoriality} of our constructions.

 The second construction appears in {Section \ref{section:module-adjunctions}} is the {basis} of the Quillen {model} category {structure} on  $\operatorname{Mod_\infty^{\text{st}}}(A)$.
It begins with the fact that  $U_e(A)$ can be given the structure of an $A-A$-bimodule (Lemma \ref{lemma-UA-bimod}). This defines the functor  
 $$
 Q_A = - \overset{\infty}{\otimes} U_e(A) \colon    \operatorname{Mod}_\infty(A)  
\to
  \operatorname{Mod}_\infty(A)
 $$
 (defined in Proposition \ref{proposition-infinity-tensor}) and leads to 
 a crucially important dg-adjunction (Theorem \ref{theorem-definition-of-Q})
 $$
 \operatorname{Mod}_\text{dg}(U_e(A))(Q_A(-), -)  \stackrel{\sim}{\Longrightarrow}  \operatorname{Mod}_\infty(A)(-, -).
$$
Finally, $Q_A$ and the identity functor on  $\operatorname{Mod}_\infty(A)$
are  quasi-equivalent functors on $\operatorname{Mod}_\infty(A)$ (i.e. naturally isomorphic on $H^0(\operatorname{Mod}_\infty(A))$), and 
we show this by constructing an explicit homotopy bounded by their difference (Theorem \ref{theorem-Q-1-quasi-equivalence}).

To see how {a model structure can be built around $Q_A$}, consider an alternative viewpoint on the bar resolution.
Given a dga $\mathcal{A}$ there is an inclusion of dg-categories 
$$
i: \operatorname{Mod_\text{dg}}(\mathcal{A}) \hookrightarrow \operatorname{Mod}_\infty(\mathcal{A})
$$
and the bar resolution is left-adjoint to this inclusion:
$$
\operatorname{Hom_\text{dg}}(\text{Bar}(M), N) = \operatorname{Hom_\infty}(M,N).  
$$
Our $Q_A$ plays a similar role, but now $A$ is any curved $\mathbf{A}_\infty$-algebra and the inclusion is 
$$
i: \operatorname{Mod_\text{dg}}(U_e(A)) \hookrightarrow \operatorname{Mod}_\infty(A).
$$

The last two sections address some lingering questions.  It is in Section \ref{section:classical-theory} that we prove this new notion of equivalence agrees with the classical one  (Theorem \ref{theorem:quillen=classical}).  {Section \ref{section:non-vanishing}} considers {issues surrounding the vanishing} of the category $H^0(\operatorname{Mod_\infty}(A))$, such as those raised by 
Keller-Lowen-Nicolas \cite{keller-lowen-nicolas} and  Positselski \cite{positselski}.   This is crucial to verify that the theory here is not vacuous.  Specifically, Proposition \ref{prop-sufficient-nonvanishing} gives a criterion for non-vanishing using base-change, Theorem \ref{theorem-sufficient-vanishing} spells out explicitly when the {Kontsevich-Positselski} vanishing argument can (and cannot) be applied, and Theorem \ref{theorem-maurer-cartan-identity-image} gives an Orlov-type result \cite{orlov} which focus attention on the {critical values} of the {Maurer-Cartan} function.

Two appendices appear
after the main body of the paper.    Appendix \ref{section-some-homological-algebra} has some needed homological algebra.  The most interesting results here  are the {homotopy inversion theorem for $\mathbf{A}_\infty$-modules} (Theorem \ref{theorem:inversion-for-modules}) and the fact that if
a {morphism}  of differential graded algebras  is a {homotopy equivalence} then the associated adjunction morphisms are too (Theorem \ref{theorem:homotopic-dgs-have-homotopic-modules}).
Appendix \ref{section-homotopy-and-consequences} provides the {details of} the 
{homotopical treatment} of 
$H^0(\operatorname{Mod_\infty}(A))$.  In addition to listing explicitly all relevant details, there is the notable 
 fact that when the $\mathbf{A}_\infty$-algebra is a curved dg-algebra, our homotopy is ``compatible'' with the ``usual'' one in the appropriate sense.  This is spelled out and  we refer the reader to Armstrong \cite{armstrong} for more details.

   Several relevant aspects of the theory of curved $\mathbf{A}_\infty$-algebras are not address, or only mentioned in passing.  For instance, all our constructions are compatible with {deformations by weak bounding cochains} \cite{fooo}.  Making sure that our formulas ``commute'' with such deformations  was a condition we imposed on ourselves from the beginning of the project.  Furthermore, the theory  has all the features needed to treat both Fukaya categories \cite{fooo} and categories of matrix factorizations \cite{eisenbud}.  In fact, the apparent {paradox} that there are non-zero categories of matrix factorizations when it seems one can apply the Kontsevich-Postiselski vanishing argument is resolved here (see Example \ref{example-vanishing-matrix-factorizations}).  Another feature not mentioned is that our theory provides a framework within which one can compare (in an algebraically satisfying way) the many definitions of the Fukaya category.  For example, the natural way to attempt to turn an arbitrary curved $\mathbf{A}_\infty$-precategory $F$ \cite{kontsevich-soibelman-01} into a curved strictly unital $\mathbf{A}_\infty$-category is by looking for 
   a homotopy-initial category among categories $A$ under $F$ where the functor $F \to A$ is an inclusion.  The {FOOO strict unitification procedure} \cite{fooo} can be shown to be an example of this.

Finally, there remain vast unexplored areas.  For instance, it is natural to attempt to develop a transfer theory analogous to that  of Kadeishvili \cite{kadeishvili-80} in the uncurved case (see also Merkulov \cite{merkulov-99}), and a Hochschild cohomology governing deformations in a manner similar to that  described in Gerstenhaber \cite{gerstenhaber-63} (see also Penkava-Schwarz \cite{penkava-schwarz}).  Given our point of view, deformations would presumably run along the lines of Lowen-Van den Bergh \cite{lowen-van-den-bergh-04}.  Also, further research is required to see how Positselski's Koszul duality for weakly curved algebras \cite{positselski-weak} and the application to discrete Morse theory by Nikolov-Zahariev
\cite{nikolov-zahariev-13}
fit in with our picture.

\subsection*{Acknowledgements}
Many of the constructions here were inspired by  similar ones  already  in the literature.   
 Most notably, 
 Kenji Lef\`evre-Hasegawa used the technique of considering the subcategory of strict morphisms and a dg-algebra similar to $U_e(A)$\footnote{Note however,  $U_e(A)$ is not ``Bar-Cobar.''}
\cite{lefevre-hasegawa}. 
 
 We thank Justin Smith for clarifying  subtleties in the bar resolution, and Maxim Kontsevich who pointed out functor $- \overset{\infty}{\otimes} \mathcal{A}$ can be used to relate $\infty$-modules and ordinary modules for an uncurved dga $\mathcal{A}$ during the second author's  2013 visit
to the IH\'ES.  This visit was under the support of National Science Foundation Grant No. 1002477.

In addition, we thank Jonathan Block and 
Tony Pantev 
who allowed us to discuss this material at various points in its development in their seminars at the University of Pennsylvania, and Ludmil Katzarkov, 
Matthew Ballard 
and 
David Favero 
who invited us to speak at the 2013 Matrix Factorizations workshop in Vienna.
Finally, we thank Jim Stasheff who invited us to speak several times in his seminar and provided invaluable feedback.



\section{Notational conventions.} 
\subsection{$S$-modules, tensors and signs.}
We
fix a commutative ring $S$ with identity $1,$ and our constructions are in terms of  
$\Gamma$-graded $S$-modules for $\Gamma =  \mathbf{Z}$ or $\mathbf{Z}/2n\mathbf{Z}.$ 
 Apart from a compact notation for certain ``geometric series'' of modules and morphisms (Definition \ref{definition-geometric-series}), we follow the usual conventions for graded modules over a ring.  
The most noteworthy point among these  is that tensors of morphisms are understood to be taken with Koszul signs.

\begin{defn} {\bf (modules)} 
Usual conventions for modules include:
\begin{itemize}
\item as a module, $S$ is concentrated in degree $0$, 
\item the degree of a homogeneous element $m$ is written $|m| \in \Gamma,$ 
\item  a homogeneous element $m$ has a sign
$$
(-1)^{|m|} \in \{ \pm 1 \}
$$
(when it is unambiguous, simply $(-1)^m$\,), 
\item a shift functor 
$$
M[1]^{i} = M^{i+1},
$$
\item and an $S$-module isomorphism 
 $$
\sigma \colon M \to M[1]
$$
identifying $M^{i}$ with $M[1]^{i-1}$ and 
whose inverse is denoted $\omega.$
\end{itemize}
\end{defn}

\begin{defn} {\bf (morphisms)} Morphisms are graded
$$
\operatorname{Hom}^\Gamma_S(M, N) 
= \bigoplus_{j \in \Gamma} \operatorname{Hom}^j_S(M, N) 
= \bigoplus_{j \in \Gamma} \prod_{i \in \Gamma}  \operatorname{Hom}_S(M^i, N^{i+j}).
$$
\end{defn}

\begin{defn}{\bf (tensors)}
As usual, we have the 
graded $S$-module $M \otimes N$ for any 
graded $S$-modules $M$ and $N$.  Morphisms  can be tensored:  for $\phi \colon M \to N$ and $\psi \colon M' \to N',$
we have 
$$
\phi \otimes \psi \colon M \otimes M' \to N \otimes N'.
$$
This map is taken with Koszul signs;  that is to say,  for a tensor of homogeneous morphisms evaluated on a tensor of homogeneous elements, we have
$$
(\phi \otimes \psi)(m \otimes m') = (-1)^{|\psi| |m|} \phi(m) \otimes \psi(m').
$$
\end{defn}

\begin{defn}{\bf (geometric series)}
\label{definition-geometric-series}
We make the notation 
$$
M^\otimes = \bigoplus_{\ell = 0}^\infty M^{\otimes \ell},
$$
for the {\bf tensor space}
where $M^{\otimes \ell} = \underbrace{M \otimes \dotsm \otimes M}_{\text{$\ell$ copies}}$
and $M^{\otimes 0} = S \cdot 1_\otimes$  for a symbol $1_\otimes $. We also write $M^{\otimes \geq 1}$ for $\bigoplus_{\ell = 1}^\infty M^{\otimes \ell}$.

A similar notation is used for morphisms.  Given $\phi_\bullet \colon M^\otimes \to N,$ we write the {\bf geometric series} as
$$
(\phi_\bullet)^\otimes = \sum_{\ell = 0}^\infty (\phi_\bullet)^{\otimes \ell} \colon M^\otimes \to N^\otimes
$$
where 
$$(\phi_\bullet)^{\otimes \ell} = \sum_{i_1, \dotsm, i_\ell} \phi_{i_1} \otimes \dotsm \otimes \phi_{i_\ell}.$$
We also make use of the ``extension by zero'' convention for direct sums: for a map $\psi$ on a summand of a direct sum, we extend it to the entire direct sum by first  projecting onto the summand in question.\end{defn}

\begin{exmp} {\bf (identity map)}
Using the geometric series conventions (Definition \ref{definition-geometric-series})
the identity map $M^\otimes \to M^\otimes$ can be written 
$\mathbf{1}^{{\otimes}}$ where $\mathbf{1}$ is the identity map $M \to M.$ The domain extension convention allows $\mathbf{1}$ to be thought of as a map $M^\otimes \to M$.
\end{exmp}

\begin{rem} {\bf(tensors of tensors)}
At times we will need to take tensors of tensors, etc.  Such an object is multi-graded by the resulting tensor degrees.  To reflect this structure, we will sometimes use the  notations ${\smallboxtimes}, \odot, \diamond,$ and $\oslash$ for  additional tensors.  

All formulas involving tensors should be understood to be relative to some base tensor symbol in the formula.  This is because the practice of taking tensors of tensors, while seemingly innocuous, is subtle.  For example, 
there is a map 
$$
V \smallboxtimes W \to V \otimes W 
$$
which switches the $\smallboxtimes$ to $\otimes$.  This  is usually an isomorphism.  However if the symbols are not interpreted in a relative way, then it may not be.
Indeed, for $U \neq 0$ and   $V = W = U^\otimes$ this map has a kernel.  
\end{rem}

\begin{rem}
{\bf (signs) } In the interest of clarity, we have left most sign checking out of our proofs.
The job of keeping track of signs can be quite formidable, and can obscure the content of a proof. 
For those interested in checking signs, we recommend drawing  string diagrams between
steps in a computation and 
  counting crossings\footnote{We first saw this trick in Dan Abramovich's notes from Maxim Kontsevich's deformation theory course.}. 
\end{rem}

\begin{figure}[h!]
\centering
\begin{tikzpicture}


\foreach \x/\y in {1/$($,1.5/$\sigma$,2/$\odot$,2.5/$1^{\smallboxtimes}$,3/$\odot$,3.5/$\sigma$,4/$)($, 4.5/$D_2$, 5/${\smallboxtimes}$, 5.5/$1^{\smallboxtimes}$, 6/$)($, 6.5/$\omega$, 7/${\smallboxtimes}$, 7.5/$1^{\smallboxtimes}$, 8/${\smallboxtimes}$, 8.5/$\omega$, 9/$)$}
\node (a\x) at (\x, 2) {\y};


\foreach\v/\w in {2/$\big($, 2.5/$($,3/$\sigma$ , 3.5/$\odot$, 4/$1^{\smallboxtimes}$, 4.5/$)$, 5/$D_2$, 5.5/$\omega$, 6/$\big)$,6.5/${\smallboxtimes}$, 7/$1^{\smallboxtimes}$, 7.5/$\odot$, 8/$1^\otimes$}
\node (b\v) at (\v, 0) {\w};


\draw[->] { [rounded corners] (1.5,1.75) .. controls (2, 1) .. (b3)} [bend left] {};
\draw[->] (3.5,1.75) .. controls (5,1) and (7,1) .. (b8) {};
\draw[->]{ (4.5,1.75) .. controls (5,1) .. (b5)} {};
\draw[->]{  (6.5,1.75) .. controls (6,1) ..  (5.5, .25) } {};
\draw[->](8.5,1.75) -- (b8) {};


\node at (4.8,1.3) [shape= circle, draw] {};
\node at (6,1) [shape= circle, draw] {};

\end{tikzpicture}
\caption{Checking a sign from the proof of Lemma \ref{lemma-UA-bimod} by counting crossings.}
\end{figure}
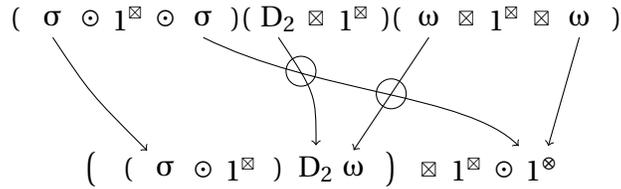


\subsection{Strictly unital $\mathbf{A}_\infty$-algebras.}

We define and verify the category $\mathbf{Alg}_\infty$ of strictly unital $\mathbf{A}_\infty$-algebras. 
This allows us to establish some notational conventions and touch upon typical arguments made when 
 doing $\mathbf{A}_\infty$ computations: the cross-cancel argument and the codifferential property.  We primarily use
 the $b$-notation, and capital letters for the operators on coalgebras.  The more traditional $\frak{m}$ notation
 is introduced in Subsection \ref{subsection:bar-construction}.

\begin{defn}{\bf (strictly unital $\mathbf{A}_\infty$-algebra)}
\label{definition-of-S-linear-curved-unital-A-infinity-algebra}
An $\mathbf{A}_\infty$-algebra is an 
$S$-module $A$ and  a  map
$$
b_\bullet = \sum_{\ell = 0}^\infty b_\ell \in \operatorname{Hom}^1_S((A[1])^\otimes, A[1]).
$$
such that 
$$
B^2 = 0
$$
for $B = \mathbf{1}^\otimes \otimes b_\bullet \otimes \mathbf{1}^\otimes.$  An element 
$e \in A^0$ is called a strict unit if sequence of $S$-modules
$$
0 \to S \to  A \to A/ e \to 0
$$
is split exact, and
for $\eta = \sigma(e)$
\begin{itemize}
\item $ b_2(\eta \otimes \mathbf{1})  = -b_2(\mathbf{1} \otimes \eta)   = \mathbf{1}$, and 
\item $b_\ell(\mathbf{1}^{\otimes } \otimes \eta \otimes \mathbf{1}^{\otimes }) = 0$ for $\ell \neq 2.$
\end{itemize}
Such an element is unique if it exists.
\end{defn}

\begin{prop} {\bf ($\mathbf{A}_\infty$-equations on $b_\bullet$)}
\label{A-infinity-equations-on-b}
The condition
$$
B^2 = 0
$$
is equivalent to 
$$
b_\bullet(B) = b_\bullet(\mathbf{1}^\otimes \otimes b_\bullet \otimes \mathbf{1}^\otimes) = 0.
$$
\end{prop}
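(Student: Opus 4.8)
The plan is to compute $B^2$ directly and sort its terms by how the two copies of $b_\bullet$ interact. Expanding $B^2 = B \circ B$ with $B = \mathbf{1}^\otimes \otimes b_\bullet \otimes \mathbf{1}^\otimes$ via the geometric-series and extension-by-zero conventions, each resulting term contracts a consecutive block of a word by $b_\bullet$, producing a shorter word, and then contracts a consecutive block of that result by $b_\bullet$ again. Every such term is of exactly one of two types: the \emph{nested} terms, in which the second block contains the single output letter of the first $b_\bullet$, and the \emph{disjoint} terms, in which the two blocks occupy separated regions of the word. (There is no third case, since the first contraction outputs a single letter, so a later block either engulfs that letter or is disjoint from it.)

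First I would show that the disjoint terms cancel in pairs. Fixing a word and two disjoint consecutive blocks, there are exactly two terms of $B^2$ yielding the same configuration: one contracting the left block first, one contracting the right block first. Since $b_\bullet$ is homogeneous of odd degree $1$, interchanging the two contractions across the intervening factors produces a Koszul sign of $-1$, so the two terms are negatives of one another and cancel. This is the cross-cancel argument; its only real content is the sign, which — following the conventions of this section — is most reliably checked by counting crossings in the associated string diagram.

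Next I would identify what survives. After the disjoint terms cancel, $B^2$ consists precisely of the nested terms, which, collecting the outer identity factors, assemble as
$$
B^2 = \mathbf{1}^\otimes \otimes \big( b_\bullet(\mathbf{1}^\otimes \otimes b_\bullet \otimes \mathbf{1}^\otimes) \big) \otimes \mathbf{1}^\otimes = \mathbf{1}^\otimes \otimes b_\bullet(B) \otimes \mathbf{1}^\otimes,
$$
where $b_\bullet$ on the right is read, via the domain-extension convention, as a map $(A[1])^\otimes \to A[1]$. Indeed, a nested term is exactly the instruction ``insert one $b_\bullet$ inside a block and then contract that whole block by another $b_\bullet$,'' with identities carrying along the letters outside the block; this is precisely $\mathbf{1}^\otimes \otimes b_\bullet(B) \otimes \mathbf{1}^\otimes$, and the partially-nested terms (where the outer block does not exhaust the word) are accounted for by the nonzero outer identity factors.

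Finally, the equivalence is immediate once this identity is in hand: the assignment $X \mapsto \mathbf{1}^\otimes \otimes X \otimes \mathbf{1}^\otimes$ is injective on maps landing in $A[1]$, since the $p = q = 0$ summand recovers $X$. Hence $B^2 = 0$ if and only if $b_\bullet(B) = 0$, which is the claim. Conceptually this is just the statement that a coderivation of the tensor coalgebra is determined by, and vanishes together with, its corestriction $\pi_1 \circ B = b_\bullet$; but the place where care is genuinely required is the pairwise cancellation of the disjoint terms, so I expect the sign accounting in the cross-cancel step to be the main obstacle.
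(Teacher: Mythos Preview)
Your proof is correct and follows essentially the same approach as the paper: expand $B^2$, split the terms into the nested case $\mathbf{1}^\otimes \otimes b_\bullet(\mathbf{1}^\otimes \otimes b_\bullet \otimes \mathbf{1}^\otimes) \otimes \mathbf{1}^\otimes$ and the two disjoint (``crossed'' vs.\ ``did not cross'') cases, observe that the latter cancel by the Koszul sign, and conclude from the injectivity of $X \mapsto \mathbf{1}^\otimes \otimes X \otimes \mathbf{1}^\otimes$. The paper calls this the \emph{cross-cancel argument} and presents it in four lines; your version is the same argument with more commentary.
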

\begin{proof}{\bf (cross-cancel argument)}
The argument here is typical for these types of statements.  The composition
$B^2$ expands into a sum of three terms.  Two of these terms are identical, but with opposite signs
which arise from whether or not the two $b_\bullet$ crossed each other.
The remaining term  is a map sandwiched between two $\mathbf{1}^\otimes$'s, so the result is zero if
and only if the sandwiched map is zero.  Explicitly
$$
\begin{array}{rcl}
B^2 & = & (\mathbf{1}^\otimes \otimes b_\bullet \otimes \mathbf{1}^\otimes) (\mathbf{1}^\otimes \otimes b_\bullet \otimes \mathbf{1}^\otimes) \\
& = & \mathbf{1}^\otimes \otimes b_\bullet(\mathbf{1}^\otimes \otimes b_\bullet \otimes \mathbf{1}^\otimes)  \otimes \mathbf{1}^\otimes\\ 
&  & + \mathbf{1}^\otimes \otimes b_\bullet \otimes \mathbf{1}^\otimes \otimes b_\bullet \otimes \mathbf{1}^\otimes \text{ \qquad (did not cross) }\\
& & - \mathbf{1}^\otimes \otimes b_\bullet \otimes \mathbf{1}^\otimes \otimes b_\bullet \otimes \mathbf{1}^\otimes \text{ \qquad (crossed) } \\
& = &  \mathbf{1}^\otimes \otimes b_\bullet(\mathbf{1}^\otimes \otimes b_\bullet \otimes \mathbf{1}^\otimes)  \otimes \mathbf{1}^\otimes.
\end{array}
$$
\end{proof}

\begin{rem} {\bf (comultiplication and codifferential)}
In the language of coalgebras, the operator $B$ is a codifferential on $(A[1])^\otimes$
considered as a coalgebra with usual tensor comultiplication 
$$
\Delta \colon  (A[1])^\otimes \to  (A[1])^\otimes \smallboxtimes (A[1])^\otimes
$$
which sends
$$
x_1 \otimes \dotsm \otimes x_k \mapsto 
\sum_{i = 0}^{k} 
x_1 \otimes \dotsm \otimes x_i \smallboxtimes
x_{i+1} \otimes \dotsm \otimes x_k.
$$
This means that in addition to $B^2=0$, we have
$$
\Delta B = (B \smallboxtimes \mathbf{1}^\otimes + \mathbf{1}^\otimes \smallboxtimes B) \Delta.
$$
\end{rem}

\begin{defn} {\bf (morphism of strictly unital $\mathbf{A}_\infty$-algebras)}
\label{definition-of-S-linear-curved-unital-A-infinity-algebra-morphism}
A morphism of strictly unital $\mathbf{A}_\infty$-algebras $f_\bullet \colon (A, b_\bullet) \to (A', b'_\bullet)$ is an element
$$f_\bullet = \sum_{\ell = 1}^\infty f_\ell  \ \in  \ \operatorname{Hom}^0_S((A[1])^{\otimes \geq 1}, A'[1])$$ such that
\begin{itemize}
\item $0 = B' F - F B \colon (A[1])^\otimes \to (A'[1])^\otimes$ for $F = (f_\bullet)^\otimes$, 
\item $f_1(\eta) = \eta'$, and
\item $f_\ell(\mathbf{1}^{\otimes } \otimes \eta \otimes \mathbf{1}^{\otimes }) = 0$ for  $\ell \neq 1$.
\end{itemize}
\end{defn}

\begin{prop}{\bf ($\mathbf{Alg}_\infty$)} 
\label{proposition-composition-of-algebra-morphisms}  Strictly unital $\mathbf{A}_\infty$-algebras form a category $\mathbf{Alg}_\infty$ with composition
$$
g_\bullet \circ f_\bullet = g_\bullet((f_\bullet)^\otimes) \in \operatorname{Hom}^0_S((A[1])^{\otimes \geq 1}, A''[1])
$$
for morphisms $f_\bullet \colon (A, b_\bullet) \to (A', b'_\bullet)$ and 
$g_\bullet \colon (A', b'_\bullet) \to (A'', b''_\bullet)$.  If one sets $h_\bullet = g_\bullet \circ f_\bullet$,
then 
 $$
 H = GF
 $$
 for $F = (f_\bullet)^\otimes$,
  $G = (g_\bullet)^\otimes$, and
   $H = (h_\bullet)^\otimes$, where concatenation indicates the usual composition of maps.
\end{prop}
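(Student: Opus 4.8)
The plan is to reduce the entire statement to the single identity $H = GF$, which is nothing other than the functoriality of the geometric-series assignment $\phi_\bullet \mapsto (\phi_\bullet)^\otimes$; once this is established, all the category axioms follow formally, so I would prove it first and treat it as the heart of the argument.

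To prove $H = GF$ I would compare both sides on each homogeneous piece $(A[1])^{\otimes n}$ and match summands. Unwinding Definition \ref{definition-geometric-series}, $F = (f_\bullet)^\otimes$ sends a length-$n$ tensor to the sum, over all ways of cutting $\{1,\dots,n\}$ into $\ell$ consecutive nonempty blocks (the absence of an $f_0$ term forces nonemptiness), of the tensor of the $f$'s applied blockwise; then $G = (g_\bullet)^\otimes$ regroups the resulting $\ell$ factors into $m$ consecutive blocks and applies the $g$'s. A typical summand of $GF$ is therefore indexed by a \emph{two-level} decomposition of $\{1,\dots,n\}$: a coarse partition into $m$ super-blocks, each refined into $f$-sub-blocks. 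On the other hand $H = (h_\bullet)^\otimes$ cuts $\{1,\dots,n\}$ into $m$ consecutive super-blocks and applies $h_\bullet = g_\bullet((f_\bullet)^\otimes)$ to each, and by its very definition $h_p$ is the sum over all finer cuts of a length-$p$ block into $f$-sub-blocks followed by a single $g$. These are manifestly the same two-level decompositions, so the sums agree term by term. The signs agree by the Koszul rule, which I would verify, as the paper recommends, by counting crossings in the associated string diagrams rather than tracking them directly.

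With $H = GF$ in hand I would first record injectivity of the geometric-series map: projecting $(h_\bullet)^\otimes$ onto the length-one target component $(A''[1])^{\otimes 1}$ returns exactly $h_\bullet$, so $h_\bullet$ is recovered from $H$. Well-definedness, the codifferential condition, and the unit conditions then follow. The composite $h_\bullet$ has degree $0$ and lands in $(A[1])^{\otimes \geq 1}$ since $f_\bullet$ and $g_\bullet$ do; the morphism equation is immediate from
$$B'' H = B'' G F = G B' F = G F B = H B,$$
using $B'' G = G B'$ and $B' F = F B$. The unit condition $h_1(\eta) = \eta''$ holds because the only length-one contribution is $g_1 \circ f_1$ with $f_1(\eta) = \eta'$ and $g_1(\eta') = \eta''$; and $h_\ell(\mathbf{1}^\otimes \otimes \eta \otimes \mathbf{1}^\otimes) = 0$ for $\ell \neq 1$ by a short case analysis, in the spirit of the cross-cancel argument: the $f$-block containing $\eta$ must be the singleton $\{\eta\}$, since otherwise the unit condition on $f$ kills the term, and this produces an $\eta'$ fed into some $g_m$; but for $\ell \neq 1$ one necessarily has $m \geq 2$, whereupon the unit condition on $g$ kills the term.

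Finally, associativity and identities are formal consequences. For composable morphisms $f_\bullet, g_\bullet, k_\bullet$ with geometric series $F, G, K$, both $(k_\bullet \circ g_\bullet) \circ f_\bullet$ and $k_\bullet \circ (g_\bullet \circ f_\bullet)$ have geometric series equal to $K(GF) = (KG)F$ by associativity of ordinary map composition, so they coincide by injectivity. The identity on $A$ is the morphism with first component $\mathbf{1}$ and all higher components zero; its geometric series is $\mathbf{1}^\otimes$, the morphism and unit conditions hold trivially, and $\mathbf{1}^\otimes F = F = F \mathbf{1}^\otimes$ yields the unit laws. I expect the main obstacle to be the first step: the combinatorial identification of the nested block decompositions is conceptually transparent, but making the Koszul signs in $GF$ and in $(h_\bullet)^\otimes$ agree is the delicate bookkeeping, and this is precisely where the crossing-counting technique earns its keep.
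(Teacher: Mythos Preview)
Your proposal is correct and follows essentially the same approach as the paper: the paper also reduces everything to the term-wise identity $(g_\bullet((f_\bullet)^\otimes))^\otimes = (g_\bullet)^\otimes(f_\bullet)^\otimes$ (your two-level block decomposition), deduces $B''H = HB$ by the same chain of equalities, and checks the unit condition by the same case analysis on which $f$-block contains $\eta$. You are more thorough than the paper in explicitly recording the injectivity of $\phi_\bullet \mapsto (\phi_\bullet)^\otimes$ and in verifying associativity and identities, which the paper leaves implicit.
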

\begin{proof}
{\bf (composition of geometric series)} The identity $H = GF$ follows from the fact that $(g_\bullet((f_\bullet)^\otimes))^{\otimes} =  (g_\bullet)^{\otimes}  (f_\bullet)^\otimes$.  This equality reflects the term-wise equality
$$
(g_{i_1} \otimes \dotsm \otimes g_{i_k}) (f_{j_1^{1}} \otimes \dotsm \otimes f_{j^1_{i_1}}) \otimes \dotsm \otimes (f_{j_1^{k}} \otimes \dotsm \otimes f_{j^k_{i_k}}) =
g_{i_1}(f_{j_1^{1}} \otimes \dotsm \otimes f_{j^1_{i_1}}) \otimes \dotsm \otimes 
g_{i_k}(f_{j_1^{k}} \otimes \dotsm \otimes f_{j^k_{i_k}})
$$
which holds when all maps are even (and with appropriate signs when they are not). 

The condition that $H$ ``commutes'' with the $B$'s follows:
$$
B''H = B''GF = GB'F = GFB = HB.
$$
 
  Finally, on strict units: 
$$
h_\ell( \mathbf{1}^\otimes \otimes \eta \otimes \mathbf{1}^\otimes) = f_\bullet( \ (g_\bullet)^\otimes \otimes \eta' \otimes (g_\bullet)^\otimes \ )|_{(A[1])^{\otimes \ell}} = 
\left\{
\begin{array}{ll}
f_1(g_1(\eta)) = \eta'' &\text{if } \ell = 0, \\
0 & \text{otherwise }
\end{array}
\right.
$$ 
as needed.
\end{proof}


\subsection{Strictly unital right modules.}

Strictly unital right modules over an $\mathbf{A}_\infty$-algebra $A$ form a dg-category $\operatorname{Mod}_\infty(A)$.
Within this category is a subcategory of strict morphisms
$\operatorname{Mod_\infty^{\text{st}}}(A).$ The strict morphism category plays a central role later when it is identified with the module category of the curved dg-algebra $U_e(A)$ (Theorem \ref{theorem-isomorphism-of-A-and-UeA-modules}).

\begin{defn}{\bf (strictly unital right module)}
\label{definition-of-unital-right-module}
A strictly unital right module over a strictly unital $\mathbf{A}_\infty$-algebra is an $S$-module $M$ and a 
map 
$$
b^M_\bullet = \sum_{\ell = 0}^\infty b^M _\ell \in \operatorname{Hom}^0_S(M \odot (A[1])^\otimes, M)
$$
such that 
$$
(B^M)^2 = 0
$$
for $B^M = b_\bullet^M \odot \mathbf{1}^\otimes + \mathbf{1} \odot \mathbf{1}^\otimes \otimes b_\bullet \otimes \mathbf{1}^\otimes$.  In addition, 
\begin{itemize}
\item $-b_2^M(\mathbf{1} \odot \eta)   = \mathbf{1}$, and 
\item $b^M_\ell(\mathbf{1} \odot \mathbf{1}^{\otimes } \otimes \eta \otimes \mathbf{1}^{\otimes }) = 0$ for $\ell \neq 2.$
\end{itemize}
\end{defn}

\begin{prop} {\bf ($\mathbf{A}_\infty$-module equations on $b^M_\bullet$)}
The condition
$$
(B^M)^2 = 0
$$
is equivalent to 
$$
b^M_\bullet(B^M)  = b^M_\bullet(b_\bullet^M \odot \mathbf{1}^\otimes + \mathbf{1} \odot \mathbf{1}^\otimes \otimes b_\bullet \otimes \mathbf{1}^\otimes) = 0.
$$
\end{prop}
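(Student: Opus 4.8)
The plan is to reproduce, in the module setting, the cross-cancel argument used for Proposition \ref{A-infinity-equations-on-b}, while keeping careful track of the extra slot introduced by the module element. I would begin by writing $B^M = P + Q$ with $P = b^M_\bullet \odot \mathbf{1}^\otimes$ and $Q = \mathbf{1} \odot \mathbf{1}^\otimes \otimes b_\bullet \otimes \mathbf{1}^\otimes$, so that $(B^M)^2 = P^2 + PQ + QP + Q^2$. Each of these four composites I would then expand according to where its two operators land: either one operator feeds its output into the other (a \emph{nesting} term) or the two act on disjoint inputs (a \emph{side-by-side} term). Since $b^M_\bullet$ always consumes the (unique, leftmost) module slot together with a prefix, while $b_\bullet$ touches only $A[1]$-tensors lying to its right, the bookkeeping is constrained: $P^2$ is purely nesting, $QP$ is purely side-by-side (with $b^M$ on the left), and $PQ$ produces both a nesting and a side-by-side term.

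The genuinely new simplification relative to the algebra case is the $Q^2$ term. Because $Q = \mathbf{1} \odot B$ (with $B$ the algebra codifferential), we have $Q^2 = \mathbf{1} \odot B^2$, and since $A$ is already a strictly unital $\mathbf{A}_\infty$-algebra, $B^2 = 0$, so $Q^2$ vanishes outright. What remains are the side-by-side terms coming from $PQ$ and $QP$: both realize the same configuration of $b^M$ on the left and $b_\bullet$ on a disjoint chunk to the right, and they are identical except for a single Koszul crossing sign, so they cancel in a pair exactly as in Proposition \ref{A-infinity-equations-on-b}.

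After these cancellations, the only survivors are the nesting term of $P^2$ (namely $b^M$ eating $b^M$) and the nesting term of $PQ$ (namely $b^M$ eating $b_\bullet$). These assemble into
$$
b^M_\bullet(b^M_\bullet \odot \mathbf{1}^\otimes + \mathbf{1} \odot \mathbf{1}^\otimes \otimes b_\bullet \otimes \mathbf{1}^\otimes) \odot \mathbf{1}^\otimes = b^M_\bullet(B^M) \odot \mathbf{1}^\otimes,
$$
so that $(B^M)^2 = b^M_\bullet(B^M) \odot \mathbf{1}^\otimes$.

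Finally I would close with the same ``sandwiched map'' observation used for the algebra statement: a map of the form $\phi \odot \mathbf{1}^\otimes$ restricts, on the summand $M \odot (A[1])^\otimes \odot (A[1])^{\otimes 0}$ carrying no trailing $A[1]$-tensors, to $\phi$ itself, whence $\phi \odot \mathbf{1}^\otimes = 0$ if and only if $\phi = 0$. Applying this with $\phi = b^M_\bullet(B^M)$ yields the asserted equivalence $(B^M)^2 = 0 \Leftrightarrow b^M_\bullet(B^M) = 0$. I expect the main obstacle to be purely organizational, namely enumerating the nesting-versus-disjoint cases across the four composites and confirming that each disjoint pair differs from its partner by exactly one crossing sign; the one conceptual point worth flagging is that the potential leftover $\mathbf{1} \odot B^2$ is eliminated for free by the standing hypothesis that $A$ is an $\mathbf{A}_\infty$-algebra.
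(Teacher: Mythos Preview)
Your proposal is correct and follows the same cross-cancel strategy the paper points to; the paper's own proof is the single line ``This uses the same cross-cancel argument as the proof of Proposition \ref{A-infinity-equations-on-b},'' and you have simply written out what that argument becomes in the module setting. Your organization via $B^M = P + Q$ and the use of $Q^2 = \mathbf{1}\odot B^2 = 0$ to dispatch the $b_\bullet$--$b_\bullet$ interactions in one stroke is a clean way to package it, and your final ``sandwiched map'' step is exactly the observation the paper makes in the algebra case.
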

\begin{proof}
This uses the same  cross-cancel argument as the proof of Proposition \ref{A-infinity-equations-on-b}.
\end{proof}

\begin{prop} {\bf ($\operatorname{Mod}_\infty(A)$)}
\label{proposition-definition-of-mod-infinity-A}
Strictly unital right modules over an $\mathbf{A}_\infty$-algebra $A$ form a dg-category $\operatorname{Mod}_\infty(A)$ whose morphism complex is
$$
\operatorname{Mod}_\infty(A)(M, M') = \operatorname{Hom}_S(M \odot (A[1])^\otimes, M')
$$
with the differential 
$$
\delta \phi_\bullet =   b^{M''}_\bullet(\phi_\bullet \odot \mathbf{1}^\otimes) - (-1)^{\phi_\bullet} \phi_\bullet(b_\bullet^{M'} \odot \mathbf{1}^\otimes + \mathbf{1} \odot \mathbf{1}^\otimes \otimes b_\bullet \otimes \mathbf{1}^\otimes) 
$$
and composition 
 $$\psi_\bullet \circ \phi_\bullet = \psi_\bullet(\phi_\bullet \odot \mathbf{1}^\otimes) \in \operatorname{Hom}_S(M \odot (A[1])^\otimes, M'')$$
 for morphisms $\phi_\bullet \colon (M, b^M_\bullet) \to (M', b^{M'}_\bullet)$, and
  $\psi_\bullet \colon (M', b^{M'}_\bullet) \to (M'', b^{M''}_\bullet).$
 
 Furthermore, if one sets
 $\theta_\bullet = \psi_\bullet \circ \phi_\bullet$ then
 these maps satisfy
 $$
\Theta  = \Psi \Phi
 $$
 for $\Phi = \phi_\bullet \odot \mathbf{1}^\otimes$,
  $\Psi = \psi_\bullet \odot \mathbf{1}^\otimes$, and
   $\Theta = \theta_\bullet \odot \mathbf{1}^\otimes$.  In addition, the operator 
   $$[B, \Phi] =
   B^{M'} \Phi - (-1)^{\Phi} \Phi B^{M}$$ satisfies
$$
   [B,\Phi] = (\delta\phi_\bullet) \odot \mathbf{1}^\otimes.
$$
\end{prop}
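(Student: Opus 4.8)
The plan is to prove the two structural identities in the statement, namely $\Theta=\Psi\Phi$ and $[B,\Phi]=(\delta\phi_\bullet)\odot\mathbf{1}^\otimes$, and then to obtain every dg-category axiom as a formal consequence of them together with the module equation $(B^M)^2=0$. The bridge between the two registers is the observation that the thickening operation $\alpha_\bullet\mapsto\alpha_\bullet\odot\mathbf{1}^\otimes$ is injective: post-composing $\alpha_\bullet\odot\mathbf{1}^\otimes$ with the projection $M'\odot(A[1])^\otimes\to M'$ onto the length-zero algebra tail returns $\alpha_\bullet$. Hence any identity proved at the level of the thickened operators may be demoted to an identity of the underlying $\bullet$-maps, and conversely it suffices to verify the axioms after applying $-\odot\mathbf{1}^\otimes$.

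First I would establish $\Theta=\Psi\Phi$. This is the module counterpart of the identity $H=GF$ in Proposition \ref{proposition-composition-of-algebra-morphisms} and is proved the same way: expanding $(\psi_\bullet(\phi_\bullet\odot\mathbf{1}^\otimes))\odot\mathbf{1}^\otimes$ and $(\psi_\bullet\odot\mathbf{1}^\otimes)(\phi_\bullet\odot\mathbf{1}^\otimes)$ term by term, both encode the same splitting---$\phi_\bullet$ consumes $M$ with an initial block of algebra factors, $\psi_\bullet$ consumes the resulting module element with the next block, and the remaining factors pass through---so the summands agree, with matching Koszul signs since the only reordering is the crossing already accounted for in the cited proposition. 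This is nothing but the coassociativity of the deconcatenation comultiplication acting on the module factor.

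The main obstacle is the commutator formula. Writing $B^{M'}=b^{M'}_\bullet\odot\mathbf{1}^\otimes+D$ and $B^{M}=b^{M}_\bullet\odot\mathbf{1}^\otimes+D$ with the common internal operator $D=\mathbf{1}\odot\mathbf{1}^\otimes\otimes b_\bullet\otimes\mathbf{1}^\otimes$, I would expand $[B,\Phi]=B^{M'}\Phi-(-1)^{\phi_\bullet}\Phi B^M$ into four pieces. Using the identity just proved, the two outer pieces collapse to $b^{M'}_\bullet(\phi_\bullet\odot\mathbf{1}^\otimes)\odot\mathbf{1}^\otimes$ and $-(-1)^{\phi_\bullet}\phi_\bullet(b^{M}_\bullet\odot\mathbf{1}^\otimes)\odot\mathbf{1}^\otimes$. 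For the remaining pair $D\Phi-(-1)^{\phi_\bullet}\Phi D$ I would run the cross-cancel argument of Proposition \ref{A-infinity-equations-on-b}: the summands in which the internal $b_\bullet$ acts solely on algebra factors lying to the right of the block consumed by $\phi_\bullet$ occur in both $D\Phi$ and $\Phi D$ with opposite signs---recording whether or not the two maps crossed---and cancel, whereas the summands in which $b_\bullet$ acts inside the block consumed by $\phi_\bullet$ survive only from $\Phi D$ and assemble into $\phi_\bullet(\mathbf{1}\odot\mathbf{1}^\otimes\otimes b_\bullet\otimes\mathbf{1}^\otimes)\odot\mathbf{1}^\otimes$. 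Combining this survivor with the outer term $-(-1)^{\phi_\bullet}\phi_\bullet(b^M_\bullet\odot\mathbf{1}^\otimes)\odot\mathbf{1}^\otimes$ reconstitutes $-(-1)^{\phi_\bullet}(\phi_\bullet B^M)\odot\mathbf{1}^\otimes$, and the total is exactly $(\delta\phi_\bullet)\odot\mathbf{1}^\otimes$. The delicate point is the sign accounting in the cancellation, which I would confirm by the crossing-count method described after Definition \ref{definition-geometric-series}.

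With both identities available, the dg-structure is formal. Since $B$ is an odd operator, $[B,-]$ raises degree by one, and $(\delta\phi_\bullet)\odot\mathbf{1}^\otimes=[B,\Phi]$ shows $\delta$ raises degree by one. Thickening $\delta^2\phi_\bullet$ gives $[B,[B,\Phi]]=(B^{M'})^2\Phi-\Phi(B^M)^2=0$, so $\delta^2=0$ by injectivity. Associativity of $\circ$ is immediate from $\Theta=\Psi\Phi$, which turns it into associativity of honest composition of thickened maps. The Leibniz rule is the graded derivation property $[B,\Psi\Phi]=[B,\Psi]\Phi+(-1)^{\psi_\bullet}\Psi[B,\Phi]$ read back through $\Theta=\Psi\Phi$ and the commutator formula. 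Finally the unit of $M$ is the map $\mathbf{1}^M_\bullet$ whose only nonzero component is the identity in tensor-length zero; its thickening is the identity of $M\odot(A[1])^\otimes$, so it is a two-sided unit for $\circ$, and $\delta\mathbf{1}^M_\bullet=0$ because the identity commutes with $B$.
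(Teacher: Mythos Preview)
Your proof is correct and follows essentially the same route as the paper: both reduce everything to the two identities $\Theta=\Psi\Phi$ and $[B,\Phi]=(\delta\phi_\bullet)\odot\mathbf{1}^\otimes$, proving the first by the term-wise geometric-series argument of Proposition~\ref{proposition-composition-of-algebra-morphisms} and the second by the cross-cancel argument of Proposition~\ref{A-infinity-equations-on-b}, and then read off the dg-axioms from the fact that $\phi_\bullet\mapsto\Phi$ embeds the morphism spaces as a subcomplex of $\operatorname{Hom}_S(M\odot(A[1])^\otimes,M'\odot(A[1])^\otimes)$ closed under composition. Your write-up is simply more explicit about the injectivity of the thickening and about deriving each dg-category axiom, but the logical skeleton is the same.
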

\begin{proof} 
These statements are equivalent to the statement that 
$$
\phi_\bullet \mapsto \Phi
$$
includes $\operatorname{Hom}_S(M \odot (A[1])^\otimes, M')$ as a subcomplex of 
$\operatorname{Hom}_S(M \odot (A[1])^\otimes, M' \otimes (A[1])^\otimes)$ which 
is closed under composition.   
The equality
$$
[B,\Phi] = (\delta \phi_\bullet) \odot \mathbf{1}^\otimes
$$
guarantees the ``subcomplex'' statement, and can be verified using the cross-cancel argument of the proof of Proposition \ref{A-infinity-equations-on-b}.  The ``composition'' statement is checked by the same term-wise equality that appeared in the composition of geometric series in the proof of Proposition \ref{proposition-composition-of-algebra-morphisms}.
\end{proof}

\begin{defn}{\bf (strict morphisms)}
\label{definition-of-strict-morphism}
A strict morphism of strictly unital right modules over an $\mathbf{A}_\infty$-algebra  $(M, b^M_\bullet) \to (M', b^{M'}_\bullet)$ is a closed degree $0$ morphism whose only non-zero element is ``linear'':
$$\phi_\bullet = \phi_1 \in \operatorname{Hom}^0_S(M, M') \cap Z^0(\operatorname{Mod}_\infty(M,M')).$$
\end{defn}

\begin{cor} {\bf ($\operatorname{Mod_\infty^{\text{st}}}(A)$)}
\label{corollary-definition-of-mod-st-A}
The composition of strict morphisms is again a strict morphism and thus define a subcategory
$
\operatorname{Mod_\infty^{\text{st}}}(A) \subseteq \operatorname{Mod_\infty}(A).
$ 
\end{cor}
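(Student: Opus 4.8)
The plan is to check the two subcategory axioms — stability under the composition of $\operatorname{Mod}_\infty(A)$ and the presence of identities — while tracking the three data that define strictness in Definition \ref{definition-of-strict-morphism}: degree zero, closedness, and linearity. The cleanest bookkeeping goes through the operator correspondence $\phi_\bullet \mapsto \Phi = \phi_\bullet \odot \mathbf{1}^\otimes$ of Proposition \ref{proposition-definition-of-mod-infinity-A}: a morphism is strict exactly when its operator has the special form $\Phi = \phi_1 \odot \mathbf{1}^\otimes$ with $\phi_1 \colon M \to M'$ linear, closed, and of degree zero.

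First I would dispose of degree and closedness, which cost nothing. Composition preserves degree, and the relation $[B,\Phi] = (\delta\phi_\bullet)\odot\mathbf{1}^\otimes$ from Proposition \ref{proposition-definition-of-mod-infinity-A} exhibits $[B,-]$ as a graded derivation for the operator product $\Theta = \Psi\Phi$; hence $[B,\Theta] = [B,\Psi]\Phi + \Psi[B,\Phi] = 0$ whenever $\phi_\bullet$ and $\psi_\bullet$ are closed of degree zero, and injectivity of $-\odot\mathbf{1}^\otimes$ gives $\delta\theta_\bullet = 0$. (This is just the standard fact that the closed degree-zero morphisms of a dg-category form an ordinary category.) For identities, the identity of $M$ is the linear map $\operatorname{id}_M$, whose operator is the identity $\operatorname{id}_M \odot \mathbf{1}^\otimes = \mathbf{1}$ on $M \odot (A[1])^\otimes$; this is manifestly closed, degree zero, and linear, hence strict.

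The one point with content is that linearity survives composition. Writing $\phi_\bullet = \phi_1$ and $\psi_\bullet = \psi_1$, their operators $\Phi = \phi_1 \odot \mathbf{1}^\otimes$ and $\Psi = \psi_1 \odot \mathbf{1}^\otimes$ each act as a linear map on the module slot while leaving the $(A[1])^\otimes$-tail untouched, so $\Theta = \Psi\Phi = (\psi_1\phi_1)\odot\mathbf{1}^\otimes$; by the correspondence this says $\theta_\bullet = \psi_1\phi_1$ has only its linear component, i.e. it is again strict. The same conclusion can be read straight off the composition formula $\theta_\bullet = \psi_\bullet(\phi_\bullet \odot \mathbf{1}^\otimes)$: the map $\phi_\bullet \odot \mathbf{1}^\otimes$ sends $m \odot a_1 \otimes \cdots \otimes a_k$ to $\phi_1(m) \odot a_1 \otimes \cdots \otimes a_k$, and since $\psi_\bullet = \psi_1$ is supported on the module-only summand the result vanishes unless $k = 0$, leaving $\theta_\bullet = \psi_1 \circ \phi_1$.

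I expect no conceptual obstacle; the only thing demanding care is the notational one of interpreting $-\odot\mathbf{1}^\otimes$ as the comodule extension along the tensor tail, so that the identity $(\psi_1\odot\mathbf{1}^\otimes)(\phi_1\odot\mathbf{1}^\otimes) = (\psi_1\phi_1)\odot\mathbf{1}^\otimes$ is read with the correct indexing and Koszul signs. This is precisely the term-wise bookkeeping already performed for the composition of geometric series in the proof of Proposition \ref{proposition-composition-of-algebra-morphisms}, specialized to the case where only the leading components survive, so no fresh cross-cancellation is needed.
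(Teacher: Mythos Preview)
Your proof is correct and is precisely the routine verification that the paper elects to omit entirely (the paper's proof reads ``Omitted.''). There is no alternative approach to compare against; you have simply written out the only natural argument, checking degree, closedness, and linearity directly from the composition formula $\theta_\bullet = \psi_\bullet(\phi_\bullet \odot \mathbf{1}^\otimes)$ of Proposition~\ref{proposition-definition-of-mod-infinity-A}.
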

\begin{proof}
Omitted.
\end{proof}


\subsection{The $\frak{m}$'s.}
\label{subsection:bar-construction}
The definitions in terms of $b_\bullet$ are equivalent to  a commonly used formulation of $\mathbf{A}_\infty$-algebras
in terms of a map
$$
\frak{m}_\bullet \colon A^\otimes \to A
$$
where $|\frak{m}_i| = i-2.$  Both descriptions appeared in Stasheff's original work on the subject
 \cite{stasheff-published-thesis}. The equivalence related to the bar resolution of   Eilenberg-Mac Lane \cite{eilenberg-maclane-bar-construction}.  
 For us, the main reason to use the formulation in terms of $\frak{m}_\bullet$ is that we will consider curved dg-algebras and their modules, and these objects are neatly described in this language.
 The essential fact is the following.
 \begin{prop} {\bf ($\frak{m}$-$b$-equivalence) }
 \label{proposition-bar-construction}
The assignment
$$
\frak{m}_i \mapsto -\sigma \circ  \frak{m}_i  \circ \omega^{\otimes i} = b_i
$$
identifies those $m_\bullet \in \operatorname{Hom}_S(A^\otimes, A)$ which satisfy
$$
0 = \sum_{j, k, \ell} (-1)^{jk+\ell} \frak{m}_i(\mathbf{1}^{\otimes j} \otimes \frak{m}_k \otimes \mathbf{1}^{\otimes \ell})
$$
with those $b_\bullet \in \operatorname{Hom}_S(A[1]^\otimes, A[1])$ which satisfy
$$
0 = b_\bullet(\mathbf{1}^\otimes \otimes b_\bullet \otimes \mathbf{1}^\otimes).
$$
\end{prop}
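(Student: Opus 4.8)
The plan is to observe first that, componentwise, the assignment $\frak{m}_i \mapsto b_i = -\sigma \circ \frak{m}_i \circ \omega^{\otimes i}$ is already a bijection: since $\sigma$ and $\omega$ are mutually inverse $S$-module isomorphisms, the formula inverts to $\frak{m}_i = -\omega \circ b_i \circ \sigma^{\otimes i}$, so there is nothing to prove about the correspondence of the maps themselves. All the content lies in matching the quadratic relation $b_\bullet(\mathbf{1}^\otimes \otimes b_\bullet \otimes \mathbf{1}^\otimes) = 0$ with its $\frak{m}$-counterpart. I would prove this one tensor-degree at a time: both relations are graded, so it suffices to compare their restrictions to each $(A[1])^{\otimes n}$.

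Next I would expand the left-hand operator on $(A[1])^{\otimes n}$ using the extension-by-zero convention. Applying the inner $b_\bullet$ to a consecutive block of $k$ inputs sitting after the first $j$ and before the last $\ell$ (with $j + k + \ell = n$) produces a tensor of length $i := j + 1 + \ell$, to which only the component $b_i$ of the outer $b_\bullet$ contributes. Thus
\[
b_\bullet(\mathbf{1}^\otimes \otimes b_\bullet \otimes \mathbf{1}^\otimes)\big|_{(A[1])^{\otimes n}}
= \sum_{j + k + \ell = n} b_i(\mathbf{1}^{\otimes j} \otimes b_k \otimes \mathbf{1}^{\otimes \ell}),
\qquad i = j + 1 + \ell,
\]
which already mirrors the index structure of the classical $\mathbf{A}_\infty$-equation.

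The heart of the argument is to substitute $b_i = -\sigma \frak{m}_i \omega^{\otimes i}$ and $b_k = -\sigma \frak{m}_k \omega^{\otimes k}$ into a single summand and simplify. Splitting $\omega^{\otimes i} = \omega^{\otimes j} \otimes \omega \otimes \omega^{\otimes \ell}$ along the three blocks, the middle $\omega$ meets the $\sigma$ that sits atop the inner $\frak{m}_k$, and $\omega \circ \sigma = \mathbf{1}$ collapses them. After moving the remaining $\omega$'s to the right through $\frak{m}_k$ and the outer $\sigma$ to the left, each summand takes the form
\[
b_i(\mathbf{1}^{\otimes j} \otimes b_k \otimes \mathbf{1}^{\otimes \ell})
= (\pm 1)\, \sigma \circ \frak{m}_i(\mathbf{1}^{\otimes j} \otimes \frak{m}_k \otimes \mathbf{1}^{\otimes \ell}) \circ \omega^{\otimes n}.
\]
Because $\sigma$ and $\omega^{\otimes n}$ are isomorphisms and are the same on every summand of fixed $n$, the full sum vanishes precisely when $\sum (\pm 1)\,\frak{m}_i(\mathbf{1}^{\otimes j} \otimes \frak{m}_k \otimes \mathbf{1}^{\otimes \ell})$ does, which is the asserted equivalence.

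The step I expect to be the real obstacle is confirming that the accumulated Koszul signs in that last display are exactly $(-1)^{jk+\ell}$, matching the classical equation, together with an overall sign that is identical for every triple $(j,k,\ell)$ and can therefore be cancelled. These signs come from three sources: the two minus signs built into the correspondence, the Koszul rule for commuting the shift isomorphisms $\omega$ past $\frak{m}_k$ and past the identity maps to its left, and the intrinsic degrees of $\sigma$ and $\omega$. Rather than grinding through the bookkeeping, I would verify the exponent $jk+\ell$ by the string-diagram crossing count recommended after the sign remark, drawing the $\omega$'s, $\sigma$'s and $\frak{m}$'s as strands and reading off the parity of crossings; this also makes the uniformity of the leftover global sign transparent.
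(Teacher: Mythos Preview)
Your proposal is correct and follows the standard route for this well-known equivalence: the bijection on maps is immediate from the invertibility of $\sigma$ and $\omega$, and the matching of the quadratic relations reduces, after substituting and cancelling the middle $\omega\sigma$, to a sign computation yielding $(-1)^{jk+\ell}$ times a uniform global sign. Your suggestion to verify the exponent via the string-diagram crossing count is exactly in keeping with the paper's stated sign conventions.

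As for comparison with the paper: there is nothing to compare against, since the paper simply writes ``Omitted'' for this proof. The result is classical (going back to Stasheff), and the authors evidently regard it as background. Your write-up is a perfectly serviceable proof and fills in what the paper leaves out; the only place a referee might push back is your final paragraph, where you defer the actual sign verification to a diagram rather than carrying it out --- but given the paper's own attitude toward signs (see the remark preceding Figure~1), this is consistent with the ambient style.
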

\begin{proof}
Omitted.
\end{proof}


\subsection{Curved dg-algebras and their modules.}

The relevance of curved dg-algebras to this paper
comes from the adjoint algebra $U_e(A)$ (Proposition \ref{proposition-defining-UeA}).
There is a slight difference in what is meant by a \emph{morphism} of curved dg-algebras
depending on whether the are considered for their own sake or as special cases of $\mathbf{A}_\infty$-algebras (those with $b_{\geq 3} = 0$).
We consider them as special cases of $\mathbf{A}_\infty$-algebras, and thus get a category we denote by 
$\mathbf{Alg}_\text{dg}^\ast$.  The alternative view produces a category  $\mathbf{Alg}_\text{dg}$
and $\mathbf{Alg}_\text{dg}^\ast$ is a subcategory.  

A morphism in 
$\mathbf{Alg}_\text{dg}$
 includes the data of an element $\mathfrc{b}' \in (\mathcal{A}')^1$ and
modifies the last two equations of Proposition \ref{prop:alg_dg-ast} in a way equivalent to allowing an $f_0$ term  in the $\mathbf{A}_\infty$-morphism \cite{positselski}.  This larger category does not include into 
 $\mathbf{Alg}_\infty$;  the problem being that composition with an $f_0$  term  can lead to an infinite sum for which convergence addressed.  In the geometric picture of 
 Kontsevich-Soibelman \cite{kontsevich-soibelman}, the  morphisms in $\mathbf{Alg}_\text{dg}^\ast$
 are the basepoint preserving ones.

\begin{defn} {\bf (curved dg-algebra)}  
A curved dg-algebra $(\mathcal{A}, d, c)$ is an $\mathbf{A}_\infty$-algebra
$$
\frak{m}_\bullet \colon \mathcal{A}^\diamond \to \mathcal{A}
$$
with $\frak{m}_\ell = 0$ for all $\ell \geq 3.$  Notationally, we write
\begin{itemize}
\item multiplication by  $a \cdot a' = \frak{m}_2(a \diamond a')$ (or concatenation),
\item the derivation\footnote{The derivation is called the ``differential'' despite the fact that it may not satisfy $d^2 = 0$.} by $da = \frak{m}_1(a)$, and 
\item the curvature by $c = m_0(1)$,
\end{itemize}
and the $\mathbf{A}_\infty$-equations are exactly the conditions that
\begin{itemize}
\item $dc = 0$,
\item $d^2a = [c,a] = ca - ac$,
\item $d(a \cdot a') = (da) \cdot a' + (-1)^a a \cdot (da'),$ and
\item $(a \cdot a') \cdot a'' = a \cdot (a' \cdot a'')$.
\end{itemize}
In addition, a strict unit $\eta$  defines an identity element $e  = \omega(\eta) \in \mathcal{A}$ which is closed
$$
de = 0.
$$
\end{defn}

\begin{prop}{\bf ($\mathbf{Alg}_\text{dg}^\ast$)} 
\label{prop:alg_dg-ast}
The category $\mathbf{Alg}_\text{dg}^\ast$ of strictly unital curved dg-algebras 
is made up of degree 0 morphisms of graded algebras
$$
\mathfrc{f} \colon \mathcal{A} \to \mathcal{A}'
$$
for which $\mathfrc{f}(e) = e'$, 
\begin{itemize}
\item $\mathfrc{f}(a \cdot a')  = \mathfrc{f}(a) \cdot  \mathfrc{f}(a')$,
\item $d' \mathfrc{f}(a) = \mathfrc{f}(da)$, and  
\item $\mathfrc{f}(c) = c'$ 
\end{itemize}
The identification 
$f_1  = \sigma \circ \mathfrc{f} \circ  \omega$ 
defines a
``linear'' morphism
$$f_\bullet = 
 f_1 \colon  
 \mathcal{A}[1]  \to \mathcal{A}'[1] \in \mathbf{Alg}_\infty(\mathcal{A}, \mathcal{A}')$$
for which the bulleted equations  become exactly the condition  $B'F-FB= 0$, and thus 
$\mathbf{Alg}_\text{dg}^\ast$ includes as a subcategory of $\mathbf{Alg}_\infty.$
\end{prop}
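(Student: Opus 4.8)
The plan is to reduce the single intertwining equation $B'F - FB = 0$ to a statement about the cogenerators $\mathcal{A}'[1]$ and then read it off one tensor degree at a time. Since the candidate morphism is linear, $f_\bullet = f_1$, and hence $F = (f_1)^\otimes$ preserves tensor length, acting as $f_1^{\otimes n}$ on $(\mathcal{A}[1])^{\otimes n}$. First I would observe that $G := B'F - FB$ is an $(F,F)$-coderivation of the tensor coalgebra: combining the codifferential property $\Delta' B' = (B' \smallboxtimes \mathbf{1}^\otimes + \mathbf{1}^\otimes \smallboxtimes B')\Delta'$, the comultiplicativity $\Delta' F = (F \smallboxtimes F)\Delta$, and the matching identity for $B$, one gets $\Delta' G = (G \smallboxtimes F + F \smallboxtimes G)\Delta$. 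Because the comultiplication is cofree, such a coderivation is determined by its corestriction $\pi G$ to $\mathcal{A}'[1]$, where $\pi$ denotes the length-one projection; thus $G = 0$ if and only if $\pi G = 0$.

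Next I would compute $\pi G$ on each $(\mathcal{A}[1])^{\otimes n}$. Writing $B' = \mathbf{1}^\otimes \otimes b'_\bullet \otimes \mathbf{1}^\otimes$ and projecting the output to length one forces the contracted block to be the whole tensor, so $\pi B' F|_{(\mathcal{A}[1])^{\otimes n}} = b'_n \circ f_1^{\otimes n}$ and likewise $\pi F B|_{(\mathcal{A}[1])^{\otimes n}} = f_1 \circ b_n$ (using $\pi F = f_1 \circ \pi$, again because $F$ is length-preserving). For a curved dg-algebra $b_{\geq 3} = b'_{\geq 3} = 0$, so $\pi G$ vanishes automatically for $n \geq 3$ and the equation $\pi G = 0$ collapses to the three equations $b'_0 = f_1 b_0$, $b'_1 f_1 = f_1 b_1$, and $b'_2 f_1^{\otimes 2} = f_1 b_2$.

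The third step is to translate these back through the $\frak{m}$-$b$-equivalence (Proposition \ref{proposition-bar-construction}), using $b_i = -\sigma \circ \frak{m}_i \circ \omega^{\otimes i}$ together with $f_1 = \sigma \circ \mathfrc{f} \circ \omega$ and $\omega\sigma = \mathbf{1}$. Cancelling the shifts turns the $n=0$ equation into $\mathfrc{f}(c) = c'$, the $n=1$ equation into $d'\mathfrc{f} = \mathfrc{f} d$, and the $n=2$ equation into $\mathfrc{f}(a\cdot a') = \mathfrc{f}(a)\cdot\mathfrc{f}(a')$, which are exactly the three bulleted conditions. The remaining data of an $\mathbf{A}_\infty$-morphism is the strict-unit conditions: $f_\ell(\mathbf{1}^\otimes \otimes \eta \otimes \mathbf{1}^\otimes)=0$ for $\ell \neq 1$ holds vacuously since $f_{\neq 1} = 0$, while $f_1(\eta) = \eta'$ is precisely $\sigma\mathfrc{f}\omega\sigma(e) = \sigma\mathfrc{f}(e) = \sigma(e') = \eta'$, i.e.\ the hypothesis $\mathfrc{f}(e) = e'$.

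I expect the only real friction to be the bookkeeping of Koszul signs, concentrated in the $n=2$ case, where the two tensor factors of $f_1$ must be slid past $\omega^{\otimes 2}$ and $\frak{m}_2$; the structural skeleton above is sign-free, so this is the one place I would draw the string diagram and count crossings. Finally, to see that this identification is functorial and hence exhibits $\mathbf{Alg}_\text{dg}^\ast$ as a subcategory of $\mathbf{Alg}_\infty$, I would note that the composite of linear morphisms $g_\bullet \circ f_\bullet = g_\bullet((f_\bullet)^\otimes)$ reduces to the ordinary composite $g_1 \circ f_1$, which under $f_1 = \sigma\mathfrc{f}\omega$ corresponds to $\mathfrc{g}\circ\mathfrc{f}$, and that the identity algebra map corresponds to the identity $\mathbf{A}_\infty$-morphism.
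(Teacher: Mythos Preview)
Your proof is correct and complete; the paper in fact omits the proof entirely, so you have supplied what the authors leave as an exercise. Your reduction via the corestriction $\pi G$ of the $(F,F)$-coderivation $G = B'F - FB$ is the natural way to unpack $B'F = FB$ degree by degree, and your handling of the strict-unit and functoriality checks is exactly what is needed.
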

\begin{proof}
Omitted.
\end{proof}
 
\begin{prop} {\bf (dg-module over a curved dg-algebra)}
A unital right dg-module $(N, d)$ over a curved dg-algebra $\mathcal{A}$ is a  unital right $\mathbf{A}_\infty$-module 
$$
\frak{m}_\bullet^{N} \colon N \odot \mathcal{A}^\diamond \to N
$$
with  $\frak{m}^N_\ell = 0$ for all $\ell \geq 3.$ 
Notationally, we write
\begin{itemize}
\item multiplication by  $n \cdot a = \frak{m}^N_2(n \odot a)$, and
\item the derivation by $dn = \frak{m}^N_1(n)$, 
\end{itemize}
and the $\mathbf{A}_\infty$-equations are exactly the conditions that
\begin{itemize}
\item $d^2n =  n \cdot c$,
\item $d(n \cdot a) = (dn) \cdot a + (-1)^n n \cdot (da),$ and
\item $(n \cdot a) \cdot a' = n \cdot (a \cdot a')$.
\end{itemize}
\end{prop}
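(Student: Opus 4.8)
The plan is to mirror the derivation of the algebra equations recorded in the curved dg-algebra Definition, working first in the $b$-picture --- where the structure relation is a single clean identity --- and only afterwards translating through the shift isomorphisms. First I would fix the dictionary between the two formulations. Exactly as in the $\frak{m}$-$b$-equivalence (Proposition \ref{proposition-bar-construction}), precomposing with copies of $\omega$ on the algebra slots (no shift is needed on the module slot, since $N$ carries none) identifies a family $\frak{m}^N_\bullet$ obeying the classical module relations with a structure map $b^M_\bullet$ of the form demanded in Definition \ref{definition-of-unital-right-module}. Under this dictionary the hypothesis $\frak{m}^N_\ell = 0$ for $\ell \geq 3$ says precisely that every component of $b^M_\bullet$ reading two or more algebra elements vanishes, so that on the module side only the differential $\frak{m}^N_1 = d$ and the bilinear multiplication $\frak{m}^N_2$ survive, while $\mathcal{A}$ itself contributes only its curvature $\frak{m}_0 = c$, its differential $\frak{m}_1 = d$, and its product $\frak{m}_2$ (the content of the curved dg-algebra Definition together with Proposition \ref{prop:alg_dg-ast}).

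Next I would unwind the defining relation. By the module form of the cross-cancel argument --- the Proposition on $\mathbf{A}_\infty$-module equations for $b^M_\bullet$ --- the condition $(B^M)^2 = 0$ is equivalent to
$$
b^M_\bullet\bigl(b^M_\bullet \odot \mathbf{1}^\otimes + \mathbf{1} \odot \mathbf{1}^\otimes \otimes b_\bullet \otimes \mathbf{1}^\otimes\bigr) = 0.
$$
I would then split this one identity according to the number of algebra elements it reads. Because the surviving components of $b^M_\bullet$ read at most one algebra element and $\mathcal{A}$ is a genuine curved dg-algebra ($\frak{m}_{\geq 3} = 0$), every summand that reads three or more algebra elements is automatically zero, and only three cases carry content. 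Reading no algebra element, the surviving terms are $d\circ d$ and the multiplication applied to the curvature, yielding $d^2 n = n\cdot c$. Reading one algebra element, the three surviving terms assemble into the graded Leibniz rule $d(n\cdot a) = (dn)\cdot a + (-1)^n n\cdot(da)$. Reading two algebra elements, the two surviving terms give associativity $(n\cdot a)\cdot a' = n\cdot(a\cdot a')$. Conversely these three identities force every graded piece of the relation to vanish, so the correspondence is exact and each direction of the proposition follows.

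The genuine work is the sign bookkeeping incurred when the factors are carried back across $\sigma$ and $\omega$. Each such transfer contributes a Koszul sign, and it is exactly the signs accumulated on the module strand that produce the factor $(-1)^n$ in the Leibniz rule while leaving the curvature and associativity identities free of signs. Following the Remark on signs, I would confirm these by drawing, for each of the three arities, the string diagram interpolating the two sides and counting crossings, rather than tracking exponents symbolically; the computation is formally the same as the one behind the algebra equations in the curved dg-algebra Definition, with the leftmost strand now carrying the module element $n$ in place of an algebra element. I expect the arity-one identity to be the only delicate point, as it is where the nontrivial Koszul sign $(-1)^n$ must be produced; the arity-zero and arity-two identities carry no such sign and drop out immediately.
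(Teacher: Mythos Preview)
Your approach is correct and is precisely the routine unwinding the paper has in mind; the paper simply records ``Omitted'' for this proof, since it is the module-side analogue of the computation already spelled out in the curved dg-algebra Definition. Your arity-by-arity decomposition and appeal to the string-diagram sign check match the paper's conventions exactly.
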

\begin{proof}
Omitted.
\end{proof}

\begin{defn}{\bf ($\operatorname{Mod}_\text{dg}(\mathcal{A})$)} 
The category
$\operatorname{Mod}_\text{dg}(\mathcal{A})$ 
is a dg-category 
made up of unital right dg-modules over $\mathcal{A}$ whose morphism complexes are 
$$
\operatorname{Mod}_\text{dg}(\mathcal{A})(N, N') = 
\operatorname{Hom}_{\mathcal{A}}(N, N')
$$
of $\mathcal{A}$-linear maps with the differential
$$
[d, \varphi] = d' \circ \varphi- (-1)^{\varphi} \ \varphi \circ d. 
$$
Setting $\phi_1 = -\omega \circ \varphi \circ \sigma$ defines a
``linear'' morphism
$$\phi_\bullet = \phi_1 \colon N[1]  \to N'[1] \in \operatorname{Mod_\infty}(A)(N[1])(N'[1])$$
for which 
$$
\delta \phi_\bullet = -\omega \circ [d, \varphi] \circ \sigma
$$
and thus 
$\operatorname{Mod}_\text{dg}(\mathcal{A})$ includes as a subcategory of $\operatorname{Mod}_\infty(\mathcal{A}).$
\end{defn}

\begin{defn}{\bf ($\operatorname{Mod}^{\text{st}}_\text{dg}(\mathcal{A})$)} 
The category
$\operatorname{Mod}^{\text{st}}_\text{dg}(\mathcal{A})$ of strict morphisms
is the subcategory $Z^0(\operatorname{Mod}_\text{dg}(\mathcal{A})) \subseteq \operatorname{Mod}_\text{dg}(\mathcal{A})$.
The identity $[d, \varphi] = 0$ is equivalent to the conditions that 
\begin{itemize}
\item $\varphi(m \cdot a)  = \varphi(m) \cdot  a$, and
\item $d' \varphi(m) = \varphi(dm)$.
\end{itemize}
\end{defn}


\subsection{Bimodules.}

An $A$-$A'$-bimodule $V$ for curved $\mathbf{A}_\infty$-algebras $A$ and $A'$ defines a functor 
$$
  - \overset{\infty}{\otimes} V \colon 
   \operatorname{Mod}_\infty(A)  
\to
  \operatorname{Mod}_\infty(A').
 $$
The functor $Q$ (Theorem \ref{theorem-definition-of-Q}) which produces the dg-adjunction between $A$ modules and $U_e(A)$ modules is defined this way.  Here we recall the 
definition of a bimodule and the resulting functor.

\begin{defn}{(\bf $\infty$-bimodule)}
 \label{definition-infinity-bimodule}
 An $A$-$A'$-bimodule $V$ for curved $\mathbf{A}_\infty$-algebras $A$ and $A'$
 is an $S$-module and a map
 $$
 b^V_{\bullet,  \bullet}\colon A[1]^\otimes \odot V \odot (A')[1]^\otimes \to V
 $$
 such that 
 $$
(B^V)^2 = 0
 $$
 for 
 $$
 B^V= B \odot \mathbf{1} \odot \mathbf{1}^\otimes + \mathbf{1}^\otimes \odot b^V_{\bullet, \bullet} \odot \mathbf{1}^\otimes + \mathbf{1}^\otimes \odot \mathbf{1} \odot B'.
 $$
 The $\mathbf{A}_\infty$-equation for such a map is
$$
b^V_{\bullet, \bullet}(B \odot \mathbf{1} \odot \mathbf{1}^\otimes + \mathbf{1}^\otimes \odot b^V_{\bullet, \bullet} \odot \mathbf{1}^\otimes + \mathbf{1}^\otimes \odot\mathbf{1} \odot B') = 0.
$$

If $A$ and $A'$ are strictly unital, then $V$ is called strictly unital if  
$$b^V_{\bullet,  \bullet}(\sigma(a_1) \otimes \dotsm \otimes \sigma(a_k) \odot v \odot \sigma(a'_1) \otimes \dotsm \otimes \sigma(a'_{k'})) = 0$$
when $k + 1+ k' > 2$ and any $a_i = e$ or $a'_{i'} = e'$.
\end{defn}

\begin{prop}
{\bf (infinity-tensor)}
 \label{proposition-infinity-tensor}
 An $A$-$A'$-bimodule $V$ defines a dg-functor
 $$
  - \overset{\infty}{\otimes} V \colon 
   \operatorname{Mod}_\infty(A)  
\to
  \operatorname{Mod}_\infty(A') 
 $$
 which sends
 $$
 M \mapsto M \overset{\infty}{\otimes} V  = M \odot A[1]^\otimes \odot V 
 $$
 with 
 $$
 b^{M \overset{\infty}{\otimes} V}
 \colon M \odot A[1]^\otimes \odot V \odot A'[1]^\otimes \to M \odot A[1]^\otimes \odot V
 $$
 equal to 
 $$B^M \odot\mathbf{1}  
 + \mathbf{1} \odot \mathbf{1}^\otimes \odot b^V_{\bullet, \bullet},$$
 and 
 $\phi_\bullet \colon M \to M'$ to the linear morphism 
 $$
( \phi_\bullet \odot \mathbf{1}^\otimes \odot \mathbf{1} )
 \colon M \odot A[1]^\otimes \odot V  \longrightarrow M' \odot A[1]^\otimes \odot V.
 $$
\end{prop}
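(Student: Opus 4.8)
The plan is to check three things in order: that the displayed formula for $b^{M \overset{\infty}{\otimes} V}$ really makes $M \overset{\infty}{\otimes} V$ a \emph{strictly unital} object of $\operatorname{Mod}_\infty(A')$; that $\phi_\bullet \mapsto \phi_\bullet \odot \mathbf{1}^\otimes \odot \mathbf{1}$ is a degree-preserving chain map of Hom-complexes; and that this assignment respects composition and identities. Throughout I would suppress signs as the paper recommends and verify them separately by counting crossings. For the object statement the cleanest bookkeeping is to observe that the full $A'$-module codifferential $B^{M \overset{\infty}{\otimes} V} = b^{M \overset{\infty}{\otimes} V}_\bullet \odot \mathbf{1}^\otimes + \mathbf{1} \odot \mathbf{1}^\otimes \otimes b'_\bullet \otimes \mathbf{1}^\otimes$ on $M \odot A[1]^\otimes \odot V \odot A'[1]^\otimes$ decomposes as
$$
B^{M \overset{\infty}{\otimes} V} \;=\; X \;+\; \mathbf{1}_M \odot B^V, \qquad X = b^M_\bullet \odot \mathbf{1}^\otimes \odot \mathbf{1} \odot \mathbf{1}^\otimes,
$$
where $X$ is the part of $B^M$ that absorbs $A$-inputs into $M$, and $\mathbf{1}_M \odot B^V$ is $\mathbf{1}_M$ tensored with the bimodule codifferential of Definition \ref{definition-infinity-bimodule}; the key point justifying this grouping is that the internal $A$-multiplication summand of $B^M$ coincides with the $B \odot \mathbf{1} \odot \mathbf{1}^\otimes$ summand of $B^V$, so it is counted exactly once. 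Then $(B^{M \overset{\infty}{\otimes} V})^2 = X^2 + [X,\, \mathbf{1}_M \odot B^V] + (\mathbf{1}_M \odot B^V)^2$, and I would kill the three pieces separately: $(\mathbf{1}_M \odot B^V)^2 = \mathbf{1}_M \odot (B^V)^2 = 0$ because $V$ is a bimodule; $X^2$ together with the bracket of $X$ against the internal $A$-multiplication summand of $B^V$ reassembles $(B^M)^2 = 0$ (the leftover $\mathbf{1}_M \odot B^2$ is harmless since the $A$-codifferential squares to zero); and the brackets of $X$ against the $b^V_{\bullet,\bullet}$ and $B'$ summands vanish by the cross-cancel argument of Proposition \ref{A-infinity-equations-on-b}, since $X$ touches only an initial block of $A$-inputs while those summands act at the $V$/$A'$ end. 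The module analogue of Proposition \ref{A-infinity-equations-on-b} then packages this into the single action equation. Strict unitality of $M \overset{\infty}{\otimes} V$ is inherited from $V$: a strict unit $\eta'$ can only enter $b^{M \overset{\infty}{\otimes} V}_\bullet$ through the rightmost $b^V_{\bullet,\bullet}$ slot, so the higher terms vanish by the strict unitality clause of Definition \ref{definition-infinity-bimodule} and the $b_2$-normalization $-b^{M \overset{\infty}{\otimes} V}_2(\mathbf{1}\odot\eta') = \mathbf{1}$ is exactly the right-unit action of $V$.

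For the chain-map statement, write $\Phi = \phi_\bullet \odot \mathbf{1}^\otimes$ and view the image morphism as the operator $\tilde{\Phi} = \Phi \odot \mathbf{1} \odot \mathbf{1}^\otimes$, which is visibly a degree-$|\phi_\bullet|$ element of $\operatorname{Mod}_\infty(A')(M \overset{\infty}{\otimes} V, M' \overset{\infty}{\otimes} V)$. Using the same decomposition, $[B^{M \overset{\infty}{\otimes} V}, \tilde{\Phi}]$ splits into $[B^M \odot \mathbf{1} \odot \mathbf{1}^\otimes,\, \tilde{\Phi}]$ plus brackets of $\tilde{\Phi}$ against the $b^V_{\bullet,\bullet}$ and $B'$ summands; the latter vanish by cross-cancel since $\Phi$ touches only the $M \odot A[1]^\otimes$ block, while the former reduces by Proposition \ref{proposition-definition-of-mod-infinity-A} to $([B^M,\Phi]) \odot \mathbf{1} \odot \mathbf{1}^\otimes = ((\delta\phi_\bullet)\odot\mathbf{1}^\otimes)\odot\mathbf{1}\odot\mathbf{1}^\otimes$, which is precisely the image of $\delta\phi_\bullet$. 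Hence the assignment commutes with $\delta$ and therefore sends closed morphisms to closed morphisms. Functoriality then follows formally from the relation $\Theta = \Psi\Phi$ of Proposition \ref{proposition-definition-of-mod-infinity-A}: tensoring it on the right with $\mathbf{1}_V$ and noting that the images are linear in $A'$ (so their $\operatorname{Mod}_\infty(A')$-composite is computed entirely on the $M \odot A[1]^\otimes \odot V$ block) shows $((\psi_\bullet\circ\phi_\bullet)\odot\mathbf{1}^\otimes)\odot\mathbf{1}$ equals the composite of the two images, and the identity of $M$ maps to $\mathbf{1}_{M \overset{\infty}{\otimes} V}$.

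The main obstacle is the object statement: correctly matching the ``shared'' internal $A$-multiplication between the module codifferential $B^M$ and the bimodule codifferential $B^V$ so that it is neither dropped nor double-counted, and confirming that the cross-terms between the left $M$-action and the right bimodule/$A'$-action cancel with the correct Koszul signs. This sign discipline is exactly the sort the paper proposes to settle by drawing string diagrams and counting crossings rather than by direct expansion.
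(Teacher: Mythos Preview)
Your proof is correct and matches the paper's approach. Your decomposition $B^{M \overset{\infty}{\otimes} V} = X + \mathbf{1}_M \odot B^V$ is exactly what the paper means by ``organize terms by whether or not an element of $M$ passes through a $B$ map'': the terms with at least one $X$ assemble to $b^M_\bullet(B^M)\odot\mathbf{1}^\otimes\odot\mathbf{1}\odot\mathbf{1}^\otimes$, the terms with none give $\mathbf{1}_M\odot(B^V)^2$, and the morphism computation is likewise identical (the paper displays the same cancellation of the $\phi_\bullet\odot B^V$ cross-terms that you attribute to cross-cancel). You are simply more explicit, and you additionally check strict unitality and functoriality, which the paper leaves implicit.
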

\begin{proof}
The expression to check for objects simplifies using the cross-cancel argument and $B^2 = 0.$
The remaining terms can be organized by whether or not an element of $M$ passes through a $B$
map:
$$
\begin{array}{rcl}
(B^{M \overset{\infty}{\otimes} V})^2 & = & (B^M \odot \mathbf{1} \odot \mathbf{1}^\otimes  
 + \mathbf{1} \odot \mathbf{1}^\otimes \odot b^V_{\bullet, \bullet} \odot \mathbf{1}^\otimes + 
  \mathbf{1} \odot \mathbf{1}^\otimes \odot \mathbf{1} \odot B' )^2
\\
& = & b^M_\bullet(B^M) \odot 
 \mathbf{1}^\otimes \odot 
\mathbf{1} \odot \mathbf{1}^\otimes  + \mathbf{1} \odot (B^V)^2 \\
& = & 0.\\
\end{array}
$$
The last equality, uses the $\mathbf{A}_\infty$-equations for $M$:  $b^M_\bullet(B^M)  = 0$.

On morphisms, 
$$
\begin{array}{lll}
B^{M \overset{\infty}{\otimes} V} 
 \circ  & 
 \phi_\bullet \odot \mathbf{1}^\otimes \odot \mathbf{1} \odot \mathbf{1}^\otimes & 
\\
- (-1)^{\phi} & 
\phi_\bullet \odot \mathbf{1}^\otimes \odot \mathbf{1} \odot \mathbf{1}^\otimes
&  \circ 
B^{M \overset{\infty}{\otimes} V} 
\end{array}
\ \ = \ \ 
\begin{array} {l}
b^M_\bullet  (\phi_\bullet \odot \mathbf{1}^\otimes ) \odot \mathbf{1}^\otimes \odot \mathbf{1} \odot \mathbf{1}^\otimes
+  (-1)^\phi \cancel{\phi_\bullet \odot  B^V} \\
  - (-1)^\phi [ \phi_\bullet (B_M) \odot \mathbf{1}^\otimes \odot \mathbf{1} \odot \mathbf{1}^\otimes
+ \cancel{\phi_\bullet \odot B^V}
 ]
\end{array}
$$
$= (\delta\phi_\bullet) \odot  \mathbf{1}^\otimes \odot \mathbf{1} \odot \mathbf{1}^\otimes.$
\end{proof}


\section{The adjoint algebra.}
\label{section-adjoint-algebra}

The adjoint algebra $U_e(A)$ (Proposition \ref{proposition-defining-UeA}) is used largely for the same purposes as the algebra $\mathbb{U}^+(A)$ used by Lef\`evre-Hasegawa \cite{lefevre-hasegawa}.  Like $\mathbb{U}^+(A)$, the algebra $U_e(A)$ is universal in the appropriate sense (Theorem \ref{theorem-universal-property-of-U_e(A)}), and
$U_e(A)$-modules reproduce the category 
$\operatorname{Mod}^\text{st}_\infty(A)$ (Theorem \ref{theorem-isomorphism-of-A-and-UeA-modules}). 

However, $U_e(A)$ is not obtained in the usual way as a ``bar-cobar''.
Superficially it looks similar, but it is sneakily different.  It is closer to think of it as first doing a lunatic version of bar-cobar in which one uses the non-reduced tensor coalgebra equipped with the reduced comultiplication.  This is then quotiented by an ideal the guarantees things behave appropriately with the unit.


\subsection{$U_e(A)$ definitions.}

$U_e(A)$ is a quotient of an algebra $U(A)$ and most of our constructions are done on the level of $U(A).$
However, they descend to  $U_e(A)$ and thus allow access to  
 $\operatorname{Mod}^\text{st}_\infty(A)$.

\begin{defn}{\bf ($U(A)$)}
The enveloping algebra
$U(A)$ is a curved dg-algebra obtained by setting
$X = ((A[1])^{\otimes \geq 1})[-1]$ and 
$$U(A) =  X^{\smallboxtimes}.$$
It is equipped with the derivation
$$
d = d_1 + \overline{d}_2
$$
whose value on generators is $D = D_1 + \overline{D}_2$ for
\begin{itemize}
\item $D_1 = -\omega \circ B \circ \sigma,$ and
\item $\overline{D}_2 = -\omega^{{\smallboxtimes} 2} \circ \overline{\Delta} \circ \sigma$
\end{itemize}
where $\overline{\Delta}$ is the comultiplication on the reduced tensor coalgebra  $(A[1])^{\otimes \geq 1}$.
This means
$$
\overline{\Delta}(\sigma(a_1) \otimes \dotsm \otimes \sigma(a_\ell)) = 
\sum_{i = 1}^{\ell-1} \sigma(a_1) \otimes \dotsm \otimes \sigma(a_i) {\smallboxtimes}
\sigma(a_{i+1}) \otimes \dotsm \otimes \sigma(a_\ell).
$$
Equivalently, $\overline{\Delta} = \Delta - \mathbf{1}^\otimes {\smallboxtimes} 1_\otimes - 1_\otimes {\smallboxtimes}\mathbf{1}^\otimes$
were $\Delta$ is the  comultiplication on the full tensor coalgebra  $(A[1])^{\otimes}$.
\end{defn}

\begin{prop}{\bf ($U(A)$ verification)}
$(U(A), d, c)$ is a curved dg-algebra with $c = \frak{m}_0(1)$ and
 identity element $1 \in  X^{\otimes 0} \cong S$.
\end{prop}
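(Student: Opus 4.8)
The plan is to exploit that $U(A) = X^{\smallboxtimes}$ is the free graded associative (tensor) algebra on the generating module $X$, so that three of the four curved dg-algebra axioms hold for structural reasons. Multiplication $\frak{m}_2$ is concatenation of $\smallboxtimes$-words, which is associative on the nose, giving the associativity axiom. By construction $d = d_1 + \overline{d}_2$ is the unique derivation of $X^{\smallboxtimes}$ extending $D = D_1 + \overline{D}_2$ on generators, so the Leibniz axiom $d(a\cdot a') = (da)\cdot a' + (-1)^a a\cdot(da')$ holds automatically; I only need to note that $D$ has degree $1$ and that $D_1$ preserves $X$ (since $B$ maps $(A[1])^{\otimes \geq 1}$ into itself) while $\overline{D}_2$ lands in $X\smallboxtimes X$, so $d$ is a well-defined degree-$1$ derivation. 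The unit is $1\in X^{\smallboxtimes 0}\cong S$: it is the multiplicative identity of the tensor algebra, the sequence $0\to S\to U(A)\to U(A)/S\to 0$ splits since $X^{\smallboxtimes 0}\cong S$ is a direct summand, and $d1 = 0$ because any derivation kills the unit ($d1 = d(1\cdot 1) = 2\,d1$ forces $d1 = 0$). Thus the real content is the curvature identity $d^2 = [c,-]$ and the Bianchi identity $dc = 0$, with $c = \frak{m}_0(1)$.

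For the curvature identity, observe that $d^2 = \tfrac12[d,d]$ is an even derivation and $[c,-]$ is the inner derivation attached to $c$; since both are derivations of the free algebra $X^{\smallboxtimes}$, it suffices to check $d^2 = [c,-]$ on generators $x \in X$. Decompose
$$
d^2 = d_1^2 + \bigl(d_1\overline{d}_2 + \overline{d}_2 d_1\bigr) + \overline{d}_2^2 .
$$
On a single generator $d_1 = D_1$, so $d_1^2|_X = D_1^2 = \omega B^2 \sigma = 0$, using $\sigma\omega = \mathbf{1}$ and the master equation $B^2 = 0$ (Proposition \ref{A-infinity-equations-on-b}), where the invariance of $(A[1])^{\otimes \geq 1} = X[1]$ under $B$ makes the restriction legitimate. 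Likewise $\overline{d}_2^2|_X$ is governed by $(\overline{\Delta}\smallboxtimes\mathbf{1}^\otimes)\overline{\Delta}$ against $(\mathbf{1}^\otimes\smallboxtimes\overline{\Delta})\overline{\Delta}$, and vanishes by coassociativity of the reduced comultiplication (the two iterated coproducts cancel with the Koszul sign produced by passing $\overline{D}_2$ across a generator). Hence $d^2|_X$ equals the cross term, which I must identify with $[c,-]$.

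The cross term is computed from the codifferential--comultiplication compatibility $\Delta B = (B\smallboxtimes\mathbf{1}^\otimes + \mathbf{1}^\otimes\smallboxtimes B)\Delta$ together with $\overline{\Delta} = \Delta - \mathbf{1}^\otimes\smallboxtimes 1_\otimes - 1_\otimes\smallboxtimes\mathbf{1}^\otimes$. Substituting the second relation into the operator $\overline{\Delta}B - (B\smallboxtimes\mathbf{1}^\otimes + \mathbf{1}^\otimes\smallboxtimes B)\overline{\Delta}$ that controls $d_1\overline{d}_2 + \overline{d}_2 d_1$, the ``full'' compatibility cancels and leaves precisely the boundary corrections in which $B$ meets an empty tensor factor, namely the curvature insertions $B(1_\otimes) = b_0(1_\otimes)$ appended on the left or on the right. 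After applying $\omega^{\smallboxtimes 2}$ and $\sigma$ these become left and right multiplication by $c := \frak{m}_0(1) = -\omega(b_0(1_\otimes)) \in X$, so the cross term is $c\smallboxtimes x - x\smallboxtimes c = [c,x]$ (an ordinary commutator, since $|c| = 2$ is even). This gives $d^2 = [c,-]$.

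Finally, for $dc = 0$: because $b_0(1_\otimes)$ is a length-one tensor, $\overline{\Delta}(b_0(1_\otimes)) = 0$, so $\overline{D}_2 c = 0$ and $dc = D_1 c = \omega B\bigl(b_0(1_\otimes)\bigr)$. On the single element $b_0(1_\otimes)$ the two $b_0$-insertion terms $b_0(1_\otimes)\otimes b_0(1_\otimes)$ occur with opposite Koszul signs and cancel, while the surviving term is $b_1 b_0(1_\otimes)$, which vanishes as the length-one component of $b_\bullet(B) = 0$; hence $dc = 0$. The strict-unit conditions for $1$ are then immediate from its being the algebra identity together with $d1 = 0$. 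The main obstacle throughout is the sign bookkeeping for the boundary terms produced by replacing $\Delta$ with the reduced $\overline{\Delta}$ --- deciding which insertions survive to build $c$ and which cancel --- which I would verify by the crossing-count method recommended in the text.
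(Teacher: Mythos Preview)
Your proposal is correct and follows essentially the same route as the paper: both reduce to generators (you by noting $d^2$ is a derivation, the paper via its cross-cancel expansion of $(\mathbf{1}^{\smallboxtimes}\smallboxtimes D\smallboxtimes\mathbf{1}^{\smallboxtimes})^2$), then dispatch $D_1^2$ by $B^2=0$, kill the $\overline{D}_2$-squared piece by coassociativity of $\overline{\Delta}$, and extract $[c,-]$ from the cross term via the substitution $\overline{\Delta} = \Delta - \mathbf{1}^\otimes\smallboxtimes 1_\otimes - 1_\otimes\smallboxtimes\mathbf{1}^\otimes$ together with the coderivation identity for $B$. Your explicit verification of $dc=0$ is a welcome addition that the paper leaves implicit.
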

\begin{proof}
$U(A)$ is tensor algebra, and one can always extend a map $D \colon X \to X^{\smallboxtimes}$
to a derivation
$$
d = \mathbf{1}^{\smallboxtimes} {\smallboxtimes} D {\smallboxtimes}  \mathbf{1}^{\smallboxtimes} \colon X^{\smallboxtimes} \to X^{\smallboxtimes}.
$$ 

It remains to check $d^2x = [c,x]$.  For this, consider
$$
d^2 = (\mathbf{1}^{\smallboxtimes} {\smallboxtimes} D {\smallboxtimes}  \mathbf{1}^{\smallboxtimes})^2.
$$
The cross-cancel argument of the proof of 
 Proposition \ref{A-infinity-equations-on-b} simplifies this to
 $$
 \begin{array}{rcl}
d^2 & = & \mathbf{1}^{\smallboxtimes} {\smallboxtimes} D_1^2 {\smallboxtimes}  \mathbf{1}^{\smallboxtimes} \\
& & +  \mathbf{1}^{\smallboxtimes} {\smallboxtimes} \overline{D}_2D_1 {\smallboxtimes}  \mathbf{1}^{\smallboxtimes} \\
& & + 
\mathbf{1}^{\smallboxtimes} {\smallboxtimes} 
(D_1 {\smallboxtimes} \mathbf{1} +  \mathbf{1} {\smallboxtimes} D_1)
\overline{D}_2 {\smallboxtimes}  \mathbf{1}^{\smallboxtimes} \\
& &+ 
\mathbf{1}^{\smallboxtimes} {\smallboxtimes} 
(\overline{D}_2 {\smallboxtimes} \mathbf{1} +  \mathbf{1} {\smallboxtimes} \overline{D}_2)
\overline{D}_2 {\smallboxtimes}  \mathbf{1}^{\smallboxtimes} . \\
\end{array}
$$
These factors are 
\begin{itemize}
\item $D_1^2 \ = \ (-\omega \circ B \circ \sigma)^2  \ = \ \omega \circ B^2 \circ \sigma = 0$,
\item $\overline{D}_2 D_1  \ = \ (-\omega^{{\smallboxtimes} 2} \circ \overline{\Delta} \circ \sigma) (-\omega \circ B \circ \sigma) \ = \  
\omega^{{\smallboxtimes} 2} \circ  \overline {\Delta}  B \circ \sigma$,
\item $\begin{array}{rcl}
(\overline{D}_2 {\smallboxtimes} \mathbf{1} +  \mathbf{1} {\smallboxtimes} \overline{D}_2)
\overline{D}_2 
& = &  ((-\omega^{{\smallboxtimes} 2} \circ \overline{\Delta} \circ \sigma) {\smallboxtimes} \mathbf{1} +  \mathbf{1} {\smallboxtimes} (-\omega^{{\smallboxtimes} 2} \circ \overline{\Delta} \circ \sigma))(-\omega^{{\smallboxtimes} 2} \circ \overline{\Delta} \circ \sigma)\\
& = &  \omega^{{\smallboxtimes} 2} \circ ( \overline{\Delta} {\smallboxtimes} \mathbf{1}^\otimes
-
\mathbf{1}^{\otimes} {\smallboxtimes} \overline{\Delta})  \overline{\Delta}  \circ \sigma
\\
& = & 0 \quad \text{ (by coassociativity of $\overline{\Delta}$)},
\end{array} 
$
\item $\begin{array}{rcl}
(D_1 {\smallboxtimes} \mathbf{1} +  \mathbf{1} {\smallboxtimes} D_1)
\overline{D}_2 
& = &  
((-\omega \circ B \circ \sigma) {\smallboxtimes} \mathbf{1} +
\mathbf{1} {\smallboxtimes} 
(-\omega \circ B \circ \sigma)) (-\omega^{{\smallboxtimes} 2} \circ \overline{\Delta} \circ \sigma)
\\
& = & -\omega^{{\smallboxtimes} 2} \circ (B  {\smallboxtimes} \mathbf{1}^\otimes +
\mathbf{1}^\otimes {\smallboxtimes} 
B  )   \overline{\Delta} \circ \sigma
\end{array} 
$
\end{itemize}
$B$ is coderivation with respect to $\Delta = \overline{\Delta} + \mathbf{1}^\otimes {\smallboxtimes} 1_\otimes + 1_\otimes {\smallboxtimes}\mathbf{1}^\otimes$, so the sum of these factors 
$$
\omega^{{\smallboxtimes} 2} \circ (\overline{\Delta} B - (B  {\smallboxtimes} \mathbf{1}^\otimes +
\mathbf{1}^\otimes {\smallboxtimes} 
B) \overline{\Delta}  ) \circ \sigma 
$$
would equal zero if $\overline{\Delta}$ were replaced by $\Delta.$  However
making the substitution $\overline{\Delta} = \Delta - \mathbf{1}^\otimes {\smallboxtimes} 1_\otimes - 1_\otimes {\smallboxtimes}\mathbf{1}^\otimes $,
 we can write 

 \vspace{.2in}

 \noindent
 $\omega^{{\smallboxtimes} 2} \circ (\overline{\Delta} B - (B  {\smallboxtimes} \mathbf{1}^\otimes +
\mathbf{1}^\otimes {\smallboxtimes} 
B) \overline{\Delta}  ) \circ \sigma  =
$
$$
\qquad
\qquad
\qquad
 \begin{array}{cl}
  & 
\omega^{{\smallboxtimes} 2} \circ [(\cancel{\Delta} -\mathbf{1}^\otimes {\smallboxtimes} 1_\otimes - 1 {\smallboxtimes}\mathbf{1}^\otimes) B 
 - (B  {\smallboxtimes} \mathbf{1}^\otimes +
\mathbf{1}^\otimes {\smallboxtimes} 
B)   (\cancel{\Delta} - \mathbf{1}^\otimes {\smallboxtimes} 1_\otimes - 1_\otimes {\smallboxtimes}\mathbf{1}^\otimes  )] \circ \sigma 
\\
 = & 
-\omega^{{\smallboxtimes} 2} \circ [(  \mathbf{1}^\otimes {\smallboxtimes} 1_\otimes + 1_\otimes {\smallboxtimes}\mathbf{1}^\otimes) B 
  -(B  {\smallboxtimes} \mathbf{1}^\otimes  +
\mathbf{1}^\otimes {\smallboxtimes} 
B)   (  \mathbf{1}^\otimes {\smallboxtimes} 1_\otimes + 1_\otimes {\smallboxtimes}\mathbf{1}^\otimes)] \circ \sigma 
\\
 = & 
-\omega^{{\smallboxtimes} 2} \circ [ 
\cancel{B {\smallboxtimes} 1_\otimes} 
+ \cancel{1_\otimes {\smallboxtimes}B}    
 - \cancel{B {\smallboxtimes} 1_\otimes} 
-  B(1)  {\smallboxtimes} \mathbf{1}^\otimes 
-\mathbf{1}^\otimes {\smallboxtimes} 
B(1) 
 - \cancel{1_\otimes {\smallboxtimes}B} 
] \circ \sigma 
\\
 = & 
\omega^{{\smallboxtimes} 2} \circ [
 b_0(1)  {\smallboxtimes} \mathbf{1}^\otimes +
\mathbf{1}^\otimes {\smallboxtimes} 
b_0(1) ] \circ \sigma 
\\
 = & 
-\omega (b_0(1))  {\smallboxtimes} \mathbf{1}^\otimes -
\mathbf{1}^\otimes {\smallboxtimes} 
(-\omega (b_0(1))   )
\\
 = & 
c {\smallboxtimes} \mathbf{1} -
\mathbf{1} {\smallboxtimes} 
c
\\
\end{array}
$$
Finally, reinserting this into the larger ${\smallboxtimes}$-product yields
$$
\mathbf{1}^{\smallboxtimes} {\smallboxtimes} (c {\smallboxtimes} \mathbf{1} - \mathbf{1} {\smallboxtimes} c) {\smallboxtimes}  \mathbf{1}^{\smallboxtimes}
= c {\smallboxtimes} \mathbf{1}^{\smallboxtimes} - \mathbf{1}^{\smallboxtimes} {\smallboxtimes} c.
$$
\end{proof}

\begin{prop}{\bf ($U_e(A)$)}
\label{proposition-defining-UeA}
The adjoint algebra is the quotient 
$$
U_e(A) =  U(A) /  I
$$ 
of the enveloping algebra by  
the two-sided ideal $I$
\begin{itemize}
\item $1_{\smallboxtimes} - \omega[\eta]$, and
\item the elements $\omega[ \sigma(a_1) \otimes \dotsm \otimes \sigma(a_{k-1}) \otimes \eta \otimes \sigma(a_{k+1}) \otimes \dotsm \otimes \sigma(a_{\ell}) ] \in X$ \\for $\ell > 1$ and $1 \leq k \leq \ell$.
\end{itemize}
The  derivation $d$ descends to a derivation on 
$U_e(A)$, and thus  $(U_e(A), d, c)$ is a curved dg-algebra.
\end{prop}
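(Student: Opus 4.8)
The plan is to reduce everything to a single stability statement, namely that the two-sided ideal $I$ is closed under $d$, i.e. $d(I) \subseteq I$; once this is known, the fact that $(U_e(A), d, c)$ is a curved dg-algebra is immediate, since the four defining identities ($d\bar c = 0$, $d^2 x = [\bar c, x]$, graded Leibniz, and associativity) are inherited from the already-verified structure on $U(A)$ by passing to the quotient. Because $d$ is a derivation and $I$ is two-sided, the graded Leibniz rule shows it suffices to check that $d$ carries each \emph{generator} of $I$ into $I$: for a product $u\cdot g\cdot v$ with $g$ a listed generator, the two outer Leibniz terms keep the factor $g\in I$, and the middle term $\pm\, u\,(dg)\,v$ lies in $I$ as soon as $dg\in I$. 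On a single generator $x\in X$ the derivation $d$ agrees with $D = D_1 + \overline{D}_2$, so the whole problem comes down to evaluating $D$ on the two families of generators.

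For the first generator I would argue as follows. The identity $1_{\smallboxtimes}\in X^{\otimes 0}$ has no tensor slot into which $D$ can be inserted, so $d(1_{\smallboxtimes}) = 0$ (this also records that the unit is closed). For $\omega[\eta]$, the reduced comultiplication vanishes on the length-one element $\eta$, so $\overline{D}_2(\omega[\eta]) = 0$, while $D_1(\omega[\eta]) = -\omega(B(\eta))$. The strict unit axiom $b_1(\eta) = 0$ kills the length-preserving term, leaving $-\omega(b_0\otimes\eta + \eta\otimes b_0)$; each summand is a length-two string carrying a unit entry, hence a generator of type (b). Thus $d(1_{\smallboxtimes} - \omega[\eta])\in I$.

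The second family is the substance. Fix $g = \omega[w]$ with $w = \sigma(a_1)\otimes\cdots\otimes\eta\otimes\cdots\otimes\sigma(a_\ell)$ carrying the unit in slot $k$ and $\ell > 1$, and expand $D_1(g) = -\omega(B(w))$ according to the consecutive block on which $b_\bullet$ acts. If the block avoids slot $k$, the output still contains $\eta$ and has length $\geq 2$ (the avoided slot forces block length $\leq \ell - 1$), so it is a type-(b) generator and lies in $I$. If the block contains $\eta$, the unit axioms force length exactly $2$ (all other lengths contribute $0$), yielding the contractions $b_2(\eta\otimes\mathbf{1}) = \mathbf{1}$ and $b_2(\mathbf{1}\otimes\eta) = -\mathbf{1}$ against a neighbour of $\eta$: for interior $k$ both occur and cancel by the opposite signs, while at a boundary $k\in\{1,\ell\}$ a single uncancelled term $\pm\,\omega[w']$ survives, where $w'$ denotes $w$ with its $k$-th (unit) slot deleted.

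This surviving boundary term is exactly what $\overline{D}_2(g) = -\omega^{\smallboxtimes 2}(\overline{\Delta}(w))$ absorbs. In each summand $\omega[w_{\leq i}]\smallboxtimes\omega[w_{> i}]$ of the reduced coproduct the factor carrying $\eta$ is a type-(b) generator whenever its length is $\geq 2$, placing that summand in $I$; the sole exception is the boundary summand in which $\eta$ is split off as a length-one factor, where the first relation $\omega[\eta]\equiv 1_{\smallboxtimes}$ collapses it to the identity and returns precisely $\mp\,\omega[w']$. The Koszul sign produced by $\omega^{\smallboxtimes 2}$ (from $|\omega| = 1$ and $|\eta| = -1$) is exactly the one needed for this to cancel the $D_1$ leftover modulo $I$, so $d(g) = D_1(g) + \overline{D}_2(g)\in I$. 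I expect this boundary cancellation to be the main obstacle: it is the one place where \emph{both} defining relations of $I$ and the precise shift-map signs must conspire, and it is the structural reason $U(A)$ is built from the \emph{reduced} comultiplication $\overline{\Delta}$ rather than the full $\Delta$. With $d(I)\subseteq I$ established, $d$ descends and $(U_e(A), d, c)$ is a curved dg-algebra.
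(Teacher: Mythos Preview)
Your proof is correct and follows essentially the same strategy as the paper's: reduce to $dI\subseteq I$, verify this on generators using the Leibniz rule, and for the second family split into the interior case (where the two $b_2$ contractions against $\eta$ cancel) and the boundary case (where the lone surviving $D_1$ term is cancelled modulo $I$ by the unique $\overline{D}_2$ summand that isolates $\eta$, after invoking $\omega[\eta]\equiv 1_{\smallboxtimes}$). Your treatment of the first generator is in fact slightly more careful than the paper's: the paper asserts $d(\omega[\eta])=0$ outright, whereas you correctly note that in the curved case $B(\eta)$ picks up the $b_0$-insertion terms $b_0\otimes\eta$ and $\eta\otimes b_0$, which are nonzero but are length-two strings containing $\eta$ and hence already lie in $I$.
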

\begin{proof}
The needed property is that $dx \in I$ for any generator $x$ of $I$.  This guarantees that $dI \subseteq I:$
$$
d(uxv) =(du)xv \pm u(dx)v \pm ux(dv) \in I \quad \quad  \text{ for $x \in I$.}
$$

We can quickly dispatch with the generator $1_{\smallboxtimes} - \omega[\eta]:$ 
$$d(1_{\smallboxtimes} - \omega[\eta]) = 0-0 = 0.$$ 
So we move on to
$$
d \omega[ \sigma(a_1) \otimes  \dotsm \otimes \sigma(a_{\ell}) ]
$$
 with $\sigma(a_{k}) = \eta$ and $\ell>1$. Unrolling definitions puts this element into a form 
 about which we can reason:
$$
\begin{array}{rcl}
d \omega[ \sigma(a_1) \otimes  \dotsm \otimes \sigma(a_{\ell}) ] & =
& D_1 \omega[ \sigma(a_1) \otimes \dotsm \otimes \sigma(a_{\ell}) ] \\
&  & + \overline{D}_2 \omega[ \sigma(a_1) \otimes  \dotsm \otimes \sigma(a_{\ell}) ] 
\\
& = 
& -\omega \circ b \circ \sigma\circ \omega[ \sigma(a_1) \otimes \dotsm \otimes \sigma(a_{\ell}) ] \\
& & -\omega^{{\smallboxtimes} 2} \circ \overline{\Delta} \circ \sigma \circ \omega[ \sigma(a_1) \otimes \dotsm \dotsm \otimes \sigma(a_{\ell}) ] 
\\
&= 
& -\omega [ b (\sigma(a_1) \otimes \dotsm \otimes \sigma(a_{\ell}) ) ]\\
& & -\omega^{{\smallboxtimes} 2} [ \overline{\Delta}  (\sigma(a_1) \otimes  \dotsm \otimes \sigma(a_{\ell}) ) ]
\\
& = 
& -\omega [  b (\sigma(a_1) \otimes \dotsm \otimes \sigma(a_{\ell}) ) ]\\
& & -\omega^{{\smallboxtimes} 2} [ \overline{\Delta}  (\sigma(a_1) \otimes  \dotsm \otimes \sigma(a_{\ell}) ) ]
\\
& = 
& -\omega [ b (\sigma(a_1) \otimes \dotsm \otimes \sigma(a_{\ell}) )  ]\\ 
& & -\omega^{{\smallboxtimes} 2} [ \sum_{i=1}^{\ell-1} \sigma(a_1) \otimes \dotsm \otimes  \sigma(a_i) {\smallboxtimes} \sigma(a_{i+1}) \otimes \dotsm \otimes \sigma(a_{\ell})  ].
\\
\end{array}
$$

In this form and  when $k \not \in \{1, \ell \}$, this  reduces modulo $I$ to 
$
d \omega[ \sigma(a_1) \otimes  \dotsm \otimes \sigma(a_{\ell}) ]  \equiv_I
$
$$
 \pm \omega [ \sigma(a_1) \otimes \dotsm \otimes  \ 
  (b_2 \otimes \mathbf{1} + \mathbf{1} \otimes b_2 ) ( \sigma(a_{k-1}) \otimes  \eta \otimes \sigma(a_{k+1}))
 \  \otimes  \dotsm \otimes \sigma(a_{\ell})   ], 
$$
since the $\overline{D}_2$ terms and all the $\overline{D}_2$ terms, except those where $b_2$ ``eats'' $\eta$, lie in $I$.
These remaining $b_2$ terms equal zero:
$$
(b_2 \otimes \mathbf{1} + \mathbf{1} \otimes b_2 ) ( \sigma(a_{k-1}) \otimes  \eta \otimes \sigma(a_{k+1})) \  = \ 
(-1)^{\sigma(a_{k-1})} (-1 +1)\sigma(a_{k-1}) \otimes   \sigma(a_{k+1}) = 0.
$$

In the case when $k = 1$, 
$$
\begin{array}{rcl}
d \omega[ \eta \otimes \sigma(a_2) \otimes  \dotsm \otimes \sigma(a_{\ell}) ]  & \equiv_I  &   -\omega[ b_2(\eta \otimes \sigma(a_2))  \otimes \dotsm \otimes \sigma(a_\ell)] \\
& & -\omega^{{\smallboxtimes} 2} [\eta {\smallboxtimes}   \sigma(a_2) \otimes \dotsm \otimes \sigma(a_\ell)]
\\
& =  &   -\omega[  \sigma(a_2) \otimes \dotsm \otimes \sigma(a_\ell)] \\
& &  + \omega[\eta] {\smallboxtimes}  \omega [ \sigma(a_2) \otimes \dotsm \otimes \sigma(a_\ell)]
\\
& \equiv_I  &   -\omega[  \sigma(a_2) \otimes \dotsm \otimes \sigma(a_\ell)] \\
& &   +\omega [ \sigma(a_2) \otimes \dotsm \otimes \sigma(a_\ell)].
\\
& = &  0.
\\
\end{array}
$$
Where the last congruence uses the identity $1_{\smallboxtimes} \equiv_I  \omega [\eta]$.  The case $k = \ell$
is the same except passing $b_2$ and the second $\omega$ over the tensors picks up a global sign of
$(-1)^{\sigma(a_1) + \dotsm + \sigma(a_{\ell-1})}.$
\end{proof}

\subsection{$U_e(A)$ module identification and universality.}

\begin{thm}{\bf (module identification)}
\label{theorem-isomorphism-of-A-and-UeA-modules}
 There is an isomorphism of categories
 $$
 \operatorname{Mod_\infty^{\text{st}}}(A) \leftrightarrow  \operatorname{Mod}_\text{dg}^\text{st}(U_e(A))
 $$
 in which identifies 
$(M, b^M)$ with $M$
as a  $U_e(A)$-module with 
 \begin{itemize}
 \item differential $dm = -b_1^M(m),$ and 
 \item multiplication $m \cdot \omega[\sigma(a_1) \otimes \cdots \otimes \sigma(a_\ell)] = -(-1)^m b_\bullet^M(m \odot \sigma(a_1) \otimes \cdots \otimes \sigma(a_\ell))$.
 \end{itemize}
\end{thm}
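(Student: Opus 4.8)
The plan is to realize the correspondence as the identity on underlying $S$-modules and $S$-linear maps, so that the whole theorem reduces to a dictionary matching the $\mathbf{A}_\infty$-structure maps $b^M_\bullet$ with the dg-$U_e(A)$-module data $(d,\cdot)$ and then matching axioms against axioms. The organizing observation is that $U_e(A)$ is a quotient of the \emph{tensor} algebra $U(A)=X^{\smallboxtimes}$ on $X=((A[1])^{\otimes\geq 1})[-1]$, so by the universal property of a free algebra a right $U(A)$-module structure on $M$ is nothing more than an $S$-linear map $M\odot X\to M$, i.e. a family of maps $M\odot (A[1])^{\otimes\ell}\to M$ for $\ell\geq 1$, with the action of a word of generators given by composition. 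Under the stated formula these are exactly (up to the Koszul sign $-(-1)^m$) the module operations $b^M_{\ell+1}$ consuming $\ell$ algebra inputs, while the differential records $b^M_1$. Consequently associativity of the $U_e(A)$-action is automatic, being the associativity of composition in $X^{\smallboxtimes}$, and the remaining object-level conditions — descent through the ideal $I$, unitality, the Leibniz rule, and $d^2=(\,\cdot\,c)$ — must be checked one at a time; the inverse assignment is read off by solving the defining formulas for $b^M_\bullet$ in terms of $(d,\cdot)$.

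First I would dispatch the purely algebraic conditions. Since $M$ is a right module, descent to the quotient $U_e(A)=U(A)/I$ requires only that the generators of the two-sided ideal $I$ of Proposition \ref{proposition-defining-UeA} act by zero on the right. Killing $\omega[\sigma(a_1)\otimes\cdots\otimes\eta\otimes\cdots\otimes\sigma(a_\ell)]$ for $\ell>1$ is, via the multiplication formula, exactly the condition $b^M_\ell(\mathbf{1}\odot\mathbf{1}^\otimes\otimes\eta\otimes\mathbf{1}^\otimes)=0$ for $\ell\neq 2$, and the relation $1_{\smallboxtimes}\equiv_I\omega[\eta]$ together with the algebra unit forces $m\cdot\omega[\eta]=m$, which (after the sign bookkeeping hidden in the factor $-(-1)^m$) unwinds to the unit condition $-b^M_2(\mathbf{1}\odot\eta)=\mathbf{1}$. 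Thus strict unitality of an $\mathbf{A}_\infty$-module is precisely what makes the operations descend to a unital $U_e(A)$-action, and conversely.

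The heart of the proof, and the step I expect to be the main obstacle, is matching the differential axioms with the single $\mathbf{A}_\infty$-module equation $b^M_\bullet(B^M)=0$, organized by the length $\ell$ of the $A$-word on which it is evaluated. The length-$0$ component involves only $b^M_1 b^M_1$ and the algebra curvature term $b_0$, and reproduces $d^2m=m\cdot c$ once one recalls from the $U(A)$-verification that $c=-\omega(b_0(1))$. For $\ell\geq 1$ the crucial input is the differential of a single generator computed in Proposition \ref{proposition-defining-UeA}, namely $d\,\omega[\cdots]=-\omega[b(\cdots)]-\omega^{\smallboxtimes 2}[\overline{\Delta}(\cdots)]$: the $D_1=-\omega B\sigma$ piece feeds the algebra operations $b_\bullet$ into the word, matching the summand $\mathbf{1}\odot\mathbf{1}^\otimes\otimes b_\bullet\otimes\mathbf{1}^\otimes$ of $B^M$, while the $\overline{D}_2=-\omega^{\smallboxtimes 2}\overline{\Delta}\sigma$ piece deconcatenates the word into a product of two shorter generators, and by the already-established associativity this produces a nested module operation, matching the summand $b^M_\bullet\odot\mathbf{1}^\otimes$ of $B^M$. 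Expanding the Leibniz rule $d(m\cdot u)=(dm)\cdot u+(-1)^m m\cdot(du)$ on a generator $u$ and collecting the three resulting families of terms against $b^M_\bullet(B^M)=0$ is then a bookkeeping exercise in the style of the cross-cancel argument of Proposition \ref{A-infinity-equations-on-b}; the identification that the reduced deconcatenation coproduct encoding the differential of $U_e(A)$ is exactly the combinatorics of the module differential is where all the work, and all the signs, lives.

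Finally, for morphisms I would expand the closedness condition $\delta\phi_\bullet=0$ of Proposition \ref{proposition-definition-of-mod-infinity-A} for a strict (linear, degree-$0$) morphism $\phi_1$, again sorting by $A$-word length. Because $\phi_\bullet=\phi_1$ is supported in word-length $0$, the length-$0$ component collapses to $b^{M'}_1\phi_1=\phi_1 b^M_1$, i.e. commutation with $d$, and each length-$\ell$ component collapses to $b^{M'}_{\ell+1}(\phi_1\odot\mathbf{1}^\otimes)=\phi_1 b^M_{\ell+1}$, i.e. $\phi_1(m\cdot u)=\phi_1(m)\cdot u$ on generators $u$, which propagates to all of $U_e(A)$ by associativity. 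These are exactly the defining conditions $[d,\varphi]=0$ of $\operatorname{Mod}^{\text{st}}_\text{dg}(U_e(A))$. Since on both sides composition of strict morphisms reduces to ordinary composition of $S$-linear maps and identities go to identities, the assignment is a functor that is bijective on objects and on morphisms, hence an isomorphism of categories $\operatorname{Mod_\infty^{\text{st}}}(A)\leftrightarrow\operatorname{Mod}_\text{dg}^\text{st}(U_e(A))$.
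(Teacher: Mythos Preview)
Your proposal is correct and follows essentially the same route as the paper: both arguments match the three dg-module axioms (associativity, Leibniz, $d^2=(\cdot\,c)$) with the $\mathbf{A}_\infty$-module equation $b^M_\bullet(B^M)=0$ sorted by tensor length, identify strict unitality with descent from $U(A)$ to $U_e(A)$, and then run the same length decomposition on $\delta\phi_\bullet=0$ for strict morphisms. Your write-up is more explicit than the paper's in two places---invoking the universal property of the tensor algebra to make associativity automatic, and unpacking how the $D_1$ and $\overline{D}_2$ pieces of the $U_e(A)$ differential separately match the two summands of $B^M$---but these are elaborations of the same proof rather than a different approach.
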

\begin{proof}
The dg-module equations unroll exactly to $\mathbf{A}_\infty$-module equations.  These are
\begin{itemize}
\item $d^2 m  = m \cdot c$
\item $d(m \cdot x) = d(m) \cdot x + (-1)^m m \cdot d(x),$ and
\item $(m \cdot x) \cdot x' = m \cdot (x \cdot x')$.
\end{itemize}
These translate into equations for $M$ as an $A$-module:
\begin{itemize}
\item the $\mathbf{A}_\infty$-equation $b^M_\bullet(b_\bullet^M \odot \mathbf{1}^\otimes + \mathbf{1} \odot \mathbf{1}^\otimes \otimes b_\bullet \otimes \mathbf{1}^\otimes) = 0$ restricted to $M$, 
\item  the same $\mathbf{A}_\infty$-equation restricted to $M \odot A[1]^{\otimes \geq 1}$, and
\item the rule by which we extend the multiplication $M \odot X \to M$ to all of $U(A)$.
\end{itemize}
Finally, the strict-unitality conditions
\begin{itemize}
\item $-b_2^M(\mathbf{1} \odot \eta)   = \mathbf{1}$, and 
\item $b^M_\ell(\mathbf{1} \odot \mathbf{1}^{\otimes } \otimes \eta \otimes \mathbf{1}^{\otimes }) = 0$ for $\ell \neq 2$
\end{itemize}
are equivalent to this unital $U(A)$-module descending to a unital  $U_e(A)$-module.

As for morphisms, we make the identification
$$
\phi_\bullet = \phi_1 = \varphi \colon M \to M'.
$$
The conditions
\begin{itemize}
\item $\varphi(m \cdot x) = \varphi(m) \cdot x$, and
\item $\varphi(dm) = d'\varphi(m)$,
\end{itemize}
translate into 
\begin{itemize}
\item the equation  $(\delta\phi_\bullet)  =0$ restricted to $M  \odot A[1]^{\otimes \geq 1}$, and 
\item  the same equation restricted to
restricted to $M$. \end{itemize}
\end{proof}

\begin{thm}{\bf (universality of $U_e(A)$)}
 \label{theorem-universal-property-of-U_e(A)}
The assignment $A \mapsto U_e(A)$ is functorial, 
and 
$U_e$ is left adjoint to the inclusion $\mathbf{Alg}_\text{dg}^\ast \hookrightarrow \mathbf{Alg}_\infty$.
In other words, $U_e(A)$ is universal in the sense 
that there is a natural isomorphism
$$
(\omega \circ -   \circ i_\bullet) \colon 
\mathbf{Alg}_\text{dg}^\ast(U_e(-), -)
 \stackrel{\sim}{\Longrightarrow}
\mathbf{Alg}_\infty(-, -)
$$
between functors on 
 $\mathbf{Alg}_\infty^\text{op} \times  \mathbf{Alg}_\text{dg}^\ast$.
\end{thm}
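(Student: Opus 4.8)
The statement is the adjunction $U_e \dashv (\text{inclusion }\mathbf{Alg}_\text{dg}^\ast \hookrightarrow \mathbf{Alg}_\infty)$, so I would fix the unit $i_\bullet$ — the family of maps including $A$ as the generators of $U_e(A)$, whose $\ell$-th component sends $\sigma(a_1) \otimes \dotsm \otimes \sigma(a_\ell)$ to (the shift of) the generator $\omega[\sigma(a_1) \otimes \dotsm \otimes \sigma(a_\ell)] \in X \subseteq U_e(A)$ — and then exhibit an explicit inverse to $(\omega \circ - \circ i_\bullet)$ by \emph{freely extending} an $\mathbf A_\infty$-morphism to a dg-algebra morphism. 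One recovers along the way that $i_\bullet$ is itself an $\mathbf A_\infty$-morphism, being the image of $\mathrm{id}_{U_e(A)}$ under the bijection.

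\textbf{Construction of the inverse.} Fix a curved dg-algebra $\mathcal A'$ and an $\mathbf A_\infty$-morphism $f_\bullet \colon A \to \mathcal A'$. Because $U(A) = X^{\smallboxtimes}$ is the \emph{free} graded algebra on $X$, there is a unique graded algebra map $\hat f \colon U(A) \to \mathcal A'$ with $\hat f(\omega[\sigma(a_1) \otimes \dotsm \otimes \sigma(a_\ell)]) = \omega' f_\ell(\sigma(a_1) \otimes \dotsm \otimes \sigma(a_\ell))$ on generators, extended multiplicatively across $\smallboxtimes$. I must then verify that $\hat f$ is a chain map, that it respects curvature and the unit, and that it kills the ideal $I$ so as to descend to $U_e(A)$, giving a map $\Phi \colon \mathbf{Alg}_\infty(A, \mathcal A') \to \mathbf{Alg}_\text{dg}^\ast(U_e(A), \mathcal A')$.

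\textbf{The three checks.} For $\hat f \circ d = d' \circ \hat f$: both sides are $\hat f$-twisted derivations of the free algebra $U(A)$, so equality need only be checked on generators. On a generator, $d = D_1 + \overline D_2$ contributes the $b_\bullet$-terms (through $D_1$) and, after multiplying the two resulting generators in $\mathcal A'$, the $\frak{m}_2'$-terms (through $\overline D_2$), while $d' \hat f = \frak{m}_1' \hat f$ contributes the $\frak{m}_1'$-term; collecting these reproduces exactly the degree-$\ell$ component of the $\mathbf A_\infty$-morphism equation $B'F = FB$, in which the $b'_{\geq 3}$ terms are absent precisely because $\mathcal A'$ is a curved dg-algebra. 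The curvature identity $\hat f(c) = c'$ is the $\ell = 0$ component of the same equation (recall $c = -\omega(b_0(1))$). Finally $\hat f$ descends: on the first family of generators of $I$, $\hat f(1_{\smallboxtimes} - \omega[\eta]) = e' - \omega' f_1(\eta) = e' - \omega'(\eta') = 0$, which also shows $\hat f$ is unital; on the second family, $\hat f(\omega[\dotsm \otimes \eta \otimes \dotsm]) = \omega' f_\ell(\dotsm \otimes \eta \otimes \dotsm) = 0$ for $\ell > 1$ by the strict-unit condition on $f_\bullet$. Hence $\hat f \in \mathbf{Alg}_\text{dg}^\ast(U_e(A), \mathcal A')$.

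\textbf{Bijection, naturality, and the main obstacle.} The map $\Phi$ is injective, since $\hat f$ is determined on generators by the $f_\ell$, and surjective: given any dg-morphism $\mathfrc g \colon U_e(A) \to \mathcal A'$, setting $f_\ell = \sigma' \circ \mathfrc g \circ (\text{generator})$ — this is precisely $(\omega \circ - \circ i_\bullet)$ applied to $\mathfrc g$ — yields an $\mathbf A_\infty$-morphism by running the chain-map computation backwards, and $\hat f = \mathfrc g$ because both are algebra maps agreeing on the generators of $U_e(A)$. Thus $\Phi$ is inverse to $(\omega \circ - \circ i_\bullet)$, giving the claimed bijection; naturality in $\mathcal A'$ is immediate as the correspondence is post-composition, and functoriality of $A \mapsto U_e(A)$ together with naturality in $A$ follow formally, as for any left adjoint specified by a universal arrow. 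The one genuinely computational step, and the main obstacle, is the chain-map verification: one must confirm that $\overline D_2$ matches the products $\frak{m}_2'(f_i \otimes f_j)$ and that all Koszul signs — those from crossing the $b$'s past the $f$'s and those from the shift isomorphisms $\sigma, \omega$ — align so that $\hat f d = d' \hat f$ coincides term-by-term with $B'F = FB$; the reduced-versus-full comultiplication bookkeeping is identical to that in the verification of $U(A)$ above, and I would track signs using the crossing-count method.
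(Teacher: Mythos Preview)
Your proposal is correct and follows essentially the same route as the paper: construct the unit $i_\bullet$, define the inverse by extending $f_\bullet$ freely over the tensor algebra, and check that the result is a dg-map killing the ideal $I$. The only packaging difference is that the paper reduces the chain-map verification $\hat f\, d = d'\hat f$ to the single universal computation that $i_\bullet$ itself is an $\mathbf{A}_\infty$-morphism (taking $\mathfrc f = \mathrm{id}$ for necessity, and using that the defect of $\sigma\mathfrc f\omega$ precomposed with $(i_\bullet)^\diamond$ equals the defect of $f_\bullet$ for sufficiency), whereas you check the chain-map condition directly for each $f_\bullet$; the underlying tensor-symbol bookkeeping is the same either way.
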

\begin{proof}
Functoriality is immediate ($F \mapsto \omega F \sigma$ on variables), so consider the adjointness statement.
By abuse of notation, write $X$ for the image of  $((A[1])^{\otimes \geq 1})[-1]$
in $U_e(A);$ notice $X[1]$ is the image of $A[1]^{\otimes \geq 1}$ in $U_e(A)[1].$
Define the $\mathbf{A}_\infty$-morphism 
$$
i_\bullet \colon A[1]^{\otimes \geq 1} \to U_e(A)[1]
$$
as the map $A[1]^{\otimes \geq 1} \to X[1] \subseteq U_e(A)[1].$  

Let $A'$ be a curved dg-algebra, and consider an $\mathbf{A}_\infty$-morphism
$$
f_\bullet  \colon A[1]^{\otimes \geq 1} \to A'[1].
$$
From this we can define 
$$
\mathfrc{f} \colon  U_e(A) \to A'
$$
by $\mathfrc{f}(x) = (\omega \circ f_\bullet  \circ \sigma)(x)$ for $x \in X$.  These maps fit in a commutative diagram
$$
\begin{tikzcd}
(A[1])^{\otimes \geq 1} \arrow{dr}[swap]{f_\bullet}  \arrow{r}{i_\bullet}& U_e(A)[1] 
 \arrow{d}{\sigma \circ \mathfrc{f} \circ \omega} \\
&A'[1]. \\
\end{tikzcd}
$$
Furthermore, given $\mathfrc{f}$ one can use this diagram to define $f_\bullet.$

The remaining question is whether or not the maps commute with operators $B$.
This depends only on checking that $i_\bullet$ is an $\mathbf{A}_\infty$-morphism.  Indeed,
taking $\mathfrc{f} = $ identity map on  $U_e(A)$ shows the necessity,  and sufficiency 
follows from the fact that $(i_\bullet)^\diamond$ is injective and
$$
\begin{array}{rcl}
(B (\sigma \circ \mathfrc{f} \circ \omega)^{\diamond} - (\sigma \circ \mathfrc{f} \circ \omega)^{\diamond} B)  (i_\bullet)^\diamond & 
= & B  (\sigma \circ \mathfrc{f} \circ \omega)^{\diamond}  (i_\bullet)^\diamond - (\sigma \circ \mathfrc{f} \circ \omega)^{\diamond}  (i_\bullet)^\diamond B \\
& = &  B (f_\bullet)^\diamond -  (f_\bullet)^\diamond B.
\end{array}
$$

Finally, layers of notation constitute the only difficulty in 
 verifying $i_\bullet$ is an $\mathbf{A}_\infty$-morphism. 
To be clear, the output of $(i_\bullet)^{\diamond}$ on an element $\sigma(a_1) \otimes \dotsm \otimes \sigma(a_k)$
is the sum of the $2^{k-1}$ terms $\sigma(a_1)  \oslash \dotsm \oslash \sigma(a_k)$ where $\oslash \in \{ \otimes, \diamond \}$.
We must consider $B (i_\bullet)^{\diamond} - (i_\bullet)^{\diamond} B = $
$$
[
\mathbf{1}^\diamond \diamond (- \sigma \circ d \circ \omega) \diamond \mathbf{1}^\diamond + 
\mathbf{1}^\diamond \diamond (- \sigma \circ ( \mathbf{1} \cdot \mathbf{1} ) \circ \omega^{\diamond 2}) \diamond \mathbf{1}^\diamond
] (i_\bullet)^{\diamond}
-
(i_\bullet)^{\diamond}
[
\mathbf{1}^\otimes \otimes b_\bullet \otimes \mathbf{1}^\otimes
] .
$$
In isolation, 
$\mathbf{1}^\diamond \diamond (- \sigma \circ d \circ \omega) \diamond \mathbf{1}^\diamond + 
\mathbf{1}^\diamond \diamond (- \sigma \circ ( \mathbf{1} \cdot \mathbf{1} ) \circ \omega^{\diamond 2}) \diamond \mathbf{1}^\diamond$
restricted to the output of $(i_\bullet)^\diamond$ equals
$$
\begin{array}{cl}
& \mathbf{1}^\diamond \diamond (- \sigma \circ (-\omega \circ B \circ \sigma - \omega^{{\smallboxtimes} 2} \circ \overline{\Delta} \circ \sigma)
\circ \omega) \diamond \mathbf{1}^\diamond  \\
& + \mathbf{1}^\diamond \diamond (- \sigma \circ ( \mathbf{1} \cdot \mathbf{1} ) \circ \omega^{\diamond 2}) \diamond \mathbf{1}^\diamond \\
= &  \mathbf{1}^\diamond \diamond (- \sigma \circ (-\omega \circ B \circ \sigma) 
\circ \omega) \diamond \mathbf{1}^\diamond  \\
& + \mathbf{1}^\diamond \diamond (- \sigma \circ (
- \omega^{{\smallboxtimes} 2} \circ \overline{\Delta} \circ \sigma)
\circ \omega) \diamond \mathbf{1}^\diamond  \\
& + \mathbf{1}^\diamond \diamond (- \sigma \circ ( \mathbf{1} \cdot \mathbf{1} ) \circ \omega^{\diamond 2}) \diamond \mathbf{1}^\diamond \\
= & \mathbf{1}^\diamond \diamond B \diamond \mathbf{1}^\diamond \\
 & +\mathbf{1}^\diamond \diamond ( \sigma \circ (
 \omega^{{\smallboxtimes} 2} \circ \overline{\Delta})) \diamond \mathbf{1}^\diamond  \\
& - \mathbf{1}^\diamond \diamond (\sigma \circ ( \mathbf{1} \cdot \mathbf{1} ) \circ \omega^{\diamond 2}) \diamond \mathbf{1}^\diamond .\\
\end{array}
$$
The last two lines in the expression cancel.  This can been seen by the fact that their output is in tensors of the form 
$\sigma(a_1)  \oslash \dotsm \oslash \sigma(a_k)$ where $\oslash \in \{ \otimes, \diamond, {\smallboxtimes} \}$ with exactly one $\oslash = {\smallboxtimes}.$
Each such tensor is obtained two ways with opposite signs: one by changing ${\smallboxtimes}$ to $\diamond,$ and the other by changing ${\smallboxtimes}$ to $\otimes.$

It remains to compare  $[\mathbf{1}^\diamond \diamond B \diamond \mathbf{1}^\diamond]  (i_\bullet)^\diamond$ and $(i_\bullet)^{\diamond}
[
\mathbf{1}^\otimes \otimes b_\bullet \otimes \mathbf{1}^\otimes
]$.  Again these cancel, as can be seen by the fact that an output tensor is obtained by putting in first either the $\diamond$'s or the $\mathbf{1}^\otimes \otimes b_\bullet \otimes \mathbf{1}^\otimes.$
\end{proof}


\section{Module adjunctions and the quasi-equivalence $Q \sim \mathbf{1}$.} 
\label{section:module-adjunctions}

This section defines a bimodule structure on $U_e(A)$ (Lemma \ref{lemma-UA-bimod}), uses this to define the functor $Q$ and prove that it is adjoint to inclusion of the strict subcategory (Theorem \ref{theorem-definition-of-Q}),
and finally shows that $Q$ and $\mathbf{1}$ are quasi-equivalent functors on $\operatorname{Mod_\infty}(A)$ (Theorem \ref{theorem-Q-1-quasi-equivalence}).

\begin{lem} {\bf (the bimodule $U_e(A)$)}
\label{lemma-UA-bimod}
The adjoint algebra is a strictly unital $A-A$-bimodule defined by  
$$
b_{\bullet, \bullet} \colon (A[1])^\otimes \odot U_e(A) \odot (A[1])^\otimes \to U_e(A)
$$
where
$$
b_{\bullet, \bullet}  = d + \omega^+ \smallboxtimes \mathbf{1}^{\smallboxtimes}  -   \mathbf{1}^{\smallboxtimes}   \smallboxtimes \omega^+ 
$$
for the shift-multiplication maps
\begin{itemize}
\item $\omega^+ \smallboxtimes \mathbf{1}^{\smallboxtimes} \colon (A[1])^{\otimes \geq 1} \odot U_e(A) \to U_e(A)$,
and
\item $\mathbf{1}^{\smallboxtimes} \smallboxtimes \omega^+ \colon U_e(A) \odot (A[1])^{\otimes \geq 1} \to U_e(A).$
\end{itemize}
\end{lem}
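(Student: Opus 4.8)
The plan is to verify directly the two requirements of Definition~\ref{definition-infinity-bimodule}: the $\mathbf{A}_\infty$-equation $b_{\bullet,\bullet}(B^V) = 0$, which is equivalent to $(B^V)^2 = 0$ by the cross-cancel argument of Proposition~\ref{A-infinity-equations-on-b}, together with strict unitality. Throughout I abbreviate the two shift-multiplications by $L = \omega^+ \smallboxtimes \mathbf{1}^{\smallboxtimes}$ and $R = \mathbf{1}^{\smallboxtimes} \smallboxtimes \omega^+$, so that $b_{\bullet,\bullet} = d + L - R$, and I use that $L$ collapses the \emph{entire} adjacent left tensor $y \in (A[1])^{\otimes \geq 1}$ into the single generator $\omega(y) \in X$ and left-$\smallboxtimes$-multiplies it onto $U_e(A)$ (dually for $R$). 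Since $I$ is two-sided, both $L$ and $R$ descend to the quotient $U_e(A)$.

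I would dispatch strict unitality first, since it is built into the quotient defining $U_e(A)$. If a collapsed block $y$ has length $\geq 2$ and contains $\eta = \sigma(e)$, then $\omega(y)$ is exactly one of the second family of generators of the ideal $I$ in Proposition~\ref{proposition-defining-UeA}, so $\omega(y) = 0$ in $U_e(A)$ and $L(y \odot u) = 0$; the right-hand case is identical. This is precisely the vanishing demanded when $k + 1 + k' > 2$ and some input is a unit. The remaining boundary case $k=1$, $y = \eta$ gives $L(\eta \odot u) = \omega(\eta)\smallboxtimes u = u$ through the relation $1_{\smallboxtimes} \equiv_I \omega[\eta]$, which is the permitted unit action rather than a vanishing condition.

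For the $\mathbf{A}_\infty$-equation I would expand $b_{\bullet,\bullet}(B^V)$ using $b_{\bullet,\bullet} = d + L - R$ and $B^V = B\odot\mathbf{1}\odot\mathbf{1}^\otimes + \mathbf{1}^\otimes \odot b_{\bullet,\bullet}\odot \mathbf{1}^\otimes + \mathbf{1}^\otimes\odot\mathbf{1}\odot B$, then group the terms so that each group vanishes for one structural reason. The decisive point, and the reason $U_e(A)$ is designed as it is, is that the derivation on a generator splits as $D = D_1 + \overline{D}_2$ with $D_1 = -\omega B \sigma$ and $\overline{D}_2 = -\omega^{{\smallboxtimes} 2}\overline{\Delta}\sigma$, and these two pieces encode exactly the two nontrivial bimodule compatibilities. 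First, the terms in which $B$ acts on the left tensor and the result is then collapsed, $L\circ(B\odot\mathbf{1}\odot\mathbf{1}^\otimes)$, cancel against the $D_1$-part of $d\circ L$, because $D_1(\omega(y)) = -\omega(By)$; dually the right $B$-terms cancel the $D_1$-part of $d\circ R$, the relative sign being supplied by the $-R$. Second, the ``double left action'' $L\circ(\mathbf{1}^\otimes\odot L\odot\mathbf{1}^\otimes)$, which regroups the left inputs into two consecutive generators, cancels against the $\overline{D}_2$-part of $d\circ L$, since $\overline{\Delta}$ is precisely the sum over splittings of a block into two consecutive nonempty sub-blocks; the right-hand statement is the mirror image. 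Third, the purely internal contribution $d^2 = [c,-]$ is the curvature relation of the curved dg-algebra $U_e(A)$, and it cancels against the boundary $b_0$-insertions coming from the two outer $B$-summands once they are collapsed by $L$ and $R$, using that the curvature of $U_e(A)$ equals $-\omega(b_0(1))$ as computed in the verification of Proposition~\ref{proposition-defining-UeA}, and that left (resp. right) multiplication by this element is realized by $L$ (resp. $R$). Finally, the mixed terms $L\circ R$ and $R\circ L$ merely record that the left and right actions commute, which is associativity of $\smallboxtimes$ in $U_e(A)$.

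The genuine obstacle is not the conceptual matching above but keeping the Koszul signs consistent across these cancellations, in particular the relative sign between the $D_1$-contribution of $d\circ L$ and the collapsed $B$-action, and the sign produced when $\overline{\Delta}$ or $d$ is commuted past a tensor factor. This is exactly the sign whose verification by counting string-diagram crossings is illustrated in the figure accompanying the remark on signs; I would pin it down there and let the mirror-image computation settle the right-hand terms. Once the signs are fixed, each group above is identically zero, so $b_{\bullet,\bullet}(B^V) = 0$, and together with strict unitality this shows that $(U_e(A), b_{\bullet,\bullet})$ is a strictly unital $A$-$A$-bimodule.
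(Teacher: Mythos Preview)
Your direct verification is correct in outline and would succeed, but the paper takes a genuinely different and slicker route. Rather than expand $b_{\bullet,\bullet}(B^V)$ and match cancellations, the paper introduces an auxiliary dga $\Omega = Y^{\smallboxtimes}$ with $Y = ((A[1])^{\otimes})[-1]$, built from the \emph{full} comultiplication $\Delta$ instead of $\overline{\Delta}$. Because $B^A$ is a coderivation for $\Delta$, one gets $(d^\Omega)^2 = 0$ on the nose. The paper then identifies $A[1]^\otimes \odot \Omega \odot A[1]^\otimes$ with the ideal $Y \smallboxtimes \Omega \smallboxtimes Y$ via the $S$-linear isomorphism $\omega \smallboxtimes \mathbf{1}^{\smallboxtimes} \smallboxtimes \omega$, defines $B^\Omega$ by conjugating $d^\Omega$ through this isomorphism, and thereby obtains $(B^\Omega)^2 = 0$ for free. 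The remaining work is to expand the conjugate and recognise it as $B^A \odot \mathbf{1}^{\smallboxtimes} \odot \mathbf{1}^\otimes + \mathbf{1}^\otimes \odot b_{\bullet,\bullet} \odot \mathbf{1}^\otimes + \mathbf{1}^\otimes \odot \mathbf{1}^{\smallboxtimes} \odot B^A$ with $b_{\bullet,\bullet} = d + \omega^+ \smallboxtimes \mathbf{1}^{\smallboxtimes} - \mathbf{1}^{\smallboxtimes} \smallboxtimes \omega^+$; the passage from $d^\Omega$ to $d$ uses the identity $d^\Omega = d - \omega(1_\otimes)\smallboxtimes \mathbf{1}^{\smallboxtimes} + \mathbf{1}^{\smallboxtimes}\smallboxtimes\omega(1_\otimes)$, which simultaneously converts $\omega$ to $\omega^+$. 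Finally one checks that $U(A)\subseteq\Omega$ is carried to itself and that everything descends modulo $I$.

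What each approach buys: yours is self-contained and makes the individual cancellations transparent---in particular it explains \emph{why} the $D_1$/$\overline{D}_2$ split of the derivation is exactly what is needed, and your observation that $b_{k,k'}=0$ for $k,k'\geq 1$ collapses the mixed case to the trivial ``$L$ and $R$ commute'' is the clean way to organise it. The cost is that every cancellation carries its own Koszul sign, and you have several of them to track simultaneously (you also left implicit the Leibniz term $\omega(y)\smallboxtimes d(u)$ inside $d\circ L$, which cancels against $L\circ(\mathbf{1}^\otimes\odot d\odot\mathbf{1}^\otimes)$). The paper's conjugation trick packages all of those signs into a single isomorphism and a single square-zero check, at the price of introducing $\Omega$ and then having to unwind the conjugate to read off the formula. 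Your strict-unitality argument via the generators of $I$ is the same as the paper's implicit descent step.
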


\begin{proof}
We take a slightly indirect route in this proof.
Verification that these maps give $U_e(A)$ the structure of a bimodule amounts to 
checking $(B^{U_e(A)})^2 = 0$ for 
$$
B^{U_e(A)} = 
B^A \odot \mathbf{1}^{\smallboxtimes} \odot \mathbf{1}^\otimes
+
\mathbf{1}^\otimes \odot b_{\bullet, \bullet} \odot \mathbf{1}^\otimes
+
\mathbf{1}^\otimes \odot \mathbf{1}^{\smallboxtimes} \odot B^A.
$$
Rather than computing  $(B^{U_e(A)})^2$ directly, we identify 
$(A[1])^\otimes \odot U(A) \odot (A[1])^\otimes$  with a differential 
ideal in a dga $\Omega$ whose differential satisfies $(d^\Omega)^2 = 0$.
The remaining work is to show $B^{U(A)}$ (defined analogously to $B^{U_e(A)}$) agrees with $d^\Omega$
under this identification, and finally to check that $B^{U(A)}$
descends to $B^{U_e(A)}$.

Our dg-algebra  is $\Omega = (Y^{\smallboxtimes}, d^\Omega),$
where $Y = ((A[1])^{\otimes})[-1]$, and
the differential 
$d^\Omega$ defined in a way similar to $U(A)$ by prolonging $D^\Omega = D_1 + D_2$ with
\begin{itemize}
\item $D_1 = - \omega \circ B^A \circ \sigma $ as before, and 
\item $D_2 = -\omega^{\smallboxtimes 2} \circ \Delta \circ \sigma$.
\end{itemize}
Now, $(d^{\Omega})^2 = 0$ reduces to the fact $B^A$ is a coderivation for $\Delta.$

The first bimodule under consideration is $\Omega$ itself; later we will find that $U(A)$
is a sub-bimodule.  For $\Omega$, we consider  
  the map
$$
\omega \smallboxtimes \mathbf{1}^{\smallboxtimes} \smallboxtimes \omega \colon A[1]^\otimes \odot \Omega \odot A[1]^\otimes \to  Y \smallboxtimes \Omega \smallboxtimes Y.
$$
This is an $S$-linear isomorphism with inverse $- (\sigma  \odot \mathbf{1}^{\smallboxtimes} \odot \sigma)$.  To be sure about this, notice that the element $y \smallboxtimes y' = y \smallboxtimes 1_{\smallboxtimes} \smallboxtimes y \in Y \smallboxtimes \Omega \smallboxtimes Y$
maps to $-(-1)^{y}\sigma(y) \odot 1_{\smallboxtimes} \odot \sigma(y').$ 
The definition
$$
B^\Omega = - (\sigma  \odot \mathbf{1}^{\smallboxtimes} \odot \sigma) \circ d^\Omega \circ (\omega \smallboxtimes \mathbf{1}^{\smallboxtimes} \smallboxtimes \omega).
$$
guarantees $(B^\Omega)^2 = 0$.

Some work remains to show that $B^\Omega$ is of the appropriate form to define a bimodule.
Carefully expanding the definition of $B^\Omega$ leads to 
$$
B^\Omega = B^A \odot \mathbf{1}^{\smallboxtimes} \odot \mathbf{1}^\otimes + \mathbf{1}^\otimes \odot b_{\bullet, \bullet} \odot \mathbf{1}^\otimes + \mathbf{1}^\otimes \odot \mathbf{1}^{\smallboxtimes} \odot B^A.
$$
where $b_{\bullet, \bullet} = d + \omega^+ \smallboxtimes \mathbf{1}^{\smallboxtimes} - \mathbf{1}^{\smallboxtimes}  \smallboxtimes \omega^+$  Not coincidently, this is  same formula  as proposed for $U_e(A).$

To begin we first split $d^{\Omega}$ into its linear and quadratic parts:
$$
d^{\Omega} = \mathbf{1}^{\smallboxtimes} \smallboxtimes D_1 \smallboxtimes  \mathbf{1}^{\smallboxtimes} 
+  
\mathbf{1}^{\smallboxtimes} \smallboxtimes D_2 \smallboxtimes \mathbf{1}^{\smallboxtimes}.
$$
Restricted to $Y \smallboxtimes Y^{\smallboxtimes} \smallboxtimes Y = Y \smallboxtimes Y \oplus Y \smallboxtimes Y \smallboxtimes Y \oplus \dotsm$,
the first summand yields
$$
\begin{array}{rccl}
\mathbf{1}^{\smallboxtimes} \smallboxtimes D_1 \smallboxtimes  \mathbf{1}^{\smallboxtimes} 
& = & & D_1 \smallboxtimes \mathbf{1}^{\smallboxtimes} \\
& & + &  \mathbf{1}^\otimes \smallboxtimes \mathbf{1}^{\smallboxtimes} \smallboxtimes D_1 \smallboxtimes  \mathbf{1}^{\smallboxtimes} \smallboxtimes \mathbf{1}^\otimes \\
& & + & \mathbf{1}^{\smallboxtimes} \smallboxtimes D_1 .
\end{array} 
$$
So for $-(\sigma  \odot \mathbf{1}^{\smallboxtimes} \odot \sigma) \circ (\mathbf{1}^{\smallboxtimes} \smallboxtimes D_1 \smallboxtimes  \mathbf{1}^{\smallboxtimes} ) \circ (\omega \smallboxtimes \mathbf{1}^{\smallboxtimes} \smallboxtimes \omega)$
we have
$$
 (- \sigma \circ D_1 \circ \omega) \odot \mathbf{1}^{\smallboxtimes} \odot \mathbf{1}^\otimes
 +   \mathbf{1}^\otimes \odot \mathbf{1}^{\smallboxtimes} \smallboxtimes D_1 \smallboxtimes  \mathbf{1}^{\smallboxtimes} \odot \mathbf{1}^\otimes
+ \mathbf{1}^\otimes \odot \mathbf{1}^{\smallboxtimes} \odot (-\sigma \circ D_1 \circ \omega) 
$$
which equals
$$
B^A \odot \mathbf{1}^{\smallboxtimes} \odot \mathbf{1}^\otimes
 +   \mathbf{1}^\otimes \odot \mathbf{1}^{\smallboxtimes} \smallboxtimes D_1 \smallboxtimes  \mathbf{1}^{\smallboxtimes} \odot \mathbf{1}^\otimes 
+ \mathbf{1}^\otimes \odot \mathbf{1}^{\smallboxtimes} \odot B^A.
$$
So the question remains with the $D_1$ term.

The second summand expands slightly differently than the first.
Restricted to $Y \smallboxtimes Y^{\smallboxtimes} \smallboxtimes Y$  we can again organize the sum as
$$
\begin{array}{rccl}
\mathbf{1}^{\smallboxtimes} \smallboxtimes D_2 \smallboxtimes  \mathbf{1}^{\smallboxtimes} 
& = & & D_2 \smallboxtimes \mathbf{1}^{\smallboxtimes} \\
& & + &  \mathbf{1} \smallboxtimes \mathbf{1}^{\smallboxtimes} \smallboxtimes D_2 \smallboxtimes  \mathbf{1}^{\smallboxtimes} \smallboxtimes \mathbf{1} \\
& & + & \mathbf{1}^{\smallboxtimes} \smallboxtimes D_2 .
\end{array} 
$$
For $-(\sigma  \odot \mathbf{1}^{\smallboxtimes} \odot \sigma) \circ (\mathbf{1}^{\smallboxtimes} \smallboxtimes D_2 \smallboxtimes  \mathbf{1}^{\smallboxtimes} ) \circ (\omega \smallboxtimes \mathbf{1}^{\smallboxtimes} \smallboxtimes \omega)$
the expansion is more subtle.  The middle term is simply
$$
\mathbf{1}^\otimes \odot \mathbf{1}^{\smallboxtimes} \smallboxtimes D_2 \smallboxtimes  \mathbf{1}^{\smallboxtimes} \odot \mathbf{1}^\otimes,
$$
but we will have to come back to it after expanding the ends.
The ends become
$$
- ((\sigma \odot \mathbf{1}^{\smallboxtimes}) \circ D_2 \circ \omega) \smallboxtimes \mathbf{1}^{\smallboxtimes} \odot \mathbf{1}^{\otimes}
- \mathbf{1}^{\otimes} \odot \mathbf{1}^{\smallboxtimes} \smallboxtimes ((\mathbf{1}^{\smallboxtimes} \odot \sigma) \circ D_2 \circ \omega)
$$
and then 
$$
 (\mathbf{1}^\otimes \odot \omega) \smallboxtimes \mathbf{1}^{\smallboxtimes} \odot \mathbf{1}^{\otimes}
- \mathbf{1}^{\otimes} \odot \mathbf{1}^{\smallboxtimes} \smallboxtimes (\omega \odot \mathbf{1}^{\otimes})
$$
where $\omega \colon A[1]^\otimes \to Y$.

These computations prove the bimodule structure on $\Omega$ with
$$
b_{\bullet , \bullet} = d^\Omega + \omega {\smallboxtimes} \mathbf{1}^{\smallboxtimes} - \mathbf{1}^{\smallboxtimes} {\smallboxtimes} \omega.
$$
The final formulas come from the identity
$$
d^\Omega = d - \omega(1_\otimes) \smallboxtimes \mathbf{1}^{\smallboxtimes} +  \mathbf{1}^{\smallboxtimes} \smallboxtimes \omega(1_\otimes)
$$
whose substitution into this expression leaves us with what we want:
$$
b_{\bullet , \bullet} = d + \omega^+ {\smallboxtimes} \mathbf{1}^{\smallboxtimes} - \mathbf{1}^{\smallboxtimes} {\smallboxtimes} \omega^+.
$$

In this form, we know that  $U(A) \subseteq \Omega$ is carried to itself under $B^\Omega$.  The algebra $U_e(A)$ is a quotient of $U(A)$ by 
an ideal.  Both $d$ and multiplication by elements of $\Omega$ make sense on this quotient so $B^{\Omega}$ descends with the same formula.
\end{proof}

\begin{thm}{\bf (module adjunctions)}
 \label{theorem-definition-of-Q}
  \label{theorem-universal-Q}
The functor 
$$
Q_A = - \overset{\infty}{\otimes} U_e(A) \colon 
   \operatorname{Mod}_\infty(A)  
\to
  \operatorname{Mod}_\infty(A).
$$
induces a $dg$-adjunction
$$
(- \circ \lambda_\bullet) \colon \operatorname{Mod}_\text{dg}(U_e(A))(Q_A(-), -)  \stackrel{\sim}{\Longrightarrow}  \operatorname{Mod}_\infty(A)(-, -)
$$
and consequently an adjunction
$$
 \operatorname{Mod}_\infty^\text{st}(A)(Q_A(-), -) \stackrel{\sim}{\Longrightarrow}
Z^0(\operatorname{Mod}_\infty(A)(-, -)).
$$
\end{thm}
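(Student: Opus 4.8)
The plan is to recognize $Q_A$ as extension of scalars along $S \to U_e(A)$, so that the asserted adjunction is the $\mathbf{A}_\infty$-enriched incarnation of the ordinary extension--restriction adjunction for modules. Write $P = M \odot A[1]^\otimes$. Then $Q_A(M) = M \odot A[1]^\otimes \odot U_e(A) = P \odot U_e(A)$, and as a \emph{graded} right $U_e(A)$-module this is nothing but the induced module $P \otimes_S U_e(A)$: in the formula $b^{M \overset{\infty}{\otimes} U_e(A)} = B^M \odot \mathbf{1} + \mathbf{1} \odot \mathbf{1}^\otimes \odot b^V_{\bullet, \bullet}$ of Proposition \ref{proposition-infinity-tensor}, combined with the bimodule structure of Lemma \ref{lemma-UA-bimod}, the right $A$-inputs are implemented purely through the right shift-multiplication slot $-\,\mathbf{1}^{\smallboxtimes} \smallboxtimes \omega^+$, so the $U_e(A)$-module structure furnished by Theorem \ref{theorem-isomorphism-of-A-and-UeA-modules} is right multiplication on the $U_e(A)$-tensor factor. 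The unit is $\lambda_\bullet \colon M \to Q_A(M)$, $z \mapsto z \odot 1_{\smallboxtimes}$, and the adjunction map is precomposition $(- \circ \lambda_\bullet)$.

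First I would establish the bijection at the level of graded modules. Since $Q_A(M) = P \otimes_S U_e(A)$ is induced, a $U_e(A)$-linear map $g \colon Q_A(M) \to N$ of any degree is determined by its restriction to $P \odot 1_{\smallboxtimes}$, and conversely any $f_\bullet \in \operatorname{Hom}_S(P, N) = \operatorname{Mod}_\infty(A)(M, N)$ extends uniquely to the $U_e(A)$-linear map $\hat f(p \odot u) = \pm\, f_\bullet(p) \cdot u$. Thus $g \mapsto g \circ \lambda_\bullet$ and $f_\bullet \mapsto \hat f$ are mutually inverse $S$-linear bijections
$$
\operatorname{Mod}_\text{dg}(U_e(A))(Q_A(M), N) = \operatorname{Hom}_{U_e(A)}(Q_A(M), N) \ \cong\ \operatorname{Hom}_S(P, N) = \operatorname{Mod}_\infty(A)(M, N),
$$
which is exactly the ordinary extension--restriction adjunction; no differentials are involved yet.

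The substance of the theorem is that this graded bijection is an isomorphism of \emph{complexes}, i.e. that it intertwines the differential $[d, -]$ on $\operatorname{Hom}_{U_e(A)}(Q_A(M), N)$ with $\delta$ on $\operatorname{Mod}_\infty(A)(M, N)$. I would deduce this from two facts: (i) the linearization $i$, sending a $U_e(A)$-linear map $g$ to the linear $\mathbf{A}_\infty$-morphism $i(g) \colon Q_A(M) \to N$ over $A$, is a dg-functor, so $\delta(i(g)) = i([d,g])$ --- this is the all-degrees version of the unrolling already carried out for closed degree-$0$ maps in Theorem \ref{theorem-isomorphism-of-A-and-UeA-modules}, and is recorded by the defining identity $\delta \phi_\bullet = -\omega \circ [d, \varphi] \circ \sigma$ for linear morphisms; and (ii) the unit $\lambda_\bullet$ is closed, $\delta \lambda_\bullet = 0$. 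Granting these, for $g$ of degree $|g|$ one computes in $\operatorname{Mod}_\infty(A)$ that
$$
\delta(g \circ \lambda_\bullet) = \delta(i(g)) \circ \lambda_\bullet \pm i(g) \circ \delta \lambda_\bullet = i([d,g]) \circ \lambda_\bullet = [d,g] \circ \lambda_\bullet,
$$
the required chain-map identity. I expect step (ii) to be the main obstacle: verifying $\delta \lambda_\bullet = 0$ is the one genuinely computational point, since the differential on $Q_A(M)$ is twisted by $B^M$ and by the interior ($d$ and left shift-multiplication) parts of the bimodule, and one must check against Proposition \ref{proposition-definition-of-mod-infinity-A} that appending $1_{\smallboxtimes}$ commutes with these on the nose --- a cross-cancel and sign check of the kind used throughout the paper, in the spirit of the crossing count accompanying Lemma \ref{lemma-UA-bimod}.

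Finally I would treat naturality and the corollary. Naturality of $\lambda_\bullet$ in both variables promotes $(- \circ \lambda_\bullet)$ to a natural isomorphism, hence a dg-adjunction with $Q_A$ left adjoint to the inclusion $\operatorname{Mod}_\text{dg}(U_e(A)) \hookrightarrow \operatorname{Mod}_\infty(A)$. For the stated consequence I pass to closed degree-zero cycles: a chain isomorphism restricts to an isomorphism on $Z^0$, so $Z^0\big(\operatorname{Mod}_\text{dg}(U_e(A))(Q_A(M), N)\big) \cong Z^0\big(\operatorname{Mod}_\infty(A)(M, N)\big)$, and the left-hand side is $\operatorname{Mod}_\text{dg}^\text{st}(U_e(A))(Q_A(M), N) \cong \operatorname{Mod}_\infty^\text{st}(A)(Q_A(M), N)$ by the strict-morphism identification of Theorem \ref{theorem-isomorphism-of-A-and-UeA-modules}. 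This yields the second displayed isomorphism.
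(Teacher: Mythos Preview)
Your proposal is correct and follows essentially the same route as the paper. Both arguments identify $\operatorname{Mod}_\text{dg}(U_e(A))(Q_A(M),N)$ with $\operatorname{Mod}_\infty(A)(M,N)$ at the level of graded $S$-modules via $(- \circ \lambda_\bullet)$, and both isolate the closure of $\lambda_\bullet$ as the one substantive computation; the paper carries out this cancellation explicitly (terms with $b$ separated from $1_{\smallboxtimes}$ cancel in pairs, and the remaining $\omega^+$-terms that ``eat'' $1_{\smallboxtimes}$ cancel against each other), while you correctly flag it as the main obstacle and describe the method without writing out the terms. Your extension--restriction framing and the factorization through the dg-functor $i$ make the logical structure a bit cleaner than the paper's more direct statement that ``agreement as complexes follows from the closure of $\lambda_\bullet$,'' but the content is the same.
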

\begin{proof}
This is almost an immediate consequence of Lemma \ref{lemma-UA-bimod}.  
It becomes obvious with the notation
$$
\lambda_\bullet \colon M \odot A[1]^\otimes \to  M \odot A[1]^\otimes \odot U_e(A) 
$$
identifying $M \odot A[1]^\otimes \to  M \odot A[1]^\otimes \odot 1_{\smallboxtimes}$.
The $S$-modules $
\operatorname{Mod}_\text{dg}(U_e(A))(Q_A(M), N)  
$ 
and 
$\operatorname{Mod}_\infty(A)(M, N)$
are identified this way.  

Agreement as complexes follows from the closure of  $\lambda_\bullet,$
and this requires a computation to check the vanishing of 
the map 
$$
M \odot A[1]^{\otimes} \to M \odot A[1]^\otimes \odot U_e(A) \odot A[1]^\otimes
$$
given by 
$$
B^{Q_A(M)} \circ (\lambda_\bullet \odot \mathbf{1}^\otimes) - (\lambda_\bullet \odot \mathbf{1}^\otimes)  \circ B^M.
$$
Notice that $\lambda_\bullet \odot \mathbf{1}^{\otimes}$ has the effect of inserting a ``$\odot 1_{\smallboxtimes} \odot$'' into all possible spots.
In the expression $(\lambda_\bullet \odot \mathbf{1}^\otimes)  \circ B^M$ this can happen to the right or left of the $b$ that appears.
Similarly, $B^{Q_A(M)} \circ (\lambda_\bullet \odot \mathbf{1}^\otimes)$ has terms with ``$\odot 1_{\smallboxtimes} \odot$'' on either the right or left of the $b,$
and also terms where the ``$\odot 1_{\smallboxtimes} \odot$''  was ``eaten'' by $B^{Q_A(M)}$.  

The $b$-separating terms cancel, and we are left with the terms 
which eat ``$\odot 1_{\smallboxtimes} \odot$''. 
Since $d(1_{\smallboxtimes}) = 0,$ the relevant operator is the
$$
 \omega^+ \smallboxtimes \mathbf{1}^{\smallboxtimes 0} - \mathbf{1}^{\smallboxtimes 0}  \smallboxtimes \omega^+.
$$
part of $$
b_{\bullet , \bullet} = d + \omega^+ {\smallboxtimes} \mathbf{1}^{\smallboxtimes} - \mathbf{1}^{\smallboxtimes} {\smallboxtimes} \omega^+.
$$
Finally, a term of the form 
$$
m \odot \sigma(a_1) \otimes \dotsm \otimes \sigma(a_k) \odot \omega[\sigma(a_{k+1}) \otimes \dotsm \otimes \sigma(a_l)] \odot \sigma(a_{l+1}) \otimes \dotsm \otimes \sigma(a_n)
$$
can arise in two ways with opposite signs depending on whether the original element was
$$
m \odot \sigma(a_1) \otimes \dotsm \otimes \sigma(a_k) \odot 1_{\smallboxtimes} \odot \sigma(a_{k+1}) \otimes \dotsm \otimes \sigma(a_l) \otimes\sigma(a_{l+1}) \otimes \dotsm \otimes \sigma(a_n)
$$
or
$$
-m \odot \sigma(a_1) \otimes \dotsm \otimes \sigma(a_k) \odot \sigma(a_{k+1}) \otimes \dotsm \otimes \sigma(a_l) \odot 1_{\smallboxtimes} \odot \sigma(a_{l+1}) \otimes \dotsm \otimes \sigma(a_n).
$$ 
\end{proof}

\begin{thm}{\bf (the quasi-equivalence $Q_A \sim \mathbf{1}$)}
\label{theorem-Q-1-quasi-equivalence}
On the category $H^0(\operatorname{Mod_\infty}(A))$, 
the functor $Q_A$ is naturally isomorphic to the identity functor  $\mathbf{1}$.
This natural isomorphism is realized by $\lambda_\bullet$ and an inverse natural transformation 
$$\epsilon_\bullet \colon Q_A \Rightarrow \mathbf{1}. $$ 
For a given module,  $\epsilon_\bullet$ is the image of 
 the identity map $M \to M$
under the adjunction isomorphism of Theorem \ref{theorem-universal-Q}.
\end{thm}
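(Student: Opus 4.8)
The plan is to promote $\lambda_\bullet$ to the asserted natural isomorphism and to realize $\epsilon_\bullet$ as its pointwise inverse in $H^0(\operatorname{Mod}_\infty(A))$, so that the only substantive computation is a single contracting homotopy. First I would observe that $\lambda_\bullet$ is a natural transformation $\mathbf{1} \Rightarrow Q_A$. It is a closed degree-$0$ morphism, since this is exactly the vanishing established in the course of proving Theorem \ref{theorem-definition-of-Q}, and its naturality square against any $\phi_\bullet \colon M \to M'$ commutes on the nose because $Q_A(\phi_\bullet) = \phi_\bullet \odot \mathbf{1}^\otimes \odot \mathbf{1}$ merely transports the inserted symbol $1_{\smallboxtimes}$ along $\phi_\bullet$. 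Next, I would use that every object of $\operatorname{Mod}_\infty(A)$ is canonically a $U_e(A)$-module (Theorem \ref{theorem-isomorphism-of-A-and-UeA-modules}), so that $M$ is a legitimate value of the second variable in the dg-adjunction of Theorem \ref{theorem-universal-Q}. I then define $\epsilon_\bullet$ to be the preimage of $\mathbf{1}_M$ under the isomorphism of complexes $(- \circ \lambda_\bullet)$ with source $\operatorname{Mod}_\text{dg}(U_e(A))(Q_A(M), M)$ and target $\operatorname{Mod}_\infty(A)(M, M)$. Because this map is an isomorphism of complexes sending a closed element to a closed element, $\epsilon_\bullet$ is a closed degree-$0$ morphism; since it is the image of a $U_e(A)$-module map it is moreover strict; and by its very construction it satisfies, on the nose,
$$
\epsilon_\bullet \circ \lambda_\bullet = \mathbf{1}_M.
$$
Passing to $H^0$ this gives one of the two needed identities, $[\epsilon_\bullet] \circ [\lambda_\bullet] = \mathbf{1}$.

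The remaining and only genuine work is to show that the opposite composite is homotopic to the identity, i.e. to produce an explicit degree-$(-1)$ endomorphism $h$ of $Q_A(M) = M \odot A[1]^\otimes \odot U_e(A)$ with
$$
\delta h = \lambda_\bullet \circ \epsilon_\bullet - \mathbf{1}_{Q_A(M)}.
$$
Since $\epsilon_\bullet$ is strict, one computes that $\lambda_\bullet \circ \epsilon_\bullet$ is supported on those summands whose middle tensor factor is empty, collapsing the $U_e(A)$-factor into $M$ and reinserting $1_{\smallboxtimes}$; the difference with $\mathbf{1}_{Q_A(M)}$ is therefore exactly a telescoping defect. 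Modeled on the contracting homotopy of the two-sided bar resolution, I would take $h$ to transfer tensor factors across the $\odot$ separating the middle $A[1]^\otimes$ from $U_e(A)$, summing over all ways of re-blocking a run of $\sigma(a_i)$'s into a new leading $\smallboxtimes$-slot, e.g. schematically
$$
h\big(m \odot \sigma(a_1) \otimes \dotsm \otimes \sigma(a_k) \odot u\big) = \sum_{j} \pm\, m \odot \sigma(a_1) \otimes \dotsm \otimes \sigma(a_j) \odot \omega[\sigma(a_{j+1}) \otimes \dotsm \otimes \sigma(a_k)] \smallboxtimes u.
$$
When $\delta h$ is expanded using the bimodule differential of Lemma \ref{lemma-UA-bimod}, the shift-multiplication terms $\omega^+ \smallboxtimes \mathbf{1}^{\smallboxtimes}$ cancel $h$ applied to the bar differential in a telescoping fashion, the interior $B^M$- and $d$-contributions cancel against one another, and precisely the two boundary terms survive, namely $\lambda_\bullet \circ \epsilon_\bullet$ and $\mathbf{1}_{Q_A(M)}$. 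I expect this term-by-term matching to be the main obstacle: keeping the Koszul signs straight (best done by the crossing count of the string-diagram calculus) and checking that $h$ is well defined modulo the ideal $I$ defining $U_e(A)$ from $U(A)$.

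Finally, the two identities $\epsilon_\bullet \circ \lambda_\bullet = \mathbf{1}_M$ and $[\lambda_\bullet \circ \epsilon_\bullet] = \mathbf{1}_{Q_A(M)}$ show that $[\lambda_\bullet]$ is an isomorphism in $H^0(\operatorname{Mod}_\infty(A))$ with inverse $[\epsilon_\bullet]$, for every $M$. As $\lambda_\bullet$ was already seen to be natural, $[\lambda_\bullet]$ is a natural isomorphism $\mathbf{1} \Rightarrow Q_A$ of endofunctors of $H^0(\operatorname{Mod}_\infty(A))$, and its pointwise inverse $[\epsilon_\bullet]$ is then automatically a natural transformation $Q_A \Rightarrow \mathbf{1}$, as claimed.
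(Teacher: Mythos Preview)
Your overall strategy coincides with the paper's: show $\epsilon_\bullet \circ \lambda_\bullet = \mathbf{1}_M$ on the nose via the adjunction, then produce an explicit degree $-1$ endomorphism whose boundary is $\mathbf{1}_{Q_A(M)} - \lambda_\bullet \circ \epsilon_\bullet$. The gap is in the homotopy itself. Your $h$ moves tensor factors \emph{from} the middle $A[1]^\otimes$ slot \emph{into} the leading $\smallboxtimes$-slot of $U_e(A)$. But consider any element with empty middle slot, say $m \odot 1_\otimes \odot \chi \odot \alpha$: your $h$ vanishes on it, and every term of $B^{Q_A(M)}$ either preserves $\alpha_1 = 1_\otimes$ or (in the curved case) inserts a single curvature element, so $\delta h$ is essentially zero there as well. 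Yet on exactly these elements $\mathbf{1} - \Lambda E$ is nonzero and contains the crucial term $m \cdot \chi \odot \alpha' \odot 1_\smallboxtimes \odot \alpha''$ in which $\chi$ has been \emph{multiplied into} $M$. Your homotopy never performs this $U_e(A)$-action on $M$, so it cannot produce these terms.

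The paper's homotopy runs in the opposite direction: it first projects onto the $\alpha_1 = 1_\otimes$ summand and then marches the $\smallboxtimes$-factors of $\chi = x_1 \smallboxtimes \dotsm \smallboxtimes x_k$ leftward, placing $\sigma(x_i)$ in the middle $A[1]^\otimes$ slot and simultaneously absorbing $x_1 \smallboxtimes \dotsm \smallboxtimes x_{i-1}$ into $M$ via the module action,
\[
H(m \odot 1_\otimes \odot x_1 \smallboxtimes \dotsm \smallboxtimes x_k \odot \alpha) = \sum_i \pm\, m\cdot(x_1\smallboxtimes\dotsm\smallboxtimes x_{i-1}) \odot \sigma(x_i) \odot x_{i+1}\smallboxtimes\dotsm\smallboxtimes x_k \odot \alpha.
\]
It is precisely the built-in multiplication $m \mapsto m\cdot(x_1\smallboxtimes\dotsm\smallboxtimes x_{i-1})$ that allows the boundary $BH+HB$ to reproduce the $\Lambda E$ terms; the identity term arises from $\omega^+$ re-attaching $\sigma(x_1)$ to the left of $\chi$. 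The verification then proceeds by a six-type case analysis on what sits in the middle slot after applying $B$. Your telescoping picture is the right intuition, but the telescope must collapse \emph{toward} $M$, not toward $U_e(A)$.
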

\begin{proof}
The statement requires that we show that the compositions $(\epsilon_\bullet \odot \mathbf{1}^\otimes)\circ (\lambda_\bullet \odot \mathbf{1}^\otimes)$ and $(\lambda_\bullet \odot \mathbf{1}^\otimes)\circ (\epsilon_\bullet \odot \mathbf{1}^\otimes)$ differ from the identity by a boundary.  In the first case, the definition of $\epsilon_\bullet$ via the adjunction guarantees the composition equals the identity exactly.  The second case is not so simple.  For this we introduce a homotopy $H$ and consider the equality
$$
BH + HB = \mathbf{1} - \Lambda E
$$ 
where $B = B^{Q_A}$, $\Lambda = (\lambda_\bullet \odot \mathbf{1}^\otimes)$, and $E =  (\epsilon_\bullet \odot \mathbf{1}^\otimes).$  Verification of this identity splits into two cases depending on the argument 
of these operators, and the analysis of the second of these cases requires systematic classification of the kind of terms output from $BH+HB.$

To understand 
the difference
$$
\mathbf{1} - \Lambda E 
\quad 
\colon 
\quad 
Q_A(M) \to Q_A(M)
$$
we must describe concretely the composition $\Lambda E.$
The map $\epsilon_\bullet \odot \mathbf{1}^\otimes $ first projects 
$$
M \odot A[1]^\otimes \odot U_e(A) \odot A[1]^\otimes \to M \odot 1_\otimes \odot U_e(A) \odot A[1]^\otimes 
$$
and then multiplies $U_e(A)$ against $M$:
$$
M \odot 1_\otimes \odot U_e(A) \odot A[1]^\otimes 
\to
M  \odot A[1]^\otimes.
$$
The right-hand $A[1]^\otimes$ just comes along for the ride.
Following $\epsilon_\bullet \odot \mathbf{1}^{\otimes}$ by $\lambda_\bullet \odot \mathbf{1}^{\otimes}$
has the total effect on an element\footnote{For the remainder of the proof, we write Greek letters for monomials: e.g. $\chi = x_{1} \smallboxtimes \dotsm \smallboxtimes x_k$ and
$\alpha = a_{1} \otimes \dotsm \otimes a_\ell$.  In addition, we add $'$ and $''$ to indicate splittings.} $m \odot  1_\otimes \odot \chi \odot \alpha$  in $M \odot 1_\otimes \odot U_e(A) \odot A[1]^\otimes$
of
$$
\Lambda E(m \odot  1_\otimes \odot \chi \odot \alpha) = 
\sum_{\alpha' \otimes \alpha'' = \alpha} m \cdot \chi \odot  \alpha' \odot 1_{\smallboxtimes} \odot \alpha''.
$$

With this description, the problem becomes to find a homotopy $H$ such that
the output of $BH+HB$ on an element $m \odot \alpha_1 \odot \chi \odot \alpha_2$ is
\begin{itemize}
\item $m \odot  \alpha_1 \odot \chi \odot \alpha_2 $ \quad when $\alpha_1  \neq 1_\otimes$, and
\item $m \odot  1_\otimes \odot \chi \odot \alpha  \quad - \quad \sum\limits_{\substack{\alpha' \neq 1_\otimes \\ \alpha' \otimes \alpha'' = \alpha}} \ m \cdot u \odot  \alpha' \odot 1_{\smallboxtimes}\odot \alpha''$
when $\alpha_1 = 1_\otimes$ and $\alpha_2 = \alpha.$
\end{itemize}
To this end, we set 
$$
H = h_\bullet \odot \mathbf{1}^\otimes \colon M \odot A[1]^\otimes \odot U_e(A) \odot A[1]^\otimes \to M \odot A[1]^\otimes \odot U_e(A) \odot A[1]^\otimes
$$
given by first projecting onto 
$M \odot 1_\otimes  \odot U_e(A)   \odot A[1]^\otimes$,  then marching  the variables of a monomial in $U_e(A)$ through and across the left $A[1]^\otimes$-slot.  For example,
$$
H(m \odot 1_\otimes \odot x_1 {\smallboxtimes}\dotsm {\smallboxtimes}x_k \odot \alpha)
=
\sum_{i = 1}^\ell (-1)^{|m| + |x_1| + \dotsm + |x_{i-1}|}  \ m \cdot (x_1 {\smallboxtimes}\dotsm \smallboxtimes x_{i-1}) \odot \sigma(x_i) \odot x_{i+1} {\smallboxtimes}\dotsm \smallboxtimes x_{\ell}  \odot \alpha.
$$
This leads to the expression
$$
H= \mathbf{1} \cdot  \mathbf{1}^{\smallboxtimes} \odot \sigma \odot \mathbf{1}^{\smallboxtimes} \odot \mathbf{1}^\otimes.
$$

With a view toward  the two cases above,  we feed elements through the 
boundary of $H$:
$$
BH+ HB.
$$
The case of an element $m \odot  \alpha' \odot \chi \odot \alpha'' $  when $\alpha' \neq 1_\otimes$ can be disposed of quickly.  The left-hand summand of the boundary 
annihilates this element, and the only terms output from  $B$ not annihilated by $H$
are
$$
\begin{array}{c}
b_\bullet^M(m \otimes \alpha') \odot 1_\otimes \odot  \chi \odot \alpha'' + 
(-1)^m m \odot  1_\otimes  \odot \omega[\alpha'] \smallboxtimes \chi \odot \alpha'' = \\
-(-1)^m m \cdot  \omega[\alpha'] 
\odot 1_\otimes \odot  \chi \odot \alpha'' + 
(-1)^m m \odot  1_\otimes  \odot \omega[\alpha'] \smallboxtimes \chi \odot \alpha''  = \\
(-1)^m (m \odot  1_\otimes  \odot \omega[\alpha'] \smallboxtimes \chi \odot \alpha'' 
- m \cdot  \omega[\alpha'] 
\odot 1_\otimes \odot  \chi \odot \alpha'' )
\end{array}
$$
Plugging this into $h\odot \mathbf{1}^\otimes$ yields
$$
\begin{array}{c}
m \odot  \alpha' \odot \chi \odot \alpha'' + (-1)^m  (h \odot \mathbf{1}^\otimes) (m \cdot \omega[\alpha'] \odot  1_\otimes  \odot \omega[\alpha']  \chi \odot \alpha'' 
- m \cdot  \omega[\alpha'] 
\odot 1_\otimes \odot  \chi \odot \alpha'' )  \\
 = m \odot  \alpha' \odot \chi \odot \alpha'' 
\end{array}
$$
by the identity 
$$
(h \odot \mathbf{1}^\otimes)(m \odot 1_\otimes \odot x \smallboxtimes \chi \odot \alpha) = (-1)^m (m \odot \sigma(x) \odot \chi \odot \alpha) + (h \odot \mathbf{1}^\otimes)(m \cdot x \odot 1_\otimes \odot  \chi \odot \alpha).
$$

{\bf Classification by 6 types.}  
Analysis of the output of arguments which have a  $1_\otimes$ in the left-hand $A[1]^\otimes$-slot 
requires splitting the output into cases.
The terms which appear
fall into six distinct basic types .  These types can be identified by what lies between 
the two leftmost $\odot$ symbols.  The labels of the classification are:
$$
\odot 1_\otimes \odot, \quad \odot \sigma[x_i] \odot, \quad \odot B^A(\sigma[x_i]) \odot, \quad \odot \sigma[x_i'] \odot, \quad \odot \sigma[x_i''] \odot, \quad \text{ and }\quad \odot \alpha' \odot.
$$
With the convention that $x_i' \smallboxtimes x_i''$ is a term in $\overline{D}_2(x_i),$ and $\alpha = \alpha' \otimes \alpha''$ where $\alpha' \neq 1_\otimes,$ a moment's reflection verifies these types partition the  output terms. 

{\bf The type $\odot 1_\otimes \odot$.} A term of the type  $\odot 1_\otimes \odot$ can only be output by $BH$, and all cancel except the term 
$$
 m \odot 1_\otimes \odot x_{1} \smallboxtimes \dotsm \smallboxtimes x_{k} \odot \alpha. 
$$ 
This term is output only by $\omega^+$ evaluated on $m \odot \sigma[x_{1}]  \odot x_2 \smallboxtimes \dotsm \smallboxtimes x_{k} \odot \alpha$.  Cancellation of the other terms happens because they can
arise either
under the action of $b^M_\bullet$
$$
m \cdot (x_1 \smallboxtimes \dotsm \smallboxtimes x_{i-1}) \odot \sigma[x_i] \odot x_{i+1} \smallboxtimes \dotsm \smallboxtimes x_{k} \odot \alpha \mapsto 
\pm m \cdot (x_1 \smallboxtimes \dotsm \smallboxtimes x_{i-1} \smallboxtimes x_i) \odot 1_\odot \odot x_{i+1} \smallboxtimes \dotsm \smallboxtimes x_{k} \odot \alpha 
$$
or under the action of $\omega^+$
$$
m \cdot (x_1 \smallboxtimes \dotsm \smallboxtimes x_{i}) \odot \sigma[x_{i+1}] \odot x_{i+2} \smallboxtimes \dotsm \smallboxtimes x_{k} \odot \alpha \mapsto 
\pm m \cdot (x_1 \smallboxtimes \dotsm \smallboxtimes x_{i}) \odot 1_\odot \odot x_{i+1} \smallboxtimes x_{i+2} \smallboxtimes \dotsm \smallboxtimes x_{k} \odot \alpha .
$$
These two terms appear with opposite signs because  $x_i$ is multiplied against the module term with $b^M_\bullet \circ \sigma$ in the first case and $\cdot  =  -b^M_\bullet \circ \sigma$
in the second.

{\bf The type $\odot \sigma[x_i] \odot$.} All of these terms cancel, but for two different reasons, so we must split this type into two sub-types: 
\begin{itemize}
\item $m \cdot (x_1 \smallboxtimes \dotsm \smallboxtimes x_{i-1}) \odot \sigma[x_i] \odot \dotsm$  \quad whose left segment is unaffected by $B$, and
\item $\dotsm \odot \sigma[x_i]  \odot x_{i+1} \smallboxtimes \dotsm \smallboxtimes x_{k} \odot \alpha$ \quad  \ \ whose right segment is unaffected by $B$. 
\end{itemize}
The first subtype cancels because one is simply putting $B$ on the right segment either before or after $H$ moves $\sigma[x_i]$ between the $\odot$'s, so 
both $BH$ and $HB$ produce these terms and with opposite signs.
The second subtype has one contribution from $HB$:
$$
\pm d(m \cdot (x_1 \smallboxtimes \dotsm \smallboxtimes x_{i-1})) \odot \sigma[x_i] \odot x_{i+1} \smallboxtimes \dotsm \smallboxtimes x_{k} \odot \alpha
$$
the contribution from $HB$ is
$$
\pm d(m) \cdot (x_1 \smallboxtimes \dotsm \smallboxtimes x_{i-1}) \odot \sigma[x_i] \odot x_{i+1} \smallboxtimes \dotsm \smallboxtimes x_{k} \odot \alpha
$$
and terms of the form
$$
\pm m \cdot (x_1 \smallboxtimes \dotsm \smallboxtimes  d(x_{i'}) \smallboxtimes \dotsm \smallboxtimes  x_{i-1}) \odot \sigma[x_i] \odot x_{i+1} \smallboxtimes \dotsm \smallboxtimes x_{k} \odot \alpha.
$$
Considered together, the fact that $d$ is a derivation guarantees all these terms cancel, and one can check the signs most easily by drawing the string diagram for the operators
\begin{itemize}
\item $(d \odot \mathbf{1}^{\otimes} \odot \mathbf{1}^{\smallboxtimes}  \odot \mathbf{1}^{\otimes}) ( \mathbf{1} \cdot  \mathbf{1}^{\smallboxtimes} \odot \sigma \odot \mathbf{1}^{\smallboxtimes} \odot \mathbf{1}^\otimes)$,
\item $(\mathbf{1} \cdot  \mathbf{1}^{\smallboxtimes} \odot \sigma \odot \mathbf{1}^{\smallboxtimes} \odot \mathbf{1}^\otimes) (d \odot \mathbf{1}^{\otimes} \odot \mathbf{1}^{\smallboxtimes}  \odot \mathbf{1}^{\otimes})$, and 
\item $( \mathbf{1} \cdot  \mathbf{1}^{\smallboxtimes} \odot \sigma \odot \mathbf{1}^{\smallboxtimes} \odot \mathbf{1}^\otimes) (\mathbf{1} \odot \mathbf{1}^{\otimes} \odot d \odot \mathbf{1}^{\otimes}) $
\end{itemize}
where the only sign happens when $d$ passes $\sigma$ in the second two cases.

{\bf The type $\odot B^A(\sigma[x_i]) \odot$.} These terms cancel away in  essentially the same way as those of  type $\odot \sigma[x_i] \odot$ for which $B$ is on the right segment.  In brief, $B^A$
is put onto $x_i$ either before or after $H$ moves it between the $\odot$'s.

{\bf The type $\odot \sigma[x_i'] \odot$.} A term of this type is produced by $BH$ and $HB$, and cancels.  In the $BH$ case, $H$ moves $x_i$ between the  $\odot$'s and then $\omega^+$ splits it.
For $HB$,  $\overline{D}_2$  splits $x_i$ and then $H$ moves $x_i'$ between the  $\odot$'s. For signs, the relevant string diagrams begin from 
\begin{itemize}
\item $(\mathbf{1} \odot \mathbf{1}^\otimes  \odot \omega^+ \smallboxtimes \mathbf{1}^{\smallboxtimes} \odot \mathbf{1}^\otimes)(\mathbf{1} \cdot  \mathbf{1}^{\smallboxtimes} \odot \sigma \odot \mathbf{1}^{\smallboxtimes} \odot \mathbf{1}^\otimes)$ and
\item $(\mathbf{1} \cdot  \mathbf{1}^{\smallboxtimes} \odot \sigma \odot \mathbf{1}^{\smallboxtimes} \odot \mathbf{1}^\otimes)(\mathbf{1} \odot \mathbf{1}^\otimes \odot \mathbf{1}^{\smallboxtimes} \smallboxtimes (-\omega^{\smallboxtimes 2} \circ \overline{\Delta} \circ \sigma) \smallboxtimes \mathbf{1}^{\smallboxtimes} \odot \mathbf{1}^\otimes)$
\end{itemize}
and end at 
$$
\mathbf{1} \cdot \mathbf{1}^{\smallboxtimes} \odot (\mathbf{1}^\otimes \odot \omega^+) \sigma \smallboxtimes \mathbf{1}^{\smallboxtimes} \odot \mathbf{1}^{\otimes}
= \mathbf{1} \cdot  \mathbf{1}^{\smallboxtimes} \odot \sigma (\omega\odot \omega \circ \overline{\Delta} \circ \sigma) \smallboxtimes \mathbf{1}^{\smallboxtimes} \odot \mathbf{1}^{\otimes}.$$
Neither have any crossings, so the cancellation comes from the sign on $-\omega^{\smallboxtimes 2}$ in the $HB$ term.

{\bf The type $\odot \sigma[x_i''] \odot$.} This type is quite similar to $\odot \sigma[x_i'] \odot$, and we have cancellation again.  As before such a term 
 is produced by $BH$ and $HB$. 
 The main difference is in the $BH$ case where again $H$ moves $x_i$ between the  $\odot$'s but now  $b^M_\bullet$ splits it.
The $HB$ term is the same in that $\overline{D}_2$ splits $x_i$ and then $H$ moves $x_i''$ between the  $\odot$'s.  The only difference being that $H$ moves past $x_i'$ this time.
This means that the string diagrams begin from
\begin{itemize}
\item $(b^M(\mathbf{1} \cdot \mathbf{1}^{\smallboxtimes} \otimes \mathbf{1}^\otimes) \odot \mathbf{1}^\otimes \odot \mathbf{1}^{\smallboxtimes} \odot \mathbf{1}^{\otimes} ) (\mathbf{1} \cdot  \mathbf{1}^{\smallboxtimes} \odot \sigma \odot \mathbf{1}^{\smallboxtimes} \odot \mathbf{1}^\otimes)$ and
\item $(\mathbf{1} \cdot  \mathbf{1}^{\smallboxtimes} \odot \sigma \odot \mathbf{1}^{\smallboxtimes} \odot \mathbf{1}^\otimes)(\mathbf{1} \odot \mathbf{1}^\otimes \odot \mathbf{1}^{\smallboxtimes} \smallboxtimes (-\omega^{\smallboxtimes 2} \circ \overline{\Delta} \circ \sigma) \smallboxtimes \mathbf{1}^{\smallboxtimes} \odot \mathbf{1}^\otimes)$
\end{itemize}
and end at
\begin{itemize}
\item $
(b^M(\mathbf{1} \cdot \mathbf{1}^{\smallboxtimes} \otimes \mathbf{1}^\otimes) \odot \mathbf{1}^\otimes) \circ \sigma \odot \mathbf{1}^{\smallboxtimes} \odot \mathbf{1}^\otimes$, and
\item $\mathbf{1} \cdot  \mathbf{1}^{\smallboxtimes} \smallboxtimes (\omega \odot (\sigma \circ \omega)) (\overline{\Delta} \circ \sigma) \odot \smallboxtimes \mathbf{1}^{\smallboxtimes} \odot \mathbf{1}^{\otimes}
 $.
\end{itemize}
The $HB$ term has a crossing which cancels out the sign on $-\omega^{\smallboxtimes 2}$.  Luckily the sign needed to make thing cancel comes from the identity
$$
(b^M(\mathbf{1} \cdot \mathbf{1}^{\smallboxtimes} \otimes \mathbf{1}^\otimes) \odot \mathbf{1}^\otimes) \circ \sigma \odot \mathbf{1}^{\smallboxtimes} \odot \mathbf{1}^\otimes
\quad 
=
 \quad 
 -\mathbf{1} \cdot  \mathbf{1}^{\smallboxtimes} \smallboxtimes (\omega \odot (\sigma \circ \omega)) (\overline{\Delta} \circ \sigma) \odot \smallboxtimes \mathbf{1}^{\smallboxtimes} \odot \mathbf{1}^{\otimes}.
 $$

{\bf The type $\odot \sigma[\alpha'] \odot$.} 
As needed, none of these cancel.  They can only be produced by $HB$, and carry the sign coming from the operator 
$$
(\mathbf{1} \cdot  \mathbf{1}^{\smallboxtimes} \odot \sigma \odot \mathbf{1}^{\smallboxtimes} \odot \mathbf{1}^\otimes)(\mathbf{1} \odot \mathbf{1}^{\otimes} \odot \mathbf{1}^{\smallboxtimes} \smallboxtimes (-\omega^+) \odot  \mathbf{1}^{\otimes}) = 
-(\mathbf{1} \cdot  \mathbf{1}^{\smallboxtimes} \odot \mathbf{1}^{\otimes \geq 1} \odot \mathbf{1}^{\smallboxtimes} \odot \mathbf{1}^\otimes).
$$

\end{proof}


\section{Non-vanishing conditions.}
\label{section:non-vanishing}

Here we include results relevant to the non-vanishing of  $H^0(\operatorname{Mod_\infty}(A)).$
To be precise, we say  $H^0(\operatorname{Mod_\infty}(A)) = 0$
if every object is $0$, and an object $M$ is $0$ if
there is a  degree $-1$ endomorphism $G$ such that  $[B^M, G] = \mathbf{1} \odot \mathbf{1}^\otimes.$  

The basic approach to non-vanishing we take is to consider a homomorphism $S \to T$ to a commutative ring.  There is then a
functor
$$
\operatorname{Mod_\infty}(A) \to 
 \operatorname{Mod_\infty}(A \otimes T)
$$
which sends $b_\bullet \colon M \odot A[1]^\otimes \to M$ to 
$$b_\bullet \colon (M \otimes T) \odot (A\otimes T)[1]^\otimes \to M \otimes T$$
 via the isomorphism
$$
(M \otimes T) \odot (A\otimes T)[1]^\otimes \cong (M \odot A[1]^\otimes) \otimes T.
$$
If $M \otimes T$ is non-zero then so is $M$ since any contracting $G$ would 
also contract $M \otimes T$.

We have two concrete results (Theorems \ref{theorem-sufficient-vanishing} and \ref{theorem-maurer-cartan-identity-image}) that facilitate implementing this strategy.  
In short, they show that for $M \otimes T$ to have any chance of being non-zero, one must focus on the points in $\operatorname{Spec}(S)$ at which the curvature vanishes and the Maurer-Cartan function is not a submersion.
\begin{rem} {\bf (folklore)}
 Theorem \ref{theorem-sufficient-vanishing} is a sharpening of a ``folk theorem'' we learned about from L. Positselski's response to E. Segal's post on MathOverflow \cite{segal-mathoverflow}.  
Positselski refers to 
 \cite{positselski} where he recalls a conversation in which Kontsevich told him the result.
\end{rem}

\begin{rem} {\bf(critical points)}
Theorem \ref{theorem-maurer-cartan-identity-image} suggests that hiding behind the scenes, there is a version of the fact that Orlov's category of singularities \cite{orlov} depends only on a neighborhood of the singular points.
\end{rem}

\begin{prop}{\bf (sufficient non-vanishing)}
\label{prop-sufficient-nonvanishing}
Given a ring homomorphism $S \to T$ and an object $M \in 
H^0(\operatorname{Mod_\infty}(A)),$ the non-vanishing of 
$M \otimes T \in H^0(\operatorname{Mod_\infty}(A \otimes T))$
implies the
non-vanishing of $M$.
\end{prop}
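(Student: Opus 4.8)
The plan is to prove the contrapositive: if $M$ is zero in $H^0(\operatorname{Mod_\infty}(A))$, then $M \otimes T$ is zero in $H^0(\operatorname{Mod_\infty}(A \otimes T))$. By the definition of a zero object recalled just above, assuming $M$ is zero supplies a degree $-1$ endomorphism $G$ of $M \odot A[1]^\otimes$ with $[B^M, G] = \mathbf{1} \odot \mathbf{1}^\otimes$, and the goal is to manufacture an analogous contracting homotopy for the base-changed module.

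First I would record the compatibility of the base-change construction with the structure operators. Under the $S$-linear isomorphism
$$
(M \otimes T) \odot (A \otimes T)[1]^\otimes \cong (M \odot A[1]^\otimes) \otimes T
$$
already exhibited before the statement, the structure operator $B^{M \otimes T}$ of the base-changed module is identified with $B^M \otimes \mathbf{1}_T$, where $\mathbf{1}_T$ denotes the degree $0$ identity on $T$. This identification is exactly the assertion that $- \otimes_S T$ defines the functor $\operatorname{Mod_\infty}(A) \to \operatorname{Mod_\infty}(A \otimes T)$; in particular $(B^M \otimes \mathbf{1}_T)^2 = (B^M)^2 \otimes \mathbf{1}_T = 0$ confirms that the target is a genuine module, and no extra signs intervene because $\mathbf{1}_T$ has degree $0$.

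Next I would set $G' = G \otimes \mathbf{1}_T$, a degree $-1$ endomorphism of $(M \otimes T) \odot (A \otimes T)[1]^\otimes$ under the same identification. Tensoring the contraction identity with $\mathbf{1}_T$ and using that the commutator is compatible with tensoring by a degree $0$ identity yields
$$
[B^{M \otimes T}, G'] = [B^M \otimes \mathbf{1}_T,\, G \otimes \mathbf{1}_T] = [B^M, G] \otimes \mathbf{1}_T = (\mathbf{1} \odot \mathbf{1}^\otimes) \otimes \mathbf{1}_T = \mathbf{1} \odot \mathbf{1}^\otimes ,
$$
the identity on the base-changed module. Hence $M \otimes T$ is zero, which is the contrapositive of the claim.

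The computation is essentially forced, so I do not expect a serious obstacle; the only points demanding care are verifying that the displayed isomorphism intertwines $B^M$ with $B^M \otimes \mathbf{1}_T$ (i.e.\ that base change really lands in $\operatorname{Mod_\infty}(A \otimes T)$) and checking that the Koszul signs are undisturbed by tensoring with the degree $0$ identity $\mathbf{1}_T$. Both are routine, so the bulk of the argument is just the bookkeeping already anticipated in the discussion preceding the statement.
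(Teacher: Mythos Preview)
Your proposal is correct and matches the paper's own approach: the paper omits the formal proof but states in the paragraph preceding the proposition that ``any contracting $G$ would also contract $M \otimes T$,'' which is precisely the contrapositive argument you carry out. The only content beyond that remark is the routine compatibility check you flag, and the paper evidently regards it as clear.
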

\begin{proof}
Omitted.  
\end{proof}

\begin{thm}{\bf (Kontsevich-Positselski vanishing)}
\label{theorem-sufficient-vanishing}
If there is $S$-linear map $\ell \colon A \to S$ sending $\frak{m}_0(1)$ to $1$, then 
$$H^0(\operatorname{Mod_\infty}(A)) =0.$$
\end{thm}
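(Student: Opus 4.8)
The plan is to show that \emph{every} object $M$ of $H^0(\operatorname{Mod_\infty}(A))$ is a zero object, by exhibiting an explicit contracting homotopy built from $\ell$. By the stated definition of a zero object it suffices to produce a degree $-1$ endomorphism $G$ of $M$ in $\operatorname{Mod_\infty}(A)$ with $[B^M,G]=\mathbf{1}\odot\mathbf{1}^\otimes$; by Proposition \ref{proposition-definition-of-mod-infinity-A} such a $G$ is the same data as its corestriction $g_\bullet\colon M\odot A[1]^\otimes\to M$, and the equation to solve is $\delta g_\bullet=\mathbf{1}$. I will work throughout in the dg-algebra of comodule endomorphisms of $P:=M\odot A[1]^\otimes$ equipped with the differential $D:=[B^M,-]$, where concatenation denotes composition.

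First I would write down the ``first approximation'' to the homotopy coming directly from $\ell$. Let $G=g_\bullet\odot\mathbf{1}^\otimes$ be the cofree comodule endomorphism whose corestriction is $g_\bullet=g_1$ with $g_1(m\odot\sigma(a))=-\ell(a)\,m$ and $g_\ell=0$ for $\ell\neq1$; concretely $G$ evaluates $\ell\circ\omega$ on the first tensor factor and deletes it, lowering tensor length by exactly one. The mechanism that makes this work is the curvature term of $B^M$: the summand $\mathbf{1}\odot\mathbf{1}^\otimes\otimes b_\bullet\otimes\mathbf{1}^\otimes$ has a component $\mathbf{1}\odot b_0\otimes\mathbf{1}^\otimes$ inserting $b_0(1_\otimes)=-\sigma(\mathfrak{m}_0(1))$ as a new first factor, and composing with $G$ and using $\ell(\mathfrak{m}_0(1))=1$ returns exactly $\mathbf{1}\odot\mathbf{1}^\otimes$. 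Thus I would compute
\[
[B^M,G]=B^MG+GB^M=\mathbf{1}\odot\mathbf{1}^\otimes+R,
\]
where $R$ collects all remaining terms.

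The crux is to control $R$. Filtering $P$ by tensor length, I observe that $B^M$ raises length by at most one (and does so \emph{only} through $b_0$), while $G$ lowers length by exactly one, so $R$ preserves the filtration. The key computation is that the length-\emph{preserving} part of $B^MG+GB^M$ consists only of curvature contributions, and that these cancel in pairs except for the one producing $\mathbf{1}\odot\mathbf{1}^\otimes$: a curvature inserted among the factors to the right of the deleted one arises once from $B^MG$ (delete the first factor, then insert $-\sigma(\mathfrak{m}_0(1))$) and once from $GB^M$ (insert $-\sigma(\mathfrak{m}_0(1))$ after the first factor, then delete that factor), with opposite Koszul signs. Granting this, $R$ strictly lowers tensor length, hence is locally nilpotent: $R^{p+1}$ annihilates $M\odot A[1]^{\otimes\le p}$.

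Finally I would correct $G$ to an exact homotopy. Since $(B^M)^2=0$, the operator $D$ squares to zero, so $R=[B^M,G]-\mathbf{1}\odot\mathbf{1}^\otimes$ is a $D$-cocycle; by local nilpotence $(\mathbf{1}\odot\mathbf{1}^\otimes+R)^{-1}=\sum_{n\ge0}(-R)^n$ is a well-defined comodule endomorphism and is again a $D$-cocycle. Because $D$ is a graded derivation of composition, $G':=G\,(\mathbf{1}\odot\mathbf{1}^\otimes+R)^{-1}$ satisfies
\[
D(G')=D(G)\,(\mathbf{1}\odot\mathbf{1}^\otimes+R)^{-1}=(\mathbf{1}\odot\mathbf{1}^\otimes+R)(\mathbf{1}\odot\mathbf{1}^\otimes+R)^{-1}=\mathbf{1}\odot\mathbf{1}^\otimes.
\]
This exhibits $M$ as a zero object, and since $M$ was arbitrary, $H^0(\operatorname{Mod_\infty}(A))=0$. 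The main obstacle is the curvature-cancellation identity underlying ``$R$ strictly lowers length'': establishing it is a cross-cancel argument in the spirit of Proposition \ref{A-infinity-equations-on-b}, and pinning down the signs (for instance via the string-diagram bookkeeping used elsewhere in the paper) is where the real work lies.
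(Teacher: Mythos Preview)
Your proposal is correct and follows essentially the same approach as the paper's proof. Both build the first approximation $H$ (your $G$) by evaluating $\ell$ on the first tensor factor, compute $[B,H]=\mathbf{1}\odot\mathbf{1}^\otimes - E$ (your $R=-E$), observe that $E$ strictly lowers tensor length so $\sum_k E^k$ converges, and correct $H$ by this geometric series; the only cosmetic difference is that the paper writes the contracting homotopy as $(\sum_k E^k)H$ while you write it as $H(\mathbf{1}+R)^{-1}$, and the paper packages your cross-cancel computation as the single identity $[B_0,H]=\mathbf{1}\odot\mathbf{1}^\otimes$.
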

\begin{proof}
For any module, a contracting homotopy can be built from $\ell$ and some clever algebra.
With $\lambda \colon A[1] \to S$ given by $- \ell \circ \omega$ (this carries $b_0(1) \mapsto 1 \in S$), set
$$
H = \mathbf{1} \cdot \lambda \odot \mathbf{1}^\otimes
$$
and 
$$
E = \mathbf{1} \odot \mathbf{1}^\otimes - [B,H].
$$
Then 
$$
G = (\sum_{k=0}^\infty E^k)H
$$
satisfies 
$$
\begin{array}{rcl}
[B,G] & = & [B, (\sum_{k=0}^\infty E^k)H] \\
& = &  [B,(\sum_{k=0}^\infty E^k)]H +  (\sum_{k=0}^\infty E^k)[B,H] \\
& = &  0 +  (\sum_{k=0}^\infty E^k)(\mathbf{1} \odot \mathbf{1}^\otimes -E)\\
& = & \mathbf{1} \odot \mathbf{1}^\otimes.
\end{array}
$$

It remains to verify $\sum_{k=0}^\infty E^k$ ``converges''.   Notice that 
$$
[B_0, H] = \mathbf{1} \odot \mathbf{1}^\otimes,
$$
so $E = \mathbf{1} \odot \mathbf{1}^\otimes - [B,H] = -[B_{\geq 1}, H].$  It's now easy to check that 
$$
\sum_{k \geq i} E^k 
$$
is zero on elements of on tensor degree $< i$.
\end{proof}
 
 \begin{exmp} {\bf (matrix factorizations over $S$)}
 \label{example-vanishing-matrix-factorizations}
When $(\mathcal{A}, \frak{m}_0, \cdot )$  is a curved algebra over $S$ admitting an $S$-linear map $\mathcal{A} \to S$ sending $\ell \colon \frak{m}_0(1) \to 1$, the homotopy can be expressed somewhat explicitly.  It is zero for odd indices and
$$
\gamma_{2i+2}(m \otimes f_1 \otimes \dotsm \otimes f_{2i+1}) = m \cdot \ell(f_1)\cdot L(f_2 \otimes f_3)\cdot L(f_4 \otimes f_5) \cdot \dotsm \cdot L(f_{2i} \otimes f_{2i+1})
$$
where 
$$
L(f \otimes g) = \ell(f)\cdot g + f\cdot \ell(g) + \ell(f\cdot g).
$$
 This sort of degenerate behavior is typically avoided for matrix factorizations by taking $S = \mathcal{A}$ and checking $\frak{m}_0(1)$ is not a unit.
 \end{exmp}

\begin{thm}{\bf (Maurer-Cartan differential)}
\label{theorem-maurer-cartan-identity-image}
If $A$ is uncurved (i.e. has $m_0(1) = 0$), then 
$$H^0(\operatorname{Mod_\infty}(A)) =0$$
if and only if
the unit $e \in A^{0}$ is in the image 
of the differential at $0 \in A^1$ of the Maurer-Cartan function
$$
\begin{array}{rccc}
\frak{MC} \colon & A^1& \longrightarrow &  A^2 \\
& a & \mapsto &  \sum_{k=0}^\infty   (-1)^{k \choose 2} \frak{m}_k(a^{\otimes k}).
\end{array}
$$ 
\end{thm}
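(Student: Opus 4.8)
The plan is to first unwind the statement. Since $A$ is uncurved, the constant term $\frak{m}_0(1)$ of $\frak{MC}$ vanishes, and differentiating $\frak{MC}(a) = \sum_{k\ge 0}(-1)^{\binom{k}{2}}\frak{m}_k(a^{\otimes k})$ at $a=0$ annihilates every term with $k\ge 2$; the surviving linear term has coefficient $(-1)^{\binom{1}{2}}=1$, so the differential at $0$ is simply $\frak{m}_1\colon A^1\to A^2$. (The statement is sensible because, in the $2$-periodic grading in which curvature is considered, $A^0=A^2$, so $e$ and the image of $\frak{m}_1$ occupy the same graded piece.) Thus the assertion to prove is
$$
H^0(\operatorname{Mod_\infty}(A)) = 0 \quad\Longleftrightarrow\quad e\in\operatorname{im}(\frak{m}_1).
$$
I would also record the illuminating reformulation that, by strict unitality, $e\in\operatorname{im}(\frak{m}_1)$ is equivalent to $(A,\frak{m}_1)$ being acyclic: if $[e]=0$ then $[x]=[x\cdot e]=[x][e]=0$ for every cocycle $x$.

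For the forward direction I would use the regular module. If $H^0(\operatorname{Mod_\infty}(A))=0$ then every object is a zero object, in particular $A$ regarded as a right module over itself; hence its identity is a coboundary, $\delta G=\mathbf{1}_A$ for some odd $G\in\operatorname{Hom}^{-1}_S(A\odot A[1]^\otimes,A)$. Restricting $\delta G=\mathbf{1}_A$ to the summand with no $A[1]$-tensor factors, only the linear pieces survive (the formulas of Proposition \ref{proposition-definition-of-mod-infinity-A} show that the $b_2$- and higher-order contributions each carry at least one tensor factor), leaving $b_1^A G_1+G_1 b_1^A=\mathbf{1}$. Translating $b_1^A$ back to $\frak{m}_1$ via Theorem \ref{theorem-isomorphism-of-A-and-UeA-modules}, the operator $s=-G_1$ (up to the sign there) satisfies $\frak{m}_1 s+s\frak{m}_1=\mathbf{1}_A$. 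Evaluating on the closed element $e$ (recall $de=0$) gives $e=\frak{m}_1(s(e))+s(\frak{m}_1 e)=\frak{m}_1(s(e))$, so $a:=s(e)$ witnesses $e\in\operatorname{im}(\frak{m}_1)$.

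For the reverse direction I would mimic the Kontsevich--Positselski construction of Theorem \ref{theorem-sufficient-vanishing}, replacing the curvature-trivialising functional $\ell$ by the unit-trivialising element $a$. Setting $\alpha=\sigma(a)\in A[1]$, the $\frak{m}$-$b$ dictionary (Proposition \ref{proposition-bar-construction}) turns $\frak{m}_1(a)=e$ into $b_1(\alpha)=-\eta$. For an arbitrary module $M$ I would take as homotopy seed the odd operator $\mathbf{H}$ that inserts $\alpha$ immediately after the module slot and contracts it with $b^M_\bullet$ (this is odd precisely because the $b$'s raise degree). Using the module $\mathbf{A}_\infty$-relations together with $b_1(\alpha)=-\eta$ and the unit action $b_2^M(\mathbf{1}\odot\eta)=-\mathbf{1}$, the leading term of $[B^M,\mathbf{H}]$ is exactly $\mathbf{1}\odot\mathbf{1}^\otimes$, while every remaining term factors through a genuine multiplication $b_{\ge 2}$ or a higher module operation $b^M_{\ge 3}$. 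Putting $E=\mathbf{1}\odot\mathbf{1}^\otimes-[B^M,\mathbf{H}]$ and $G=(\sum_{k\ge 0}E^k)\mathbf{H}$ then yields $[B^M,G]=\mathbf{1}\odot\mathbf{1}^\otimes$, exhibiting $M$ as a zero object, just as in Theorem \ref{theorem-sufficient-vanishing}.

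The hard part will be the convergence of the geometric series $\sum_k E^k$ in the reverse direction. Unlike the curved case, where the seed relation $[B_0,H]=\mathbf{1}$ holds on the nose and $E=-[B_{\ge 1},H]$ visibly raises tensor degree, here the seed is a multiplication rather than a clean insert-and-delete, so I expect the bookkeeping to be more delicate: I would filter $\operatorname{Mod_\infty}(A)(M,M)$ by the number of $A[1]$-tensor factors, verify that $E$ strictly lowers this filtration (so that $\sum_{k\ge i}E^k$ annihilates elements of tensor degree $<i$ and the sum is finite on each element), and track the Koszul signs produced when $b_1$ passes $\alpha$ and when $\sigma$ is commuted past the $b$'s. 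The forward direction, by contrast, is essentially formal once one isolates the tensor-degree-zero part of $\delta G=\mathbf{1}_A$.
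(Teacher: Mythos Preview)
Your forward direction is fine and matches the paper's idea: test on the regular module $A$ and read off a contracting homotopy for $(A,\frak m_1)$ from the tensor-degree-zero piece of $\delta G=\mathbf 1$, then evaluate on $e$.

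For the reverse direction the paper takes a much shorter, conceptual route: in the uncurved case there is a cohomology functor $H^0(\operatorname{Mod}_\infty(A))\to\operatorname{Mod}(H^*(A))$ sending $M\mapsto H^*(M)$; if $e\in\operatorname{im}(\frak m_1)$ then $H^*(A)=0$, the target category is zero, and faithfulness forces $H^0(\operatorname{Mod}_\infty(A))=0$. No explicit homotopy is built.

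Your constructive approach runs into a real obstacle, not just bookkeeping. With the seed $\mathbf H$ you describe (insert $\alpha=\sigma(a)$ after the module slot and contract with $b^M_\bullet$), the tensor-degree-preserving part of $[B^M,\mathbf H]$ is not $\mathbf 1\odot\mathbf 1^\otimes$ but $-(-1)^{|\cdot|}\,\mathbf 1\odot\mathbf 1^\otimes$. This is the $\mathbf A_\infty$ shadow of the dg identity $d(m\cdot a)-(dm)\cdot a=(-1)^{|m|}m$: right multiplication by $a$ satisfies $dR_a-R_ad=(-1)^{|\cdot|}\mathbf 1$, \emph{not} the contracting-homotopy relation $dR_a+R_ad=\mathbf 1$. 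Hence $E_1=(1+(-1)^{|\cdot|})\,\mathbf 1$, which is $2\cdot\mathbf 1$ on even degrees, and $\sum_k E^k$ diverges; the filtration check you propose would fail. No global sign on $\mathbf H$ repairs this, because the obstruction is the grading-dependent sign $(-1)^{|\cdot|}$.

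If you want to stay constructive, the missing step is to first observe that $s'(m)=(-1)^{|m|}\frak m_2^M(m\odot a)$ \emph{is} a genuine contracting homotopy for $(M,\frak m_1^M)$, and then either take the seed $\mathbf H_1=s'$, $\mathbf H_{\ge 2}=0$ (for which $E_1=0$ and the series converges), or simply invoke Theorem~\ref{theorem:inversion-for-modules} to promote contractibility of $(M,b_1^M)$ to $M\cong 0$ in $H^0(\operatorname{Mod}_\infty(A))$.
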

\begin{proof}
For uncurved algebras, $H^0(\operatorname{Mod_\infty}(A))$ embeds faithfully into the category of unital right modules over $H^0(A)$
and  $H^0(A)$ itself is an object.  Thus we can reduce the question to whether or not 
$H^0(A) = 0.$ 

$H^0(A) = 0$ if and only if $e = m_1(a)$ for some  $a \in A^1$. The map $a \mapsto m_1(a)$ is exactly the differential of $\frak{MC}$.
\end{proof}

\begin{rem} {\bf (wishful statement)}
The statement we would like to make is 
\begin{quote}
$\text{``}H^0(\operatorname{Mod_\infty}(A)) \neq 0$
if and only if there exists a $\mathbf{k}$-valued point in $\operatorname{Spec}(S)$
for which 
$H^0(\operatorname{Mod_\infty}(A \otimes \mathbf{k})) \neq 0.\text{''}$
\end{quote}
However, further research is required to sort out under what conditions a statement like this is true.
For the time being, the results of this section allows one to show $H^0(\operatorname{Mod_\infty}(A)) \neq 0$ in some cases, and 
focus attention on the  closed set\footnote{This set is closed provided appropriate relative flatness conditions on $A^2$ and $A^2/\frak{m}_1(A^1)$ are satisfied.} of   $\mathbf{k}$-valued points at which $m_0(1) = 0$
and $e$ is not in the image of the differential of $\frak{MC}.$  
\end{rem}



\section{Recovering the classical theory}
\label{section:classical-theory}

In this section we verify that for uncurved $\mathbf{A}_\infty$-algebras over a field, an algebra map is an $\mathbf{A}_\infty$-quasi-isomorphism if an only if the resulting functors $(L,R)$ are a Quillen equivalence.
This shows that when the curvature is zero, our proposal for which morphisms constitute equivalences of $\mathbf{A}_\infty$-algebras agrees with classical notion.

\subsection{Homotopies, derivations, and inversion theorems}
This subsection is a review of some basic homotopy theory of uncurved $\mathbf{A}_\infty$-algebras and a couple results relating
The main theorem in this regard is the {\bf homotopy inversion theorem for $\mathbf{A}_\infty$-algebras} of Kadeishvili \cite{kadeishvili-85} and Prout\'e \cite{proute-86}.  We also include 
the 
{\bf homotopy inversion theorem for $\mathbf{A}_\infty$-modules}, and a proof of the fact that the {\bf adjoint algebra functor} sends $\mathbf{A}_\infty$-homotopic maps $f_\bullet$ and $f'_\bullet$ to maps $U(f)$ and $U(f')$ which are related by a $(U(f), U(f'))$-derivation.

\begin{defn}{\bf (interval coalgebra and algebra)}
The map $I_\bullet \to I_\bullet \otimes I_\bullet$
on
$$I_\bullet = \mathbf{Z} p \oplus \mathbf{Z} q \oplus \mathbf{Z} I$$ 
 sending $p \mapsto p \otimes p, q \mapsto q \otimes q$, and $I \mapsto p \otimes I + I \otimes q$
 defines a coalgebra structure on $I_\bullet$ where $\partial p =  \partial q = 0$ and $\partial I = p-q.$
We call this the {\bf interval coalgebra}.  Notice that $\mathbf{Z}q$ and $\mathbf{Z}q$ are sub-coalgebras.
We put $I$ in degree $-1$ since our codifferentials have degree 1.

Dual to the interval coalgebra is the {\bf interval algebra} 
$I^\bullet = \mathbf{Z} e_p \oplus \mathbf{Z} e_q \oplus \mathbf{Z} \epsilon$.
Just as  $\mathbf{Z}q$ and $\mathbf{Z}q$ are sub-coalgebras above, $\langle e_p, \epsilon \rangle$
and $\langle e_q, \epsilon \rangle$ are two-sided ideals with quotients isomorphic to $\mathbf{Z}$.
Again we put $\epsilon$ degree $-1$ since our differentials have degree 1.
\end{defn}

\begin{defn}{\bf ($\mathbf{A}_\infty$-homotopy)} An  {\bf $\mathbf{A}_\infty$-homotopy} between two morphisms $f^\otimes_\bullet, g^\otimes_\bullet \colon A[1]^{\otimes >0} \to A'[1]$ is a degree $1$ map
$$
h_\bullet \colon A[1]^{\otimes \geq 1} \to A'[1]
$$ 
such that the map
$$
f_\bullet \oplus g_\bullet \oplus (f_\bullet^\otimes  \otimes h_\bullet \otimes g_\bullet^\otimes) \colon  I_\bullet \otimes A[1]^\otimes \to A'[1]^\otimes
$$
is  a morphism of differential graded coalgebras (i.e. commutes with $B$ and $\Delta$).  As usual, in this case we say $f_\bullet$ and $g_\bullet$ are {\bf $\mathbf{A}_\infty$-homotopic}.
\end{defn}

\begin{defn}{\bf (homotopy)}
A morphism $\gamma$ of cocomplexes of $S$-modules is a {\bf homotopy equivalence}  if there is a morphism
$\phi$ called a {\bf homotopy inverse} in the opposite direction, such that $1 - \phi \gamma$ and $1- \gamma \phi$ are coboundaries.
A morphism $g_\bullet \colon A \to A'$ of uncurved $\mathbf{A}_\infty$-algebras is called a {\bf homotopy equivalence} if  $g_1$ is a homotopy equivalence of cocomplexes.  
\end{defn}

\begin{thm}{\bf \cite{kadeishvili-85, proute-86} (see also \cite[Theorem 4.2.44]{fooo})
 (homotopy inversion theorem for $\mathbf{A}_\infty$-algebras)}
If $g_\bullet \colon A \to A'$ is a homotopy equivalence of uncurved $\mathbf{A}_\infty$-algebras that are free as $S$-modules, then there is a homotopy equivalence $\ell_\bullet \colon A' \to A$
such that 
\begin{itemize}
\item $g_1$ and $\ell_1$ are homotopy inverses,
\item $\ell_\bullet \circ g_\bullet$ is $\mathbf{A}_\infty$-homotopic to the identity on $A$, and
\item $g_\bullet \circ \ell_\bullet$ is $\mathbf{A}_\infty$-homotopic to the identity on $A'$.
\end{itemize}  
\end{thm}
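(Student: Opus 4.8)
The plan is to construct $\ell_\bullet$ together with the two homotopies by induction on tensor arity, all three inductions driven by a single obstruction-theoretic engine. On the coalgebra side a morphism $A' \to A$ is a sequence $\ell_\bullet = \sum_{n \geq 1} \ell_n$ with $\ell_n \colon (A'[1])^{\otimes n} \to A[1]$ whose geometric series $L = (\ell_\bullet)^\otimes$ satisfies $BL = LB'$. Because $A$ and $A'$ are free over $S$ and $g_1$ is a homotopy equivalence of cocomplexes, I first fix an $S$-linear homotopy inverse $\ell_1 \colon A' \to A$ and $S$-linear homotopies $s, t$ realizing $\mathbf{1}_A - \ell_1 g_1 = [b_1, s]$ and $\mathbf{1}_{A'} - g_1 \ell_1 = [b_1', t]$. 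Freeness is exactly what lets these splittings be chosen $S$-linearly and keeps tensoring with the interval coalgebra $I_\bullet$ homotopically meaningful. Since $\ell_1$ is by construction a homotopy inverse of $g_1$, once $\ell_\bullet$ is assembled it is automatically a homotopy equivalence in the stated sense.

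For the higher components I filter the equation $BL = LB'$ by arity. On the complex $C^n = \operatorname{Hom}_S((A'[1])^{\otimes n}, A[1])$ with the differential $\partial \phi = b_1 \phi - (-1)^{|\phi|}\, \phi(\mathbf{1}^\otimes \otimes b_1' \otimes \mathbf{1}^\otimes)$ coming from the linear parts, the arity-$n$ piece reads $\partial \ell_n = \mathcal{O}_n$, where $\mathcal{O}_n$ is a universal expression in $b_{\geq 2}$, $b'_{\geq 2}$ and the already-chosen $\ell_1, \dots, \ell_{n-1}$. The cross-cancel bookkeeping of Proposition \ref{A-infinity-equations-on-b}, together with the lower equations, shows $\mathcal{O}_n$ is $\partial$-closed; the data $s$ and $t$ assemble into a contraction of $C^n$, so setting $\ell_n$ equal to that contraction applied to $\mathcal{O}_n$ solves the equation. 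This is a homological perturbation argument, and the resulting sum-over-trees formula is the closed form of $\ell_\bullet$.

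The two $\mathbf{A}_\infty$-homotopies use the same engine through the interval coalgebra. By the definition of $\mathbf{A}_\infty$-homotopy, exhibiting $\ell_\bullet \circ g_\bullet \simeq \mathbf{1}_A$ amounts to a map $h_\bullet \colon A[1]^{\otimes \geq 1} \to A[1]$ for which $(\ell_\bullet \circ g_\bullet) \oplus \mathbf{1} \oplus ((\ell_\bullet \circ g_\bullet)^\otimes \otimes h_\bullet \otimes \mathbf{1}^\otimes) \colon I_\bullet \otimes A[1]^\otimes \to A[1]^\otimes$ is a dg-coalgebra morphism, with the composite $\ell_\bullet \circ g_\bullet$ formed as in Proposition \ref{proposition-composition-of-algebra-morphisms}. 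Seeding $h_1 = s$ and inducting on arity again yields, at each stage, a $\partial$-closed obstruction that the contraction kills, while the counit and comultiplication compatibilities hold automatically because the target is cofree. The homotopy $g_\bullet \circ \ell_\bullet \simeq \mathbf{1}_{A'}$ is produced symmetrically, seeded by $t$.

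The main obstacle is this shared engine: checking that each arity-$n$ obstruction is genuinely a $\partial$-cocycle and that the contraction built from $s$ and $t$ returns an honest primitive, all with the correct Koszul signs and compatibly with the comultiplication. This bookkeeping, rather than any single conceptual step, is the heart of the argument, and the string-diagram crossing count recommended earlier is the practical device for fixing signs. One may instead simply invoke the cited theorems of Kadeishvili and Prout\'e, for which this inductive scheme is the standard proof.
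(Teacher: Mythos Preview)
The paper does not prove this theorem; it is quoted from Kadeishvili, Prout\'e, and \cite[Theorem 4.2.44]{fooo} and used as input. So there is no ``paper's proof'' to compare against, only the module analogue in Appendix~\ref{section-some-homological-algebra}, which is the template the authors have in mind.

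Your sketch has the right architecture (filter by arity, identify a closed obstruction, kill it, repeat) but one step is wrong as written. You assert that ``the data $s$ and $t$ assemble into a contraction of $C^n$'' and then apply that contraction to $\mathcal{O}_n$. The complex $C^n = \operatorname{Hom}_S((A'[1])^{\otimes n}, A[1])$ with the $\partial$ you describe is \emph{not} contractible: its cohomology is governed by the cohomologies of $A$ and $A'$, which need not vanish. The homotopies $s,t$ witness that $g_1$ and $\ell_1$ are inverse up to homotopy; they do not make $A$ or $A'$ acyclic, and so do not contract $C^n$.

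What $s$ and $t$ \emph{do} buy you is that pre- or post-composition by $g_1$ (and by $\ell_1$) induces a chain homotopy equivalence between obstruction complexes. This is exactly the mechanism in the paper's proof of Theorem~\ref{theorem:inversion-for-modules}: one does not kill $\operatorname{obs}(\psi_\bullet)$ by contracting its ambient complex, but rather pushes it forward along $\phi_\bullet \circ -$ to $\operatorname{obs}(\phi_\bullet \circ \psi_\bullet)$, which is the obstruction of something $A_k$-homotopic to $\mathbf{1}$ and therefore vanishes. Because $\phi_\bullet \circ -$ is a homotopy equivalence on the obstruction complex, the original obstruction class was already zero. The same transport trick is then used a second time to correct the extension so that the homotopy itself extends. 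Your inductive scheme will go through once you replace ``$C^n$ is contractible'' with this comparison-of-obstruction-complexes argument; the sum-over-trees picture you allude to belongs to the homological perturbation lemma, which needs genuine deformation-retract data and is a different (though related) proof.
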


\begin{lem}{\bf ($A \sim U_e(A)$)}
\label{lemma-uncurved-equals-adjoint} 
For a strictly unital $\mathbf{A}_\infty$-algebra $A$ over a field\footnote{It is do to our ignorance of the global contracting homotopy in this proof that we must make the unfortunate requirement that  the coefficients lie in a field in the Theorem \ref{theorem:quillen=classical}.} $S$ 
with $b_0 = 0$,  $U_e(A)$ is also uncurved
$$
i_\bullet \colon A[1]^{\otimes \geq 1} \to U_e(A)[1]
$$
of Theorem \ref{theorem-universal-property-of-U_e(A)} 
is a homotopy equivalence of $\mathbf{A}_\infty$-algebras.
\end{lem}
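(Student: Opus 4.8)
The plan is to separate the two assertions. First, that $U_e(A)$ is uncurved is immediate: in the verification of the curved dg-structure on $U(A)$ the curvature was identified as $c = \frak{m}_0(1) = -\omega(b_0(1))$, so the hypothesis $b_0 = 0$ forces $c = 0$, and this descends to the quotient $U_e(A)$. By Theorem \ref{theorem-universal-property-of-U_e(A)} the map $i_\bullet$ is then a morphism of \emph{uncurved} strictly unital $\mathbf{A}_\infty$-algebras, so its linear part $i_1 \colon A[1] \to U_e(A)[1]$ is a morphism of cocomplexes. By definition, proving that $i_\bullet$ is a homotopy equivalence of $\mathbf{A}_\infty$-algebras amounts to showing that $i_1$ is a homotopy equivalence of cocomplexes. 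Because $S$ is a field, every cochain complex of $S$-modules splits as its cohomology plus a contractible complex, so a quasi-isomorphism of such complexes is automatically a homotopy equivalence. Thus the whole problem reduces to showing that $i_1$, which includes $A$ as the length-one generators of $U_e(A)$, is a \emph{quasi-isomorphism}.

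Next I would make the cobar picture explicit. Splitting the strict unit as $A = S\cdot e \oplus \overline{A}$ and using the two families of generators of the ideal $I$ --- which kill every tensor word of length $>1$ containing $\eta$ and identify $1_{\smallboxtimes}$ with $\omega[\eta]$ --- one sees that $U_e(A)$ is, as a graded algebra, the tensor algebra on $(\overline{A}[1]^{\otimes \geq 1})[-1]$, with the length-one generator $\omega[\eta]$ serving as the unit. Under this identification the differential $d = d_1 + \overline{d}_2$ is exactly the differential of the reduced cobar construction of the (strictly unital) bar coalgebra of $A$: the term $d_1$ applies the internal $\mathbf{A}_\infty$-differential syllable-wise and $\overline{d}_2$ is the deconcatenation coproduct $\overline{\Delta}$. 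In other words $(U_e(A), d)$ is the reduced bar--cobar resolution $\Omega\overline{B}A$, and $i_1$ is the canonical inclusion of $A$ as the weight-one part.

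To compute its cohomology I would filter $U_e(A)$ by total tensor weight $N$ (the total number of $A[1]$-slots), noting that $\overline{d}_2$ and the $b_1$-part of $d_1$ preserve $N$ while the $b_{\geq 2}$-parts strictly lower it; hence $F_p = \{N \le p\}$ is an exhaustive, bounded-below subcomplex filtration whose spectral sequence converges. On the associated graded the differential is $\overline{d}_2$ together with the internal $b_1$, which is the differential of the cobar of the \emph{cofree} coalgebra on the complex $(\overline{A}[1], b_1)$. The classical acyclicity of the cobar of a cofree coalgebra (equivalently, the smallness of the bar complex of a free algebra), combined with the K\"unneth theorem over the field $S$, shows that this associated-graded cohomology is concentrated in weight one and equals $H^*(A, \frak{m}_1)$ there. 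All higher differentials then vanish for weight reasons, so the spectral sequence collapses and the edge map, which is precisely $i_1$, realizes $H^*(A,\frak{m}_1) \xrightarrow{\sim} H^*(U_e(A), d)$.

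The main obstacle is the associated-graded acyclicity in weights $\ge 2$: one must produce the contracting homotopy for the cofree-coalgebra cobar --- the usual ``extra degeneracy'' built from the strict unit, which is exactly what the generators of $I$ are arranged to accommodate. This homotopy lives only at the level of the associated graded; assembling it into a single global contracting homotopy valid in all weights simultaneously is the step we do not carry out, and it is for this reason alone that the field hypothesis is imposed --- it lets us upgrade the quasi-isomorphism just established to the required homotopy equivalence without such a global homotopy.
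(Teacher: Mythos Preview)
Your argument is correct and reaches the same conclusion, but by a different route than the paper. Both proofs make the same initial reductions: $U_e(A)$ is uncurved since its curvature is $c = -\omega(b_0(1)) = 0$; showing $i_\bullet$ is a homotopy equivalence reduces to showing $i_1$ is one; and over a field this reduces to showing $i_1$ is a quasi-isomorphism. From there the arguments diverge. You identify $U_e(A)$ with the reduced bar--cobar construction, filter by total tensor weight so that the associated graded becomes the cobar of the cofree coalgebra on $(\overline{A}[1], b_1)$, and invoke the simplex-type acyclicity of that complex in weights $\geq 2$ (tensored with $\overline{A}[1]^{\otimes N}$, hence the K\"unneth appeal) to collapse the spectral sequence. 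The paper instead writes down an explicit local homotopy $H$ that merges a length-one first syllable into the second,
\[
H(x_1 \smallboxtimes \dotsm \smallboxtimes x_k) = \begin{cases}(-1)^a\,\omega[\sigma(a) \otimes \sigma(x_2)] \smallboxtimes x_3 \smallboxtimes \dotsm \smallboxtimes x_k & \text{if } x_1 = a \in A,\\ 0 & \text{otherwise,}\end{cases}
\]
observes that $\mathbf{1} - [d,H]$ is the identity on $A$ and strictly simplifies everything else, and iterates: for any closed $u$ some power $(\mathbf{1}-[d,H])^\ell(u)$ lands in $A$, whence $u - a = d\hat{H}(u)$ with $\hat{H} = H\sum_{i=0}^{\ell-1}(dH)^i$. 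Your approach situates the result in standard bar--cobar theory at the cost of invoking a spectral sequence; the paper's is more hands-on and comes tantalizingly close to the global contracting homotopy whose absence is the stated reason for the field hypothesis --- their $\hat{H}$ depends on $u$ only through the choice of $\ell$.
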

\begin{proof}
This map is unital, so we need only check that 
$$
i_1 \colon A[1] \to U_e(A)[1]
$$
is a homotopy equivalence, and since we are over a field it suffices to check it is a quasi-isomorphism.  

After these reductions, the proof is essentially the same as that of \cite[Lemme 1.3.2.3]{lefevre-hasegawa}.
With $H$ is defined as 
$$
H(x_1 \smallboxtimes \dotsm \smallboxtimes x_k) = 
\left\{
\begin{array}{cl}
(-1)^a \ \omega[\sigma(a) \otimes \sigma(x_2)] \smallboxtimes x_3 \smallboxtimes \dotsm \smallboxtimes x_k & \text{ if $x_1 = a$ and,} \\
0 & \text{ otherwise }
\end{array}
\right.
$$
the map 
$\mathbf{1} - [d,H]$ is the identity on $A \subseteq U_e(A)$, and for any element $u$ there is an $\ell$ such that $(\mathbf{1} - [d,H])^\ell(u) \in A.$  This guarantees that for closed $u$ and $a = (\mathbf{1} - [d,H])^\ell(u)$ we have $u-a = d\hat{H}(u)$ for $\hat{H} = H\sum_{i =0}^{\ell-1} (dH)^i.$
\end{proof}

\begin{defn}{\bf ($(\phi, \psi)$-derivation)} Given two morphisms of differential graded algebras $\phi, \psi \colon \mathcal{A} \to \mathcal{A}'$, and $(\phi, \psi)$-derivation $D$ is a degree $1$ map
$$
\mathcal{A} \to \mathcal{A}'
$$
such that $D(ab) =  D(a) \psi(b) + (-1)^a \phi(a) D(b)$.  Notice this data is the same as a homomorphism 
$$
(\phi, \psi, D) \colon \mathcal{A} \to I^\bullet \otimes \mathcal{A}'
$$
sending $a \mapsto  e_0 \phi(a) + e_1 \psi(a) + \epsilon D(a)$.
  For convenience, we put $\epsilon$ degree $-1$.
\end{defn}

\begin{repthm}{theorem:homotopic-dgs-have-homotopic-modules} {\bf (homotopic algebras and modules)}
If a morphism $f \colon \mathcal{A} \to \mathcal{B}$ of differential graded algebras over $S$ is a homotopy equivalence when considered as a morphism of complexes of $S$-modules, 
then for any $\mathcal{A}$-module $M$ and $\mathcal{B}$-module $N$, the adjunction morphisms
\begin{itemize}
\item $\eta \colon M \to M \otimes_{\mathcal{A}} \mathcal{B}$ and
\item $\epsilon \colon N  \otimes_{\mathcal{A}} \mathcal{B} \to N$
\end{itemize}
are homotopy equivalences when considered as morphisms of complexes of $S$-modules.
\end{repthm}
\begin{proof}
See Appendix \ref{section-some-homological-algebra}.
\end{proof}

\begin{prop}
{\bf ($\mathbf{A}_\infty$-homotopy $\leadsto$ $U_e$-derivation)}
An $\mathbf{A}_\infty$-homotopy $h_\bullet \colon A[1]^\otimes \to A'[1]$
between morphism $f_\bullet$ and $h_\bullet$ defines a 
$(U_e(f), U_e(g))$-derivation $D \colon U_e(A) \to U_e(A')$
by $D = \mathbf{1}^{\smallboxtimes} \smallboxtimes (f_\bullet^\otimes \otimes h_\bullet \otimes g_\bullet^\otimes) \smallboxtimes \mathbf{1}^{\smallboxtimes}$.
\end{prop}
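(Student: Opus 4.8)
The plan is to show that the triple $(U_e(f), U_e(g), D)$ assembles into a single morphism of dg-algebras
$$
\Psi = e_p \otimes U_e(f) + e_q \otimes U_e(g) + \epsilon \otimes D \colon U_e(A) \to I^\bullet \otimes U_e(A'),
$$
which by the remark in the definition of $(\phi,\psi)$-derivation is exactly the statement that $D$ is a $(U_e(f), U_e(g))$-derivation (and, moreover, a chain homotopy $[d,D] = U_e(f) - U_e(g)$, so that it genuinely ``relates'' the two functor images). First I would unwind the hypothesis. The $\mathbf{A}_\infty$-homotopy is the dg-coalgebra morphism $\Phi = f_\bullet^\otimes \oplus g_\bullet^\otimes \oplus H$ on $I_\bullet \otimes A[1]^\otimes \to A'[1]^\otimes$, where $H = f_\bullet^\otimes \otimes h_\bullet \otimes g_\bullet^\otimes$. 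Writing $F = f_\bullet^\otimes$ and $G = g_\bullet^\otimes$, the content of $\Phi$ on the $I$-strand records two identities: compatibility with $B$ gives the homotopy relation $[B,H] = F - G$ (up to the usual signs), while compatibility with $\Delta$ (using $\Delta I = p \otimes I + I \otimes q$) makes $H$ an $(F,G)$-coderivation, $\Delta H = (F \smallboxtimes H + H \smallboxtimes G)\Delta$. These are the two pieces of input I will transport through $U_e$.

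The organizing observation is that $U_e$ converts coderivation data into derivation data: a coalgebra morphism such as $F$ becomes the algebra morphism $U_e(f)$ by functoriality (Theorem \ref{theorem-universal-property-of-U_e(A)}), and the $(F,G)$-coderivation $H$ becomes the prolonged map $D = \mathbf{1}^{\smallboxtimes} \smallboxtimes \widetilde{H} \smallboxtimes \mathbf{1}^{\smallboxtimes}$ with $\widetilde{H} = \omega \circ H \circ \sigma$ on generators, the flanking $\mathbf{1}^{\smallboxtimes}$ being understood to act by $U_e(f)$ on the generators to the left of the struck generator and by $U_e(g)$ on those to the right (consistent with $F$ sitting to the left of $h_\bullet$ and $G$ to the right inside $H$). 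With this reading the Leibniz identity $D(uv) = D(u)\,U_e(g)(v) + (-1)^{|u|} U_e(f)(u)\, D(v)$ is a formal consequence of the sum-over-positions form of the prolongation; I would verify it on a monomial $x_1 \smallboxtimes \dotsm \smallboxtimes x_k$ by splitting the struck-position sum at the factor containing $u$, which is precisely the computation that the graded-algebra map $\Psi$ respects multiplication.

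Next I would establish the differential compatibility $[d, D] = U_e(f) - U_e(g)$, i.e.\ that $\Psi$ is a chain map. Since the differential $d = d_1 + \overline{d}_2$ of $U(A)$ has a linear part built from $B$ and a quadratic part built from the reduced comultiplication $\overline{\Delta}$, this splits into two pieces: on the linear part the identity reduces to the transported homotopy relation $[B,H] = F - G$, and on the quadratic part to the transported coderivation identity for $H$. I expect the main obstacle to live here, for the same reason it did in the verification that $U(A)$ is a curved dg-algebra: $U(A)$ is built from $\overline{\Delta} = \Delta - \mathbf{1}^\otimes \smallboxtimes 1_\otimes - 1_\otimes \smallboxtimes \mathbf{1}^\otimes$, whereas $H$ is a coderivation for the \emph{full} $\Delta$, so the boundary correction terms $\mathbf{1}^\otimes \smallboxtimes 1_\otimes$ must be tracked exactly as in that earlier proof, and this is where the signs (best counted via the string-diagram crossings recommended in the paper) concentrate.

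Finally I would check that $D$ descends from $U(A)$ to the quotient $U_e(A) = U(A)/I$, that is $D(I) \subseteq I'$. Because a homotopy of strictly unital morphisms is again strictly unital, $h_\bullet$ annihilates every tensor containing $\eta$, so $\widetilde{H}$ kills the unit-bearing generators and carries the relation $1_{\smallboxtimes} - \omega[\eta]$ into $I'$; together with the Leibniz rule $D(uxv) = D(u)\,U_e(g)(xv) \pm U_e(f)(u)D(x)U_e(g)(v) \pm U_e(f)(ux)D(v)$ and the fact that $U_e(f), U_e(g)$ are already defined on the quotient, this yields $DI \subseteq I'$ and the descent. The remaining steps beyond the quadratic-part bookkeeping are formal consequences of the prolongation structure and of the functoriality of $U_e$.
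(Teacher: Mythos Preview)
Your proposal is correct and, since the paper omits this proof entirely, it supplies exactly the kind of argument one would expect: package the triple as a map into $I^\bullet \otimes U_e(A')$, verify the Leibniz rule from the sum-over-positions form of the prolongation, verify the chain-map identity $[d,D] = U_e(f) - U_e(g)$ by transporting $[B,H]=F-G$ through the $d_1$-part and the $(F,G)$-coderivation identity through the $\overline d_2$-part, and then check descent to the quotient using strict unitality of $h_\bullet$. Your reading of the formula, with the flanking $\mathbf{1}^{\smallboxtimes}$ acting by $U_e(f)$ on the left and $U_e(g)$ on the right of the struck generator, is the only interpretation under which the displayed expression is a well-defined map $U_e(A)\to U_e(A')$, and it is the intended one.

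One small refinement: the $\overline\Delta$ versus $\Delta$ bookkeeping you flag as the main obstacle is in fact milder here than in the verification that $U(A)$ is a curved dg-algebra. There the boundary terms $\mathbf{1}^\otimes\smallboxtimes 1_\otimes + 1_\otimes\smallboxtimes\mathbf{1}^\otimes$ survived and produced the curvature $[c,-]$, because $b_0$ is present. In the present computation the analogous boundary terms involve $H(1_\otimes)$, and since $h_\bullet$ is defined only on $A[1]^{\otimes\geq 1}$ one has $H(1_\otimes)=0$; a short calculation then shows $\overline\Delta' H = (F\smallboxtimes H + H\smallboxtimes G)\overline\Delta$ on the nose, so the $\overline d_2$-part of $[d,D]$ vanishes and the entire contribution $U_e(f)-U_e(g)$ comes from the $d_1$-part. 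This does not affect the validity of your outline, but it means the ``tracking'' you anticipate collapses to a single cancellation rather than the more delicate accounting of the earlier proof.
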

\begin{proof}
Omitted.
\end{proof}

\begin{repthm}{theorem:inversion-for-modules}
{\bf  (homotopy inversion theorem for $\mathbf{A}_\infty$-modules)}
Given a $\mathbf{A}_\infty$-morphism $\phi_\bullet \colon M \odot A[1]^{\otimes} \to N$ and an $S$-linear map
$\psi_1 \colon M  \to N$ which is a homotopy equivalence of cocomplexes of $S$-modules, then 
there exists $\psi_\bullet \colon N \odot A[1]^\otimes \to M$ such that 
\begin{itemize}
\item $\phi_1$ and $\psi_1$ are homotopy inverses,
\item $\psi_\bullet (\phi_\bullet \odot \mathbf{1}^\otimes)$ is $\mathbf{A}_\infty$-homotopic to the identity on $M$, and
\item $\phi_\bullet (\psi_\bullet \odot \mathbf{1}^\otimes)$ is $\mathbf{A}_\infty$-homotopic to the identity on $N$.
\end{itemize}
\end{repthm}
\begin{proof}
See Appendix \ref{section-some-homological-algebra}.
\end{proof}

\subsection{$\mathbf{A}_\infty$-homotopy equivalence $\Leftrightarrow$ Quillen equivalence}

\begin{thm}{\bf (Quillen = classical)}
\label{theorem:quillen=classical}
For $\mathbf{A}_\infty$-algebras over a field,
a morphism $f_\bullet \colon A[1]^{\otimes \geq 1} \to A'[1]$ is a homotopy equivalence, if and only if
the functors $(L_f, R_f)$ are a Quillen equivalence.
\end{thm}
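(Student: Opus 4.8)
The plan is to transport the entire question across the module identification of Theorem~\ref{theorem-isomorphism-of-A-and-UeA-modules}, so that it becomes a statement about the honest (strict) dg-algebra map $U_e(f)\colon U_e(A)\to U_e(A')$. First I would record two preliminary facts. The first is that, under the isomorphism $\operatorname{Mod_\infty^{\text{st}}}(A)\cong\operatorname{Mod}_\text{dg}^\text{st}(U_e(A))$, the Quillen adjunction $(L_f,R_f)$ produced by the functor $\operatorname{Mod_\infty^{\text{st}}}$ is identified with the usual extension/restriction-of-scalars adjunction $\bigl(-\otimes_{U_e(A)}U_e(A'),\ \operatorname{res}_{U_e(f)}\bigr)$, and that the weak equivalences $\mathcal{W}_A$ correspond to quasi-isomorphisms of $U_e(A)$-modules (over a field these coincide with homotopy equivalences of complexes). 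The second preliminary fact is the commuting square on linear parts
$$
(i_{A'})_1\circ f_1 \;=\; U_e(f)\circ (i_A)_1,
$$
which is just naturality of the unit $i_\bullet$ of the adjunction of Theorem~\ref{theorem-universal-property-of-U_e(A)} (using that $U_e(f)$ is strict, so it agrees with its own linear part). Since $A$ and $A'$ are uncurved and $S$ is a field, Lemma~\ref{lemma-uncurved-equals-adjoint} tells me $(i_A)_1$ and $(i_{A'})_1$ are homotopy equivalences of complexes, so by two-out-of-three the map $f_1$ is a homotopy equivalence of cocomplexes if and only if $U_e(f)$ is a homotopy equivalence of complexes.

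For the forward direction, suppose $f_\bullet$ is a homotopy equivalence, i.e.\ $f_1$ is a homotopy equivalence of cocomplexes. By the two-out-of-three observation, $U_e(f)$ is then a homotopy equivalence of complexes of $S$-modules, which is exactly the hypothesis of Theorem~\ref{theorem:homotopic-dgs-have-homotopic-modules}. That theorem yields that for every $U_e(A)$-module $M$ and every $U_e(A')$-module $N$ the adjunction unit $\eta\colon M\to M\otimes_{U_e(A)}U_e(A')$ and counit $\epsilon\colon N\otimes_{U_e(A)}U_e(A')\to N$ are homotopy equivalences of complexes, hence weak equivalences. This says precisely that $(L_f,R_f)$ descends to an equivalence of homotopy categories, so the Quillen adjunction is a Quillen equivalence.

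For the converse, suppose $(L_f,R_f)$ is a Quillen equivalence. I would evaluate the derived unit on the rank-one free module $U_e(A)$, which is cofibrant in the projective model structure: here $L_f$ carries it to the free module $U_e(A')$ without any cofibrant replacement, and the unit map $U_e(A)\to R_f L_f\,U_e(A)=\operatorname{res}_{U_e(f)}U_e(A')$ is, on underlying complexes, exactly $U_e(f)$. Because the adjunction is a Quillen equivalence, this unit on a cofibrant object is a weak equivalence, so $U_e(f)$ is a quasi-isomorphism and therefore (over a field) a homotopy equivalence of complexes. Two-out-of-three against $(i_A)_1$ and $(i_{A'})_1$ then forces $f_1$ to be a homotopy equivalence of cocomplexes, which is the definition of $f_\bullet$ being a homotopy equivalence of $\mathbf{A}_\infty$-algebras.

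The main obstacle I expect is the first preliminary step: pinning down precisely that the Quillen data $(L_f,R_f,\mathcal{W}_A)$ attached to $f_\bullet$ by the functor $\operatorname{Mod_\infty^{\text{st}}}$ agrees, under Theorem~\ref{theorem-isomorphism-of-A-and-UeA-modules}, with the extension/restriction adjunction and quasi-isomorphisms for $U_e(f)$, together with confirming that the rank-one free module is cofibrant and that the derived unit computed there is literally $U_e(f)$. Once Lemma~\ref{lemma-uncurved-equals-adjoint} and Theorem~\ref{theorem:homotopic-dgs-have-homotopic-modules} are in hand, everything else is formal two-out-of-three bookkeeping.
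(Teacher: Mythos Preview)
Your proposal is correct and follows the same overall strategy as the paper: transport the question to the dg side via $U_e$, use Theorem~\ref{theorem:homotopic-dgs-have-homotopic-modules} for the forward direction, and evaluate the unit on the free module $U_e(A)$ together with the naturality square involving $i_\bullet$ for the converse.

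There is one small but genuine difference worth noting. In the forward direction the paper does not use Lemma~\ref{lemma-uncurved-equals-adjoint} and two-out-of-three to conclude that $U_e(f)$ is a homotopy equivalence of complexes; instead it invokes the Kadeishvili--Prout\'e inversion theorem to produce an actual $\mathbf{A}_\infty$-inverse $g_\bullet$, and then observes that $U_e(g_\bullet)$ furnishes an $S$-linear homotopy inverse to $U_e(f_\bullet)$ (via the proposition that an $\mathbf{A}_\infty$-homotopy yields a $(U_e(f),U_e(g))$-derivation). Your route is slightly more economical, since Lemma~\ref{lemma-uncurved-equals-adjoint} is already needed for the converse anyway and the algebra inversion theorem becomes unnecessary for this particular argument; the paper's route, on the other hand, makes the homotopy inverse of $U_e(f)$ explicit as $U_e(g)$. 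The converse directions are essentially identical.
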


\begin{proof}
In light of the homotopy inversion theorems, proving this assertion is a matter of verifying 
that the condition that
$$f_1 \colon A[1] \to A'[1]$$
is a homotopy equivalence of cocomplexes of $S$-modules is equivalent to the condition that both
\begin{itemize}
\item $\eta \colon M   \to M \otimes_{U_e(A)} U_e(A')$, and 
\item $\epsilon  \colon N \otimes_{U_e(A')} U_e(A) \to N$
\end{itemize}
are homotopy equivalences of cocomplexes of $S$-modules.  These two maps come from the adjunction
homomorphisms $\mathbf{1} \Rightarrow RL$ and $LR \Rightarrow \mathbf{1}$.

Under the assumption that $f_1$ is a homotopy equivalence, the
we have an $\mathbf{A}_\infty$-homotopy inverse $g_\bullet \colon A'[1]^\otimes \to A$.  Plugging in these maps 
into the functor  $U_e$ gives dg-homotopy inverse maps of the dg-algebras  $U_e(A)$ and $U_e(A').$
On one hand,
$U_e(f_\bullet)$ is a morphism of $U_e(A)-U_e(A)$-bimodules and $U(g_\bullet)$ is a $S$-linear homotopy inverse.
On the other 
$U_e(g_\bullet)$ is a morphism of $U_e(A')-U_e(A')$-bimodules and $U(f_\bullet)$ is a $S$-linear homotopy inverse.
In particular, the algebras are homotopy equivalent and so the adjunction morphisms are $S$-linear homotopy invertible.

On the other hand if the pair of maps are homotopy equivalences of cocomplexes of $S$-modules,
one can consider the module $M = U_e(A)$, and we see that $U(f_\bullet)$ is a homotopy equivalence.
This suffices to give the result because the commutative diagram
$$
\begin{tikzcd}
A \arrow{d}[swap]{i_\bullet}  \arrow{r}{f_\bullet} & A' \arrow{d}{i_\bullet} \\
U_e(A) \arrow[swap]{r}{U_e(f_\bullet)} &U_e(A') \\
\end{tikzcd}
$$
guarantees $f_\bullet$ is a homotopy equivalence. 
\end{proof}

\appendix

\section{Homotopy inversion.}
\label{section-some-homological-algebra}

Here we provide proofs of two basic facts: the {\bf adjunction morphisms associated to  a morphism of dg-algebras that is a homotopy equivalence are homotopy equivalences}, and the {\bf homotopy inversion theorem} for $\mathbf{A}_\infty$-modules.

\subsection{Homotopic dg-algebras.}

\begin{thm}{\bf (homotopic algebras and modules)}
\label{theorem:homotopic-dgs-have-homotopic-modules}
If a morphism $\mathfrc{f} \colon \mathcal{A} \to \mathcal{B}$ of differential graded algebras over $S$ is a homotopy equivalence when considered as a morphism of complexes of $S$-modules, 
then for any $\mathcal{A}$-module $M$ and $\mathcal{B}$-module $N$, the adjunction morphisms
\begin{itemize}
\item $\eta \colon M \to M \otimes_{\mathcal{A}} \mathcal{B}$ and
\item $\epsilon \colon N  \otimes_{\mathcal{A}} \mathcal{B} \to N$
\end{itemize}
are homotopy equivalences when considered as morphisms of complexes of $S$-modules.
\end{thm}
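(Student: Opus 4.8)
The plan is to collapse the whole statement to a single assertion about extension of scalars. First I would note that under the canonical isomorphisms $M\cong M\otimes_{\mathcal A}\mathcal A$ and $N\cong N\otimes_{\mathcal A}\mathcal A$ the unit $\eta$ becomes exactly $\mathrm{id}_M\otimes_{\mathcal A}\mathfrc f$, and that the ``unit insertion'' map $\nu\colon N\to N\otimes_{\mathcal A}\mathcal B$, $n\mapsto n\otimes 1$, likewise becomes $\mathrm{id}_N\otimes_{\mathcal A}\mathfrc f$. Since the counit satisfies $\epsilon\nu=\mathrm{id}_N$ on the nose (it sends $n\otimes 1\mapsto n$), it suffices to prove that $\nu$ is a homotopy equivalence: then for any homotopy inverse $\rho$ of $\nu$ one has $\epsilon\simeq\epsilon\nu\rho=\rho$, so $\epsilon$ is a homotopy equivalence too, with homotopy inverse $\nu$. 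Thus the entire theorem reduces to the claim that for every right $\mathcal A$-module $P$ the map $\Phi_P=\mathrm{id}_P\otimes_{\mathcal A}\mathfrc f\colon P\to P\otimes_{\mathcal A}\mathcal B$ is a homotopy equivalence of complexes of $S$-modules, which I would apply once with $P=M$ (for $\eta$) and once with $P=N$ (for $\nu$, hence $\epsilon$).

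To prove this core claim I would pass to the two-sided bar resolution
$$
B(P,\mathcal A,\mathcal A)=\bigoplus_{n\ge 0}P\otimes\bar{\mathcal A}^{\otimes n}\otimes\mathcal A,
$$
whose bar augmentation onto $P$ is an $S$-linear homotopy equivalence via the usual extra-degeneracy contraction. The map $\Phi_P$ is then modelled by the chain map $B(P,\mathcal A,\mathcal A)\to B(P,\mathcal A,\mathcal B)$ induced by applying $\mathfrc f$ in the final tensor slot, where $B(P,\mathcal A,\mathcal B)=B(P,\mathcal A,\mathcal A)\otimes_{\mathcal A}\mathcal B$ is the semifree bar model of $P\otimes_{\mathcal A}\mathcal B$. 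Both complexes carry the exhaustive, bounded-below word-length filtration, the induced map respects it, and on the associated graded the bar differential loses its multiplicative part so that the map is simply $\mathrm{id}_{P\otimes\bar{\mathcal A}^{\otimes n}}\otimes\mathfrc f$. Tensoring the $S$-linear homotopy data $(\mathfrc g,s,t)$ of $\mathfrc f$ with the relevant identities then exhibits a homotopy equivalence on each filtration quotient.

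The final step is to assemble these graded homotopy equivalences into an honest homotopy equivalence of total complexes, for which I would invoke the basic perturbation lemma: the multiplicative part of the bar differential is a word-length--decreasing perturbation of the graded differential, and composing it with the graded contracting homotopies produces a series $\sum_{k}(\text{perturbation}\circ h)^{k}$ whose output is an $S$-linear homotopy inverse of $\Phi_P$ together with the bounding homotopies. I expect the main obstacle to be exactly the convergence of this series — equivalently, the fact that on each fixed element only finitely many terms survive, which I would secure from the perturbation strictly lowering word length against a bounded-below, exhaustive filtration. A purely explicit alternative in the style of the rest of the paper would instead take $\mathrm{id}\otimes\mathfrc g$ in the last bar slot as the candidate inverse and build the bounding homotopy out of $s$, $t$, and the extra-degeneracy operator, verifying $BH+HB=\mathbf 1-(\text{inverse})\circ\Phi_P$ directly; the same filtration bookkeeping is what makes that computation terminate.
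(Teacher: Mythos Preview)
Your reduction in the first paragraph is correct and in fact coincides with the paper's: both arguments identify $\eta$ and the section $\nu$ with $\mathrm{id}\otimes_{\mathcal A}\mathfrc f$ and then deduce the counit case from the unit case via $\epsilon\nu=\mathrm{id}$. Your perturbation step is also the same mechanism the paper uses (the paper writes it as $H=h\sum(-Bh)^i$).

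The gap is in the middle. You assert that $B(P,\mathcal A,\mathcal B)$ is ``the semifree bar model of $P\otimes_{\mathcal A}\mathcal B$'' and treat the augmentation $\beta\colon B(P,\mathcal A,\mathcal B)\to P\otimes_{\mathcal A}\mathcal B$ as an $S$-linear homotopy equivalence. It is not, in general: $B(P,\mathcal A,\mathcal B)$ computes the \emph{derived} tensor product, while $P\otimes_{\mathcal A}\mathcal B$ is the underived one, and there is no extra degeneracy available here (inserting $1\in\mathcal A$ next to the $\mathcal B$-slot kills itself, and there is no $\mathcal A$-valued unit to insert on the $P$-side). Your argument establishes that $\tilde\Phi\colon B(P,\mathcal A,\mathcal A)\to B(P,\mathcal A,\mathcal B)$ is a homotopy equivalence, but from the commuting square $\Phi_P\circ\alpha=\beta\circ\tilde\Phi$ with $\alpha$ and $\tilde\Phi$ homotopy equivalences you only get that $\Phi_P$ is a homotopy equivalence \emph{iff} $\beta$ is; neither is known independently, so the deduction is circular.

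The paper sidesteps this by passing to the mapping cone $C$ of $\mathfrc f$, which is $S$-contractible by hypothesis, and using a \emph{two}-sided bar resolution $\bigoplus_{k,\ell} M\otimes\mathcal A^{\otimes k}\otimes C\otimes\mathcal A^{\otimes\ell}$ of $M\otimes_{\mathcal A}C=\operatorname{cone}(\eta)$. The point is that the extra degeneracy ``stick $\otimes\,e$ on the right'' is \emph{left} $\mathcal A$-linear, so it survives $M\otimes_{\mathcal A}(-)$ and genuinely exhibits the bar complex as $S$-homotopy equivalent to $M\otimes_{\mathcal A}C$. Then the perturbation argument (your last paragraph, applied to the contraction of $C$) shows the bar complex is contractible, hence so is $\operatorname{cone}(\eta)$. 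The fix to your argument is exactly this: replace the one-sided bar $B(P,\mathcal A,\mathcal B)$ by a bar construction whose extra degeneracy is $\mathcal A$-linear on the side being tensored away, or equivalently work with $\operatorname{cone}(\mathfrc f)$ from the start.
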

\begin{proof} 
First consider the unit morphism written as $\eta \colon M \otimes_\mathcal{A} \mathcal{A} \to M \otimes_{\mathcal{A}} \mathcal{B}$.  The main difficulty we 
must address is that the homotopy inverse  $\mathcal{B} \to \mathcal{A}$ is not necessarily $\mathcal{A}$ linear, so we cannot assume it defines a map
$M \otimes_\mathcal{A} \mathcal{B} \to M \otimes_{\mathcal{A}} \mathcal{A}$.  The solution is to move the construction up to a bar-type resolution where all tensors 
are over $S$.  

Writing $C$ for the mapping cone of $\mathfrc{f}$ 
notice that $M \otimes_\mathcal{A} C$ is the mapping cone for $\eta$.  Our problem is equivalent to finding an $S$-linear contracting homotopy for $M \otimes_\mathcal{A} C$.
Consider the bar resolution of $M \otimes_\mathcal{A} C$.  This has terms 
$$
\oplus_{k, \ell \geq 0} M \otimes \mathcal{A}^{\otimes k} \otimes C \otimes \mathcal{A}^{\otimes \ell}
$$
were all tensors are over $S$.
This is equipped with the differential $d+B$ where $B$ is the bar differential and $d$ is the differential coming from $M$, $\mathcal{A}$ and $C$.  Note $dB = Bd$,
and that this resolution is $S$-linearly homotopic to C, where the homotopy  sticks ``$\pm \otimes e$'' to the right end of a tensor.

For the moment ignore the $B$ in the differential.  Since $\mathfrc{f}$ is a $S$-linear homotopy equivalence, we know that $C$ is admits an $S$-linear contracting homotopy.  This can be promoted to an $S$-linear contracting homotopy $h$ for 
$\oplus_{k, \ell \geq 0} M \otimes \mathcal{A}^{\otimes k} \otimes C  \otimes \mathcal{A}^{\otimes \ell}$ equipped only with the differential $d$.  

Finally, we can promote $h$ to a contracting homotopy $H$ for $d+B$ by setting
$$
H = h \sum_{i =0}^\infty (-Bh)^i.
$$
This is well defined since for any element $x$ in the resolution there is an $j$ such that $(-Bh)^jx=0$.  We leave it to the reader to verify that $\mathbf{1} = dh+hd$ implies $\mathbf{1} = (d+B)H+H(d+B)$ under the assumption $dB=Bd$.

Now for the counit morphism $\epsilon \colon N  \otimes_{\mathcal{A}} \mathcal{B} \to N$.  In this case we do have a well-defined map back $\sigma \colon N \to N \otimes_\mathcal{A} \mathcal{B}$
sending $n \mapsto n \otimes e$.  The composition $\epsilon \sigma$ is the identity on $N$, so $\epsilon$ is a right inverse to $\sigma$.  We will show that $\sigma$ is a homotopy invertible, which proves that 
$\epsilon$ is also homotopy invertible. 

Observe that under the identification $N = N \otimes_\mathcal{A} \mathcal{A}$, the map $\sigma$ becomes the map 
$$N \otimes_\mathcal{A} \mathcal{A} \to N \otimes_\mathcal{A} \mathcal{B}$$
induced by $\mathfrc{f}$.  We can now run the argument used for the unit morphism to produce an $S$-linear homotopy inverse.
\end{proof}

\subsection{The homotopy inversion theorem for $\mathbf{A}_\infty$-modules}

We provide a proof of the homotopy inversion theorem for modules.  This proof follows closely the proof for algebras in \cite{fooo}.

\begin{rem}
The well-definedness of most operations below is a consequence of the fact
that the spaces $\operatorname{Hom}_S(M \otimes A[1]^{\otimes \geq k+1}, N)$
form an {\bf ideal}.  By this it is meant that they are closed under pre- and post-composition, and under the operator
$[B, -]$.
\end{rem}

\subsubsection{The obstruction complex.}

The obstruction complex serves two roles.  The first is to determine if a given $A_k$-morphism can be extended to an $A_{k+1}$-morphism, and the second is to determine if an homotopy between two unobstructed  $A_k$-morphisms can be extended along with them.

To be clear, we say something can be {\bf extended} from $A_k$ to $A_{k+1}$ if there is an $A_{k+1}$
object that agrees with the thing in question on all tensors of degree $k$ and below.  In addition,
we use the notation
$$[B, \phi_\bullet] = b_\bullet \circ (\phi_\bullet \odot \mathbf{1}^{\otimes}) -(-1)^\phi \phi_\bullet \circ B.$$

\begin{defn}{\bf (homomorphisms and homotopies)}
An {\bf $A_k$-morphism} is a degree 0 map
$$
\phi_\bullet \colon M \odot A[1]^{\otimes} \to N
$$
such that
$$
[B, \phi_\bullet] 
= \delta_\bullet
\colon M \odot A[1]^{\otimes} \to N 
$$
lies in $\operatorname{Hom}_S(M \odot A[1]^{\otimes \geq k}, N)$.
A {\bf homotopy} is an arbitrary degree -1 map
$$
h_\bullet \colon M \odot A[1]^{\otimes} \to N, 
$$
and two morphisms are said to be $A_k$-homotopic via $h_\bullet$
if
$$
 \phi_\bullet - \psi_\bullet - [B, h_\bullet] \in \operatorname{Hom}_S(M \odot A[1]^{\otimes \geq k}, N).
$$
\end{defn}

\begin{defn}{\bf (obstruction complex)}
$$
C_{\text{Obs}}(k+1)(M,N)  \ = \ \operatorname{Hom}_S(M \odot A[1]^{\otimes \geq k}, N)  \ / \
\operatorname{Hom}_S(M \odot A[1]^{\otimes \geq k+1}, N)
$$
with the codifferential $[B,-]$. The cohomology is  denoted
$$
H_{\text{Obs}}(k+1)(M,N).
$$
\end{defn}

\begin{defn}{\bf (obstruction class of a morphism)}
The  obstruction class  of an $A_k$-morphism $\phi_\bullet$ is the class in 
$$[\operatorname{obs}(\phi_\bullet)] = [ \  [B, \phi_\bullet] \ ] \in H_{\text{Obs}}(k+1)(M,N).$$
Presented this way, it is easy to see it is closed as
$$
[B, \operatorname{obs}(\phi_\bullet)] = [B,[B,\psi_\bullet]].
$$
\end{defn}

\begin{defn}{\bf (obstruction of a homotopy)}
In addition if $h_\bullet$ is a homotopy between $A_k$-morphisms $\phi_\bullet$  and $\psi_\bullet$:
$$
  \phi_\bullet - \psi_\bullet - [B, h_\bullet] \in \operatorname{Hom}_S(M \odot A[1]^{\otimes \geq k}, N)
 $$
 it also defines an obstruction
 $$\operatorname{obs}(\phi_\bullet, \psi_\bullet, h_\bullet) =   \phi_\bullet - \psi_\bullet - [B, h_\bullet] \in C_{\text{Obs}}(k+1)(M,N). $$
 However, this element \emph{might not be closed}.
\end{defn}

\begin{prop} {\bf (unobstructed morphisms extend)}
An $A_k$-morphism $\phi_\bullet \colon M \otimes A[1]^\otimes \to N$, can be extended to an $A_{k+1}$-morphism
if and only if $$0 = [\operatorname{obs}(\phi_\bullet)] \in H_{\text{Obs}}(k+1)(M,N).$$
\end{prop}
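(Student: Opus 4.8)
The plan is to run the standard obstruction argument, organized by tensor degree. Write $\phi_\bullet = \sum_\ell \phi_\ell$ with $\phi_\ell \colon M \odot A[1]^{\otimes \ell} \to N$ and set $\delta_\bullet = [B,\phi_\bullet]$. Since $\phi_\bullet$ is an $A_k$-morphism, $\delta_\bullet \in \operatorname{Hom}_S(M \odot A[1]^{\otimes \geq k}, N)$; hence every component $\delta_\ell$ with $\ell < k$ vanishes, and the class $[\operatorname{obs}(\phi_\bullet)]$ is represented in $C_{\text{Obs}}(k+1)(M,N)$ by the single surviving layer $\delta_k \colon M \odot A[1]^{\otimes k} \to N$, which is closed for the descended differential (this is exactly the remark $[B,\operatorname{obs}(\phi_\bullet)] = [B,[B,\phi_\bullet]] = 0$, using $B^2 = 0$). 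By an \emph{extension} I mean an $A_{k+1}$-morphism obtained from $\phi_\bullet$ by altering only its degree-$k$ component; every equation $\delta_\ell = 0$ with $\ell < k$ depends only on the $\phi_j$ with $j \le \ell < k$, so such an alteration leaves those equations intact, and producing an $A_{k+1}$-morphism amounts exactly to killing $\delta_k$.

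The computational heart is to identify how $\delta_k$ moves under a correction. Let $\psi \in \operatorname{Hom}^0_S(M \odot A[1]^{\otimes k}, N)$ and consider $\phi'_\bullet = \phi_\bullet - \psi$. By $S$-linearity of $[B,-]$, the degree-$k$ layer of $[B,\phi'_\bullet]$ is $\delta_k$ minus the degree-$k$ layer of $[B,\psi]$. Splitting $[B,-]$ into the contribution of the tensor-degree-preserving (linear) part of $B$ and that of its higher multiplications, one checks that only the linear part contributes to the degree-$k$ layer of $[B,\psi]$: the higher multiplications send a map supported in tensor degree $k$ into tensor degrees $\ge k+1$, which are precisely what is quotiented away in $C_{\text{Obs}}(k+1)$. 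Hence, in the obstruction complex, the degree-$k$ layer of $[B,\psi]$ is exactly the obstruction differential applied to $\psi$, and therefore
\[
[\,[B,\phi'_\bullet]\,] \;=\; [\operatorname{obs}(\phi_\bullet)] - [\,[B,\psi]\,] \quad \text{in } H_{\text{Obs}}(k+1)(M,N).
\]
The well-definedness of these restrictions and of $[B,-]$ on the quotient is guaranteed by the ideal property recorded in the Remark; the only delicate point is the Koszul-sign bookkeeping.

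The two implications now drop out. If $[\operatorname{obs}(\phi_\bullet)] = 0$, choose $\psi$ with $\delta_k = [B,\psi]$ in $C_{\text{Obs}}(k+1)$; then $\phi'_\bullet = \phi_\bullet - \psi$ has $[B,\phi'_\bullet]$ vanishing in all degrees $\le k$, i.e. it is an $A_{k+1}$-morphism extending $\phi_\bullet$. Conversely, given any such extension $\phi'_\bullet$, let $\psi$ be the degree-$k$ difference $\phi_k - \phi'_k$; since $\phi'_\bullet$ agrees with $\phi_\bullet$ in degrees below $k$ and is $A_{k+1}$, the displayed identity forces $\delta_k = [B,\psi]$ in $C_{\text{Obs}}(k+1)$, so $[\operatorname{obs}(\phi_\bullet)] = 0$. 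I expect the main obstacle to be the degree bookkeeping of the middle paragraph: verifying that, modulo $\operatorname{Hom}_S(M \odot A[1]^{\otimes \ge k+1}, N)$, the full operator $[B,-]$ reduces to the linear obstruction differential, and that a degree-$k$ correction perturbs $\delta_k$ by precisely this differential and nothing else.
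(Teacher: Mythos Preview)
Your argument is correct and is essentially the same as the paper's: both add a correction $X_{k+1}$ (your $-\psi$) supported in tensor degree $k$ and observe that $\phi_\bullet + X_{k+1}$ being $A_{k+1}$ is exactly the equation $[B,-X_{k+1}] = \operatorname{obs}(\phi_\bullet)$ in $C_{\text{Obs}}(k+1)(M,N)$. Your middle paragraph, arguing that only the linear part of $B$ contributes, is just an explicit verification that the descended $[B,-]$ is well defined on the quotient, which the paper takes as the definition of the obstruction differential; so the extra work is sound but not needed.
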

\begin{proof}
Extend $\phi_\bullet$ to $\phi_\bullet + X_{k+1}$ for an unknown $X_{k+1} \colon M \odot A[1]^{\otimes k} \to N.$
  The condition that $\phi_\bullet + X_{k+1}$ is an $A_{k+1}$ morphism is exactly that
  $[B,\phi_\bullet + X_{k+1}] = 0$ in  $C_{\text{Obs}}(k+1)(M,N)$.  In other words,
  $[B,-X_{k+1}]  = \operatorname{obs}(\phi_\bullet)$ in $C_{\text{Obs}}(k+1)(M,N)$.
\end{proof}

\begin{prop}{\bf (homotopic morphisms have homotopic obstructions)}
If  an $A_k$-morphism 
 $\phi_\bullet
 \colon M \odot A[1]^{\otimes k-1} \to N$ and 
 $\psi_\bullet
 \colon M  \odot A[1]^{\otimes k-1} \to N $ 
are related by a homotopy $h_\bullet$:
 $$
  \phi_\bullet - \psi_\bullet - [B, h_\bullet] \in \operatorname{Hom}_S(M \odot A[1]^{\otimes \geq k}, N)
 $$
then their obstruction classes are homotopic.
\end{prop}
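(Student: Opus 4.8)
The plan is to show directly that the two obstruction cocycles differ by a coboundary in the obstruction complex $C_{\text{Obs}}(k+1)(M,N)$, so that they represent the same class in $H_{\text{Obs}}(k+1)(M,N)$; this is what ``homotopic'' should mean here. Since the codifferential $[B,-]$ is (graded) linear and both $\phi_\bullet$ and $\psi_\bullet$ have degree $0$, the chosen representatives satisfy
$$
\operatorname{obs}(\phi_\bullet) - \operatorname{obs}(\psi_\bullet) = [B, \phi_\bullet] - [B, \psi_\bullet] = [B, \phi_\bullet - \psi_\bullet],
$$
so the whole problem reduces to analyzing $[B, \phi_\bullet - \psi_\bullet]$ in the quotient complex.

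First I would use the hypothesis to split $\phi_\bullet - \psi_\bullet = [B, h_\bullet] + r$, where by definition $r = \phi_\bullet - \psi_\bullet - [B, h_\bullet]$ lies in $\operatorname{Hom}_S(M \odot A[1]^{\otimes \geq k}, N)$. Applying $[B,-]$ and using linearity gives $[B, \phi_\bullet - \psi_\bullet] = [B,[B,h_\bullet]] + [B, r]$. The first term vanishes identically: $[B,-]$ squares to zero on all of $\operatorname{Hom}_S(M \odot A[1]^{\otimes}, N)$, this being exactly the codifferential property $(B^M)^2 = (B^N)^2 = 0$ that makes $\operatorname{Mod}_\infty(A)$ a dg-category (Proposition \ref{proposition-definition-of-mod-infinity-A}). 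Hence $[B,[B,h_\bullet]] = 0$ even though $h_\bullet$ is an arbitrary degree $-1$ map, and we are left with $[B, \phi_\bullet - \psi_\bullet] = [B, r]$.

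It then remains to read $[B, r]$ off in the obstruction complex. By the ideal remark preceding the definitions, the subspaces $\operatorname{Hom}_S(M \odot A[1]^{\otimes \geq j}, N)$ are closed under $[B,-]$; in particular $[B, r]$ again lies in $\operatorname{Hom}_S(M \odot A[1]^{\otimes \geq k}, N)$ and descends to $C_{\text{Obs}}(k+1)(M,N)$, where its class is precisely $[B,-]$ applied to the class $[r]$ of $r$. Thus $\operatorname{obs}(\phi_\bullet) - \operatorname{obs}(\psi_\bullet)$ is a coboundary, giving $[\operatorname{obs}(\phi_\bullet)] = [\operatorname{obs}(\psi_\bullet)]$ in $H_{\text{Obs}}(k+1)(M,N)$, as desired. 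The computation is otherwise formal; the only points demanding care are the two pieces of bookkeeping just invoked, namely that $[B,-]$ squares to zero (to kill the $h_\bullet$ contribution exactly) and that $[B,-]$ preserves the filtration by tensor degree (to make $[B,r]$ a legitimate coboundary in the quotient). I expect the filtration bookkeeping to be the only genuine obstacle, since one must be certain that the $h_\bullet$ term contributes nothing at all rather than merely nothing modulo $\operatorname{Hom}_S(M \odot A[1]^{\otimes \geq k+1}, N)$.
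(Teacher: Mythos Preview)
Your proof is correct and is essentially the paper's argument: the paper also observes that the element $r = \phi_\bullet - \psi_\bullet - [B,h_\bullet]$ (which it names $\operatorname{obs}(\phi_\bullet,\psi_\bullet,h_\bullet)$) satisfies $[B,r] = \operatorname{obs}(\phi_\bullet) - \operatorname{obs}(\psi_\bullet)$ in $C_{\text{Obs}}(k+1)(M,N)$, using $[B,[B,h_\bullet]]=0$. Your only addition is the explicit bookkeeping about the filtration, which is harmless.
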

\begin{proof}
 The obstruction class associated with the triple provides the homotopy.  Indeed,  
$[B, \operatorname{obs}(\phi_\bullet, \psi_\bullet, h_\bullet)]$ is simply  
$$[B,  \phi_\bullet - \psi_\bullet - [B, h_\bullet]] = 
\operatorname{obs}(\phi_\bullet)
-
\operatorname{obs}(\psi_\bullet)
$$
when considered in the obstruction complex.
\end{proof}

\begin{prop}{\bf (obstructions of homotopies between unobstructed morphisms are closed)}
If $\phi_\bullet$ and $\psi_\bullet$ are unobstructed $A_{k}$-morphisms
and $h_\bullet$ is a homotopy between them 
$$
  \phi_\bullet - \psi_\bullet - [B, h_\bullet] \in \operatorname{Hom}_S(M \odot A[1]^{\otimes \geq k}, N)
$$
then 
$\operatorname{obs}(\phi_\bullet, \psi_\bullet, h_\bullet) \in C_{\text{Obs}}(k+1)(M,N)$ is closed.
\end{prop}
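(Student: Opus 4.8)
The plan is to obtain closedness as a one-line consequence of the preceding proposition (\textbf{homotopic morphisms have homotopic obstructions}) combined with the fact that $[B,-]$ is an honest differential. Recall from Proposition \ref{proposition-definition-of-mod-infinity-A} that $[B,-]=\delta$ squares to zero on the morphism complex of $\operatorname{Mod}_\infty(A)$, and that by the ideal property of the spaces $\operatorname{Hom}_S(M \odot A[1]^{\otimes \geq j}, N)$ it descends to a differential on $C_{\text{Obs}}(k+1)(M,N)$. Writing $\operatorname{obs}(\phi_\bullet,\psi_\bullet,h_\bullet) = \phi_\bullet - \psi_\bullet - [B,h_\bullet]$ and applying $[B,-]$, bilinearity together with $[B,[B,h_\bullet]] = \delta^2 h_\bullet = 0$ collapses the expression to
$$
[B,\operatorname{obs}(\phi_\bullet,\psi_\bullet,h_\bullet)] = [B,\phi_\bullet] - [B,\psi_\bullet] = \operatorname{obs}(\phi_\bullet) - \operatorname{obs}(\psi_\bullet)
$$
in $C_{\text{Obs}}(k+1)(M,N)$, which is exactly the identity already recorded. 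So the entire problem reduces to showing that the two obstruction cochains on the right cancel.

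This is where unobstructedness is used. By the proposition \textbf{unobstructed morphisms extend}, each of $\phi_\bullet$ and $\psi_\bullet$ admits an $A_{k+1}$-extension, and I would carry out the argument with these extensions as the chosen representatives. For an $A_{k+1}$-morphism the defining condition improves to $[B,\phi_\bullet] \in \operatorname{Hom}_S(M \odot A[1]^{\otimes \geq k+1}, N)$, so its obstruction cochain $\operatorname{obs}(\phi_\bullet) = [B,\phi_\bullet]$ maps to $0$ in the quotient $C_{\text{Obs}}(k+1)(M,N) = \operatorname{Hom}_S(M \odot A[1]^{\otimes \geq k}, N)/\operatorname{Hom}_S(M \odot A[1]^{\otimes \geq k+1}, N)$, and likewise for $\psi_\bullet$. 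The right-hand side of the display then vanishes identically, and $\operatorname{obs}(\phi_\bullet,\psi_\bullet,h_\bullet)$ is closed.

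The step I expect to require the most care is precisely the cochain-versus-class bookkeeping in the reduction above: ``unobstructed'' names the vanishing of the cohomology class $[\operatorname{obs}(\phi_\bullet)] \in H_{\text{Obs}}(k+1)(M,N)$, whereas closedness needs the obstruction \emph{cochains} themselves, not merely their classes, to cancel. What makes the argument go through is that passing to an $A_{k+1}$-representative upgrades the cohomological vanishing to cochain-level vanishing, and I would verify this passage is legitimate here: an extension alters $\phi_\bullet$ and $\psi_\bullet$ only in tensor degree $\geq k$, hence does not touch the degrees below $k$ that express the bounding relation $\phi_\bullet - \psi_\bullet - [B,h_\bullet] \in \operatorname{Hom}_S(M \odot A[1]^{\otimes \geq k}, N)$, so $h_\bullet$ still serves as a homotopy between the extended morphisms. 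Once the representatives are fixed so that both obstruction cochains are literally zero, everything else is the formal computation of the first paragraph, relying only on $\delta^2 = 0$ and the ideal property.
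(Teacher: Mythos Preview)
The paper omits this proof, so there is nothing to compare against directly. Your core computation is correct and is essentially all that is needed: $[B,\operatorname{obs}(\phi_\bullet,\psi_\bullet,h_\bullet)] = \operatorname{obs}(\phi_\bullet) - \operatorname{obs}(\psi_\bullet)$ in $C_{\text{Obs}}(k+1)$, by $\delta^2 = 0$ and the ideal property. You also correctly isolate the cochain-versus-class issue as the delicate point.

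Where the argument slips is in your resolution of that issue. Replacing $\phi_\bullet$ by an $A_{k+1}$-extension $\tilde\phi_\bullet = \phi_\bullet + X_{k+1}$ introduces a correction $X_{k+1} \in \operatorname{Hom}_S(M \odot A[1]^{\otimes k}, N)$, and this term is precisely a (generally nonzero) element of the quotient $C_{\text{Obs}}(k+1) = \operatorname{Hom}_S(M \odot A[1]^{\otimes \geq k}, N)/\operatorname{Hom}_S(M \odot A[1]^{\otimes \geq k+1}, N)$. Hence
\[
\operatorname{obs}(\tilde\phi_\bullet,\tilde\psi_\bullet,h_\bullet) \;=\; \operatorname{obs}(\phi_\bullet,\psi_\bullet,h_\bullet) + X_{k+1} - Y_{k+1}
\]
in $C_{\text{Obs}}(k+1)$, and you have established closedness of the \emph{modified} obstruction, not the one in the statement. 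What your argument actually proves is the closedness asserted in the \emph{next} proposition (for honest $A_{k+1}$-morphisms), which is indeed the form invoked in the inversion theorem. If ``unobstructed $A_k$-morphism'' is to be read as ``already $A_{k+1}$'' --- so that $\operatorname{obs}(\phi_\bullet)$ and $\operatorname{obs}(\psi_\bullet)$ vanish as cochains in $C_{\text{Obs}}(k+1)$, not merely as classes --- then your first-paragraph computation finishes the proof on the spot and the detour through extensions is unnecessary; under the weaker reading, the right-hand side $\operatorname{obs}(\phi_\bullet)-\operatorname{obs}(\psi_\bullet)$ is only a difference of exact cochains, which need not be zero.
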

\begin{proof}
Omitted.
\end{proof}

\begin{prop} {\bf (homotopies with exact obstructions extend)}
If $A_{k+1}$-morphisms
 $\phi_\bullet
 \colon M  \odot A[1]^{\otimes } \to N$ and 
 $\psi_\bullet
 \colon M  \odot A[1]^{\otimes } \to N $ 
are 
 related by an homotopy $h_\bullet$ as $A_k$-morphisms:
 $$
  \phi_\bullet - \psi_\bullet - [B, h_\bullet] \in \operatorname{Hom}_S(M \odot A[1]^{\otimes \geq k}, N)
 $$
 then we know $\operatorname{obs}(\phi_\bullet, \psi_\bullet, h_\bullet)$
is closed and
 $h_\bullet$ can be extended to a homotopy between 
 $\phi_\bullet$ and $\psi_\bullet$ as $A_{k+1}$-morphisms
 if and only if   
 $$
0 = [ \operatorname{obs}(\phi_\bullet, \psi_\bullet, h_\bullet) ] \in 
  H_{\text{Obs}}(k+1)(M,N).
$$
\end{prop}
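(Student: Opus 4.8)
The plan is to mimic, one cohomological degree lower, the proof of the proposition that unobstructed morphisms extend. First I would dispense with the closedness claim. Since $\phi_\bullet$ and $\psi_\bullet$ are $A_{k+1}$-morphisms, they are in particular unobstructed as $A_k$-morphisms (their level-$(k+1)$ obstruction classes already vanish), so the proposition asserting that obstructions of homotopies between unobstructed morphisms are closed applies verbatim and shows $\operatorname{obs}(\phi_\bullet, \psi_\bullet, h_\bullet)$ is a cocycle in $C_{\text{Obs}}(k+1)(M,N)$. This is exactly what makes the cohomology class $[\operatorname{obs}(\phi_\bullet, \psi_\bullet, h_\bullet)] \in H_{\text{Obs}}(k+1)(M,N)$ meaningful, and it is the content of the ``we know $\dots$ is closed'' clause.

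For the main equivalence, I would parametrize the possible extensions of $h_\bullet$. An extension is a homotopy of the form $h_\bullet + Y$, where $Y \colon M \odot A[1]^{\otimes k} \to N$ is an unknown degree $-1$ map supported in the single tensor degree $k$ (exactly as the unknown $X_{k+1}$ modifies $\phi_\bullet$ in the morphism case, but now the unknown sits in cohomological degree $-1$ rather than $0$, matching the degree of $h_\bullet$). The requirement that $h_\bullet + Y$ be an $A_{k+1}$-homotopy between $\phi_\bullet$ and $\psi_\bullet$ is
$$
\phi_\bullet - \psi_\bullet - [B, h_\bullet + Y] \in \operatorname{Hom}_S(M \odot A[1]^{\otimes \geq k+1}, N),
$$
and since $Y$ is supported in degree $k$, reducing this condition modulo $\operatorname{Hom}_S(M \odot A[1]^{\otimes \geq k+1}, N)$ collapses it to the single equation
$$
[B, Y] = \operatorname{obs}(\phi_\bullet, \psi_\bullet, h_\bullet) \quad \text{in } C_{\text{Obs}}(k+1)(M,N)
$$
(up to an overall sign, as in the $[B,-X_{k+1}] = \operatorname{obs}(\phi_\bullet)$ computation). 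Hence a suitable $Y$ exists precisely when $\operatorname{obs}(\phi_\bullet, \psi_\bullet, h_\bullet)$ is a coboundary, i.e. precisely when its class in $H_{\text{Obs}}(k+1)(M,N)$ vanishes; the forward direction extracts such a $Y = \tilde h_\bullet - h_\bullet$ from a given extension $\tilde h_\bullet$, and the reverse direction builds the extension from a cobounding $Y$.

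The step I expect to require the most care is the reduction that produces the displayed equation: one must check that modifying $h_\bullet$ only in tensor degree $k$ leaves the agreement in degrees $<k$ untouched, so that $h_\bullet + Y$ is genuinely an extension, and that the degree-$k$ component of $[B, h_\bullet + Y]$ differs from that of $[B, h_\bullet]$ by exactly the class $[B, Y]$ computed with the induced differential on the obstruction complex. This is where the \emph{ideal} property recorded earlier, namely that $\operatorname{Hom}_S(M \odot A[1]^{\otimes \geq k+1}, N)$ is closed under $[B, -]$ as well as under pre- and post-composition, does the real work: it guarantees that, modulo the subcomplex, the only surviving contribution is the tensor-degree-preserving part of $[B,-]$ applied to $Y$. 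The remaining sign bookkeeping is routine and, as elsewhere in the paper, is most safely confirmed by the string-diagram method.
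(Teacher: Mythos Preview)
Your proposal is correct and follows essentially the same approach as the paper: the paper's proof simply states that an extension exists if and only if one can solve $\phi_\bullet - \psi_\bullet = [B, h_\bullet + X_{k+1}]$ in $C_{\text{Obs}}(k+1)(M,N)$, which is exactly your equation $[B, Y] = \operatorname{obs}(\phi_\bullet, \psi_\bullet, h_\bullet)$ after rearranging. Your write-up is considerably more detailed than the paper's one-line argument, but the underlying idea---parametrize extensions by a degree-$k$ correction term and reduce the extension condition to a coboundary equation in the obstruction complex---is identical.
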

\begin{proof}
In order to have an extension of the homotopy we must solve for $X_{k+1}$ in the  equation
$$
\phi_\bullet - \psi_\bullet = [B, h_\bullet + X_{k+1}]
$$
in $C_{\text{Obs}}(k+1)(M,N)$.
\end{proof}

\subsubsection{Computational aspects.}

There are two convenient computational facts relevant to determining if an obstruction class is zero.  The first is that  the obstruction complex has the structure of a dg-bimodule, and the second is that the formation of obstructions
acts as a derivation for which the $A_{k+1}$-morphisms are ``constants''.

\begin{prop}{\bf ($\operatorname{Mod}_\infty(A)$-action)}
$\operatorname{Mod}_\infty(A)$ acts on $C_{\text{Obs}}(k+1)$ as to make it a dg-bimodule.  That is to say with 
$\phi_\bullet \colon L \colon A[1]^\otimes \to M$, $\psi_\bullet \colon N \odot A[1]^\otimes \to P$ and $c_\bullet \in C_{\text{Obs}}(k+1)(M,N)$, we have
$$
\phi_\bullet \circ c_\bullet \circ \psi_\bullet \in C_{\text{Obs}}(k+1)(L,P)
$$
and 
$$
[B, \phi_\bullet \circ c_\bullet \circ \psi_\bullet] = \ 
[B, \phi_\bullet ] \circ c_\bullet \circ \psi_\bullet  \ +  \ 
(-1)^\phi \phi_\bullet \circ [B, c_\bullet ] \circ \psi_\bullet \ + \ 
(-1)^{\phi}(-1)^c \phi_\bullet \circ c_\bullet \circ [B, \psi_\bullet].
$$
Furthermore, any
$
\phi_\bullet \in \operatorname{Hom}_S(L \odot A^{\otimes \geq 1}, M) 
$
and 
$
\psi_\bullet \in \operatorname{Hom}_S(N \otimes A^{\otimes \geq 1}, P)
$
act trivially.
\end{prop}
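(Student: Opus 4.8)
The plan is to inherit the entire structure from the dg-category $\operatorname{Mod}_\infty(A)$ of Proposition \ref{proposition-definition-of-mod-infinity-A}, passing it through the subquotient that defines $C_{\text{Obs}}(k+1)$. The left and right actions are nothing but pre- and post-composition: for $\phi_\bullet \colon L \to M$ and $\psi_\bullet \colon N \to P$, the element $\phi_\bullet \circ c_\bullet \circ \psi_\bullet$ is the triple composite $L \to M \to N \to P$, computed by iterating the composition rule $\psi_\bullet \circ \phi_\bullet = \psi_\bullet(\phi_\bullet \odot \mathbf{1}^\otimes)$. Because composition in $\operatorname{Mod}_\infty(A)$ is associative and the two actions are composition on opposite ends, associativity of each action and the commuting of the left and right actions are automatic. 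The only genuinely new content is therefore threefold: (i) that the actions respect the defining filtration and so descend to the subquotient, (ii) the Leibniz identity, and (iii) the vanishing of the actions of morphisms with no linear part.

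First I would dispatch the descent by a tensor-degree count. Writing $\phi^{(i)}$ for the part of $\phi_\bullet$ consuming $i$ factors of $A[1]$, the composite $c_\bullet(\phi_\bullet \odot \mathbf{1}^\otimes)$ expands, on an argument carrying $a$ factors of $A[1]$, into a sum of terms built from $c^{(a-i)}$ and $\phi^{(i)}$. A representative $c_\bullet$ of a class in $C_{\text{Obs}}(k+1)$ lies in $\operatorname{Hom}_S(M \odot A[1]^{\otimes \geq k}, N)$, so $c^{(a-i)} = 0$ unless $a-i \geq k$; hence $c_\bullet \circ \phi_\bullet$ lands in $\operatorname{Hom}_S(L \odot A[1]^{\otimes \geq k}, N)$, and the same inequality with $k+1$ shows that altering $c_\bullet$ by an element of $\operatorname{Hom}_S(M \odot A[1]^{\otimes \geq k+1}, N)$ alters the composite only within the corresponding subspace over $L$. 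Thus pre-composition, and symmetrically post-composition, is well defined on the subquotient and yields $\phi_\bullet \circ c_\bullet \circ \psi_\bullet \in C_{\text{Obs}}(k+1)(L,P)$.

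For the Leibniz identity I would iterate the two-fold derivation property of $[B,-]$ over composition, which is exactly the dg-category axiom recorded in Proposition \ref{proposition-definition-of-mod-infinity-A} via $[B,\Phi] = (\delta\phi_\bullet)\odot\mathbf{1}^\otimes$ together with $\Theta = \Psi\Phi$. Applying this first to the outer and then to the inner composition produces precisely the three stated terms, the Koszul signs $1$, $(-1)^\phi$, $(-1)^\phi(-1)^c$ being those of a dg-bimodule; since $[B,-]$ preserves the filtration, the identity descends verbatim to $C_{\text{Obs}}(k+1)$. The triviality statement is once more a degree count: if $\phi_\bullet$ has no linear part, so $\phi^{(0)} = 0$, then every surviving term of $c_\bullet \circ \phi_\bullet$ obeys $a-i \geq k$ with $i \geq 1$, forcing $a \geq k+1$, so the composite already lies in $\operatorname{Hom}_S(L \odot A[1]^{\otimes \geq k+1}, N)$ and vanishes in the subquotient; the argument for $\psi_\bullet$ is identical on the other end.

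The main obstacle is the sign bookkeeping in the Leibniz rule: the honest derivation property of $[B,-]$ accumulates Koszul signs from one end of the composite, whereas the bimodule normalization demanded by the statement accumulates them from the other, so the two conventions must be reconciled by the standard opposite-category sign twist. Rather than chase these signs by hand, I would confirm that exactly the factors $1$, $(-1)^\phi$, $(-1)^\phi(-1)^c$ appear by drawing the string diagrams for the three terms and counting crossings, as recommended in the preliminaries, which makes the single relevant sign on each term transparent.
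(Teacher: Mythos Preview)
The paper's own proof of this proposition is simply ``Omitted,'' so there is nothing substantive to compare against; your argument is correct and is exactly the routine verification the authors had in mind---composition in $\operatorname{Mod}_\infty(A)$ respects the tensor-degree filtration, $[B,-]$ is a derivation for that composition, and the degree count kills any factor with vanishing linear part. Your observation that the displayed signs are those of a bimodule action rather than raw iterated composition (and hence require the opposite-category twist) is the only nontrivial point, and you handle it appropriately.
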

\begin{proof}
Omitted.
\end{proof}

\begin{prop}{\bf (obstruction derivation)}
The assignment of an $A_k$-morphism to its obstruction class is a derivation for which $A_{k+1}$-morphisms act as constants.  To be clear, if $\phi_\bullet$ and $\psi_\bullet$ are $A_k$-morphisms and both
$\alpha_\bullet$ and $\beta_\bullet$ are $A_{k+1}$-morphisms, then
$$
[B, \alpha_\bullet \circ \phi_\bullet \circ \psi_\bullet \circ \beta_\bullet] = \alpha_\bullet \circ ([B, \phi_\bullet] \circ \psi_\bullet +  (-1)^\phi \phi_\bullet \circ [B, \psi_\bullet]) \circ \beta_\bullet
$$
\end{prop}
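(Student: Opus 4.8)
The plan is to deduce the formula from the graded Leibniz rule for the operator $[B,-]$, combined with the fact that an $A_{k+1}$-morphism is precisely one whose image under $[B,-]$ vanishes in the obstruction complex. Since $B$ is an odd operator, the super-commutator $[B,-]$ is a graded derivation of the composition product $\circ$; this is exactly the content of the preceding $\operatorname{Mod}_\infty(A)$-action proposition, which I would simply iterate from a triple to a quadruple product. Concretely, I would first expand, in the full $\operatorname{Hom}$-complex,
\begin{align*}
[B, \alpha_\bullet \circ \phi_\bullet \circ \psi_\bullet \circ \beta_\bullet]
&= [B,\alpha_\bullet]\circ \phi_\bullet \circ \psi_\bullet \circ \beta_\bullet
+ \alpha_\bullet \circ [B, \phi_\bullet] \circ \psi_\bullet \circ \beta_\bullet \\
&\quad + \alpha_\bullet \circ \phi_\bullet \circ [B, \psi_\bullet] \circ \beta_\bullet
+ \alpha_\bullet \circ \phi_\bullet \circ \psi_\bullet \circ [B, \beta_\bullet],
\end{align*}
where every Koszul sign equals $+1$ because all four maps are degree $0$; in particular the factor $(-1)^\phi$ in the statement is $1$ and is kept only for bookkeeping consistency with the bimodule formula.

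Next I would argue that the first and last terms vanish once we pass to $C_{\text{Obs}}(k+1)$. Because $\alpha_\bullet$ and $\beta_\bullet$ are $A_{k+1}$-morphisms, by definition $[B,\alpha_\bullet]$ and $[B,\beta_\bullet]$ lie in $\operatorname{Hom}_S(M \odot A[1]^{\otimes \geq k+1}, N)$. By the ideal property recorded in the Remark at the start of this subsection — these subspaces are closed under pre- and post-composition — the full composites $[B,\alpha_\bullet]\circ \phi_\bullet \circ \psi_\bullet \circ \beta_\bullet$ and $\alpha_\bullet \circ \phi_\bullet \circ \psi_\bullet \circ [B,\beta_\bullet]$ again land in $\operatorname{Hom}_S(\cdots A[1]^{\otimes \geq k+1}\cdots)$, hence represent the zero class in the obstruction complex. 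This is exactly the sense in which the $A_{k+1}$-morphisms act as constants for the derivation.

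Discarding those two terms leaves
\begin{align*}
[B, \alpha_\bullet \circ \phi_\bullet \circ \psi_\bullet \circ \beta_\bullet]
&\equiv \alpha_\bullet \circ [B, \phi_\bullet] \circ \psi_\bullet \circ \beta_\bullet
+ \alpha_\bullet \circ \phi_\bullet \circ [B, \psi_\bullet] \circ \beta_\bullet \\
&= \alpha_\bullet \circ \big([B, \phi_\bullet]\circ \psi_\bullet + (-1)^\phi \phi_\bullet \circ [B, \psi_\bullet]\big) \circ \beta_\bullet
\end{align*}
in $C_{\text{Obs}}(k+1)$, which is the asserted identity. I do not expect any serious obstacle here: the one point requiring genuine care is the well-definedness of each expression at the level of the obstruction complex, since the Leibniz expansion above is an identity only in the ambient $\operatorname{Hom}$-complex and one must invoke the ideal property both to descend it to $C_{\text{Obs}}(k+1)$ and to kill the two outer terms. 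The degree-$0$ hypothesis on all four morphisms trivializes every sign, so beyond this bookkeeping the argument is purely formal.
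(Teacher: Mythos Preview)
Your argument is correct, and there is nothing to compare against: the paper simply writes ``Omitted'' for this proof. The Leibniz expansion via the preceding $\operatorname{Mod}_\infty(A)$-action proposition together with the ideal property is exactly the intended routine verification.
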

\begin{proof}
Omitted.
\end{proof}

\subsubsection{Homotopy inversion theorem for $\mathbf{A}_\infty$-modules}

The main application the homotopy inversion theorem for  $\mathbf{A}_\infty$-modules is the fact that when an $\mathbf{A}_\infty$-algebra are uncurved, a morphism $\phi_\bullet$ of modules descends to a isomorphism in $\operatorname{H}^0(\operatorname{Mod}_\infty(A))$
if and only if $\phi_1$ is a homotopy equivalence of complexes of $S$-modules.

There are two main tricks exploited in the proof.  The first is that, by virtue of being an $\mathbf{A}_\infty$-morphism, $\phi_\bullet \circ -$
induces cochain homotopy equivalences of obstruction complexes.  This is used twice: once to argue the unobstructedness of $\phi_\bullet$, and second to contrive
a ``correction'' $\hat{\psi}_\bullet$ to the na\"ive extension $\tilde{\psi}_\bullet$ of $\psi_\bullet$ in such a way that the  $A_k$-homotopy $h_\bullet$ extends to an $A_{k+1}$-homotopy 
between  $\mathbf{1}$ and $\phi_\bullet(\hat{\psi}_\bullet \odot \mathbf{1}^\otimes)$.

\begin{thm}{\bf homotopy inversion theorem for $\mathbf{A}_\infty$-modules}
\label{theorem:inversion-for-modules}
Given an $\mathbf{A}_\infty$-morphism $\phi_\bullet \colon M \odot A[1]^{\otimes} \to N$, an $A_k$-morphism
$\psi_\bullet \colon N \odot A[1]^{\otimes } \to M$ and an $A_k$-homotopy
$h_\bullet$: 
$$
\mathbf{1} - \phi_\bullet (\psi_\bullet \odot \mathbf{1}^\otimes) - [B,h_\bullet] \in  \operatorname{Hom}_S(N \odot A[1]^{\otimes \geq k}, N)
$$
and an $S$-linear map of degree $-1$
$$
\ell \colon M \to M
$$
such that 
$$
\mathbf{1} - \psi_\bullet (\phi_\bullet \odot \mathbf{1}^{\otimes}) - [B,\ell] \in  \operatorname{Hom}_S(M \odot A[1]^{\otimes \geq 1}, M)
$$
then there are $\mathbf{A}_\infty$-extensions  $\hat{\psi}_\bullet$ and $\hat{h}_\bullet$  such that
$$
\mathbf{1} - \phi_\bullet (\hat{\psi}_\bullet \odot \mathbf{1}^\otimes) = [B,\hat{h}_\bullet] .
$$
\end{thm}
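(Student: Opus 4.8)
The plan is to construct $\hat\psi_\bullet$ and $\hat h_\bullet$ by induction on the tensor degree $k$, promoting the given $A_k$-data to $A_{k+1}$-data and passing to the limit. The structural fact driving every step is that, because $\phi_\bullet$ is a genuine $\mathbf{A}_\infty$-morphism, post-composition $\phi_\bullet \circ -$ is a cochain map on the obstruction complexes: the derivation formula for the $\operatorname{Mod}_\infty(A)$-action collapses to $[B, \phi_\bullet \circ c_\bullet] = \pm\,\phi_\bullet \circ [B, c_\bullet]$ once $[B,\phi_\bullet]=0$. Moreover, on each $C_{\text{Obs}}(k+1)$ this map reduces to post-composition by the linear part $\phi_1$; since the hypotheses on $\ell$ and $h_\bullet$ (restricted to tensor degree $0$) make $\phi_1$ and $\psi_1$ homotopy inverses of cocomplexes of $S$-modules, $\phi_\bullet \circ -$ is a homotopy equivalence, hence an isomorphism on $H_{\text{Obs}}(k+1)$. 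I will establish this once and invoke it twice (the ``first trick'' of the remark).

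For the inductive step, assume an $A_k$-morphism $\psi_\bullet$ and an $A_k$-homotopy $h_\bullet$ as in the hypotheses. I first show $\psi_\bullet$ is unobstructed at level $k+1$. Since $\phi_\bullet\circ\psi_\bullet$ is $A_k$-homotopic via $h_\bullet$ to the identity, which is unobstructed, the proposition on homotopic morphisms having homotopic obstructions gives $[\operatorname{obs}(\phi_\bullet\circ\psi_\bullet)]=0$ in $H_{\text{Obs}}(k+1)(N,N)$. By the obstruction-derivation property (with the $A_{k+1}$-morphism $\phi_\bullet$ acting as a constant) one has $\operatorname{obs}(\phi_\bullet\circ\psi_\bullet)=\pm\,\phi_\bullet\circ\operatorname{obs}(\psi_\bullet)$ in $C_{\text{Obs}}(k+1)$. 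As $\phi_\bullet\circ-$ is injective on cohomology, $[\operatorname{obs}(\psi_\bullet)]=0$, so by the proposition on unobstructed morphisms extending, $\psi_\bullet$ admits an $A_{k+1}$-extension $\tilde\psi_\bullet$.

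It remains to arrange that the homotopy extends. Now $\phi_\bullet\circ\tilde\psi_\bullet$ and $\mathbf{1}$ are unobstructed $A_{k+1}$-morphisms that are $A_k$-homotopic via $h_\bullet$, so $\operatorname{obs}(\mathbf{1},\phi_\bullet\circ\tilde\psi_\bullet,h_\bullet)$ is a closed class in $H_{\text{Obs}}(k+1)(N,N)$. The second use of the key fact (the ``correction'' of the remark) is to kill this class: replacing $\tilde\psi_\bullet$ by $\hat\psi_\bullet=\tilde\psi_\bullet+X_{k+1}$ for a cocycle $X_{k+1}\in C_{\text{Obs}}(k+1)(N,M)$ keeps it an $A_{k+1}$-extension of $\psi_\bullet$ and shifts the homotopy-obstruction class by $\pm[\phi_\bullet\circ X_{k+1}]$. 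Because $\phi_\bullet\circ-$ is surjective on cohomology, I can choose $X_{k+1}$ so that the shifted class vanishes; the proposition on homotopies with exact obstructions extending then produces an $A_{k+1}$-homotopy extending $h_\bullet$ between $\mathbf{1}$ and $\phi_\bullet\circ\hat\psi_\bullet$. Iterating over $k$ assembles the compatible extensions into $\mathbf{A}_\infty$-objects $\hat\psi_\bullet$ and $\hat h_\bullet$ with $\mathbf{1}-\phi_\bullet(\hat\psi_\bullet\odot\mathbf{1}^\otimes)=[B,\hat h_\bullet]$.

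The main obstacle I anticipate is the clean bookkeeping in the correction step: verifying that an adjustment $X_{k+1}$ chosen to repair the \emph{homotopy} obstruction does not disturb the fact that $\hat\psi_\bullet$ remains an $A_{k+1}$-morphism (this is precisely why $X_{k+1}$ must be a cocycle in the obstruction complex, not merely an arbitrary degree-$k$ term), together with confirming that post-composition with $\phi_\bullet$ genuinely acts as the isomorphism induced by $\phi_1$ on each $H_{\text{Obs}}(k+1)$ rather than mixing in lower-order contributions. Once the homotopy-equivalence statement for $\phi_\bullet\circ-$ on obstruction complexes is pinned down, the two applications of its injectivity and surjectivity are routine.
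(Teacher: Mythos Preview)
Your proposal is correct and follows essentially the same approach as the paper's proof: establish that $\phi_\bullet\circ-$ is a homotopy equivalence on each $C_{\text{Obs}}(k+1)$, use injectivity on cohomology to extend $\psi_\bullet$ to an $A_{k+1}$-morphism $\tilde\psi_\bullet$, then use surjectivity to choose a closed correction $X_{k+1}$ so that the homotopy obstruction for $h_\bullet$ becomes exact, and iterate. Your explicit remark that $X_{k+1}$ must be a cocycle in $C_{\text{Obs}}(k+1)(N,M)$ so that $\hat\psi_\bullet$ remains $A_{k+1}$ is a point the paper leaves implicit, and your observation that $\phi_\bullet\circ-$ reduces to $\phi_1\circ-$ on the obstruction complex is the concrete reason the hypotheses on $h_\bullet$ and $\ell$ at tensor degree~$0$ suffice; the paper instead invokes the full $\psi_\bullet\circ-$ as the homotopy inverse with $h_\bullet\circ-$ and $\ell\circ-$ as the homotopies, which amounts to the same thing.
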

\begin{proof}
It suffices to show that we can extend these to $A_{k+1}$-objects.

The map $\phi_\bullet \circ - \colon C_{ob}(k+1)(N,M) \to C_{ob}(k+1)(N,N)$
is a homotopy equivalence and commutes with the formation of obstruction classes of morphisms.  
Commutation follows from the fact that $\phi_\bullet$ is $\mathbf{A}_\infty$ and thus a fortiori $A_{k+1}$.
For the homotopy equivalence, the inverse is 
$$
\psi_\bullet \circ - \colon C_{ob}(k+1)(N,N) \to C_{ob}(k+1)(N,M),
$$
and the equations 
$$
\mathbf{1} - \phi_\bullet (\psi_\bullet \odot \mathbf{1}^\otimes) - [B,h_\bullet] \equiv 0
$$
and
$$
\mathbf{1} - \psi_\bullet (\phi_\bullet \odot \mathbf{1}^{\otimes}) - [B,\ell] \equiv 0.
$$
give the homotopies.

Consequently $\psi_\bullet$  extends to an $A_{k+1}$-morphism $\tilde{\psi}_\bullet$.
 The extension of $\psi_\bullet$ is guaranteed because the cochain homotopy equivalence $\phi_\bullet \circ -$
carries it to  $\phi_\bullet \circ (\psi_\bullet \odot \mathbf{1}^\otimes)$ and this is homotopic to $\mathbf{1}$.  The obstruction class of 
$\mathbf{1}$ is zero and so all of these objects extend.

Unfortunately, the  $A_k$-homotopy $h_\bullet$ between $\mathbf{1}$ and the composition $\phi_\bullet (\tilde{\psi}_\bullet \odot \mathbf{1}^\otimes)$
might not extend. However, it is closed and the trick is to add a term $X_{k+1}$ to $\tilde{\psi}_\bullet$ so that 
$\phi_\bullet ((\tilde{\psi}_\bullet - X_{k+1}) \odot \mathbf{1}^\otimes)$ is $A_{k+1}$-homotopic to $\mathbf{1}$.  To do this,
consider $X_{k+1} \colon N \odot A[1]^{\otimes \geq k} \to M$ such that 
$$
[\phi_\bullet (X_{k+1} \odot \mathbf{1}^\otimes) ] = [ \
\mathbf{1} - \phi_\bullet (\tilde{\psi}_\bullet \odot \mathbf{1}^\otimes) - [B, h_\bullet]
\ ] .
$$
This means there is an odd map 
$Y_{k+1} \colon N \odot A[1]^{\otimes \geq k} \to N$
such that 
$$
\mathbf{1} - \phi_\bullet (\tilde{\psi}_\bullet \odot \mathbf{1}^\otimes) - [B, h_\bullet] - \phi_\bullet (X_{k+1} \odot \mathbf{1}^\otimes) = 
[B, Y_{k+1}]
$$
in $C_{ob}(k+1)(N,N)$.   Finally, rearranging terms gives us what we need:
$$
\mathbf{1} - \phi_\bullet ((\tilde{\psi}_\bullet - X_{k+1}) \odot \mathbf{1}^\otimes) - [B, h_\bullet+Y_{k+1}] \in \operatorname{Hom}_S(N \odot A[1]^{\otimes \geq k+1} , N)
$$
with our extensions $\hat{\psi}_\bullet = \tilde{\psi}_\bullet - X_{k+1}$ and $\hat{h}_\bullet = h_\bullet+Y_{k+1}.$
\end{proof}

\section{Homotopy theory.}
\label{section-homotopy-and-consequences}

The results here allow for a homotopical treatment of 
$H^0(\operatorname{Mod_\infty}(A))$.  This
leads to a sophisticated approach to the notion of a homotopy equivalence of $\mathbf{A}_\infty$-algebras:
i.e. morphisms $f$ for which the Quillen adjunction $(L_f, R_f)$ below is a Quillen equivalence.
We describe this here 
essentially without proof, since the work required 
to verify the statements below is routine and follows standard
constructions.  We 
 refer the interested reader to Armstrong \cite{armstrong}.

{\bf Basic statement.} The subcategory of strict morphisms can be considered as a functor 
$$
\operatorname{Mod_\infty^{\text{st}}} \colon \mathbf{Alg}_{\infty} \to \mathbf{ModCat}
$$
from the category 
of strictly unital curved $\mathbf{A}_\infty$-algebras to the category of Quillen model categories and Quillen adjunctions.  This functor comes with an equivalence 
$$
\operatorname{Mod_\infty^{\text{st}}}(A)[\mathcal{W}_A^{-1}] \xrightarrow{\sim}  H^0(\operatorname{Mod_\infty}(A))
$$
from the homotopy category of the strict morphisms 
to the $0^{\text{th}}$-cohomology category of  $\operatorname{Mod_\infty}(A).$

{\bf Homotopy theory.} The output of the functor $\operatorname{Mod_\infty^{\text{st}}}$  evaluated on an $\mathbf{A}_\infty$-algebra $A$ is 
\begin{itemize}
\item the subcategory $\operatorname{Mod_\infty^{\text{st}}}(A) \subseteq 
\operatorname{Mod_\infty}(A)$ of strict morphisms  between right $A$-modules,
\item a subcategory of weak equivalences $\mathcal{W}_A \subseteq \operatorname{Mod_\infty^{\text{st}}}(A)$ made up of those morphisms whose image under functor
$$
\operatorname{Mod_\infty^{\text{st}}}(A) \to  H^0(\operatorname{Mod_\infty}(A))
$$
is an isomorphism, 
\item a subcategory of fibrations $\mathcal{F}_A \subseteq \operatorname{Mod_\infty^{\text{st}}}(A)$ 
made up of morphisms which are split surjections of $S$-modules, and
\item the  functor
$$
Q_A \colon \operatorname{Mod_\infty^{\text{st}}}(A) \to \operatorname{Mod_\infty^{\text{st}}}(A)
$$
with the natural transformation $\epsilon \colon Q_A \Rightarrow \mathbf{1}$  (defined in Theorem \ref{theorem-definition-of-Q}).
\end{itemize}
This data defines a model structure on $\operatorname{Mod_\infty^{\text{st}}}(A)$ for which all objects are fibrant and $Q_A$ is a cofibrant replacement functor.

{\bf Functoriality.} To a morphism of algebras $f_\bullet \colon A \to A'$ the functor assigns
\begin{itemize}
\item a restriction of scalars functor (Proposition \ref{proposition-A-infinity-restriction-of-scalars}) 
$$R_f \colon \operatorname{Mod_\infty^{\text{st}}}(A') \to \operatorname{Mod_\infty^{\text{st}}}(A),$$
\item which admits a left adjoint (Definition \ref{proposition-adjoint-to-restriction-of-scalars})
$$L_f \colon \operatorname{Mod_\infty^{\text{st}}}(A) \to \operatorname{Mod_\infty^{\text{st}}}(A').$$
\end{itemize}
Together $(L_f, R_f)$ is a Quillen adjunction\footnote{In addition, these functors are triangulated.} and the assignment $f_\bullet \mapsto 
(L_f, R_f)$ is functorial.

{\bf Compatibility.} 
When the $\mathbf{A}_\infty$-algebra is a curved dg-algebra $\mathcal{A}$, its category of dg-modules $\operatorname{Mod}_\text{dg}^\text{st}(\mathcal{A})$
can be equipped with the model structure in two ways: one inherited from that of $\operatorname{Mod}_\infty^\text{st}(\mathcal{A})$, and the other its   ``usual'' model structure.  Fortunately these coincide.
The ``usual''
model structure on $\operatorname{Mod}_\text{dg}^\text{st}(\mathcal{A})$ is the one in which
\begin{itemize}
\item split surjections $\mathcal{F}'_\mathcal{A}$ are fibrations,
\item the subcategory
$\mathcal{W}'_\mathcal{A} \subseteq 
\operatorname{Mod}_\text{dg}^\text{st}(\mathcal{A})
$
of weak equivalences is taken
to be those morphisms which become quasi-isomorphisms under $\mathcal{Y} \circ \, Q_\mathcal{A}'$ where
$$
\mathcal{Y}(M) = \operatorname{Mod}_\text{dg}(\mathcal{A})( - ,M) 
$$
is the Yoneda embedding, and
\item the bar resolution $Q_\mathcal{A}' = - \overset{\infty}{\otimes}_{\mathcal{A}} \ \mathcal{A}[1]$
as a cofibrant replacement functor\footnote{Notice that in this case $M \overset{\infty}{\otimes}_{\mathcal{A}} \  \mathcal{A}[1] = (M[1] \odot \mathcal{A}[1]^{\otimes \geq 1})[-1]$ is the bar resolution up to signs on the codifferential.}.
\end{itemize}
On the subcategories where both model structures are defined, they agree: 
 \begin{itemize}
 \item $\operatorname{Mod}_\text{dg}^\text{st}(\mathcal{A}) \subseteq \operatorname{Mod_\infty^{\text{st}}}(\mathcal{A})$ is a full subcategory,
  \item 
  $\mathcal{F}'_\mathcal{A} = \mathcal{F}_\mathcal{A} \cap \operatorname{Mod}_\text{dg}^\text{st}(\mathcal{A})$ and
  $\mathcal{W}'_\mathcal{A} = \mathcal{W}_\mathcal{A} \cap \operatorname{Mod}_\text{dg}^\text{st}(\mathcal{A}),$ 
 \item the image of $Q_\mathcal{A}$ restricted to $\operatorname{Mod}_\text{dg}^\text{st}(\mathcal{A})$ lies in $\operatorname{Mod}_\text{dg}^\text{st}(\mathcal{A}),$ 
  \item $Q_\mathcal{A}'$ and $Q_\mathcal{A}$ are cofibrant replacement functors for both model structures, and 
\item the path space for any object in $\operatorname{Mod}_\text{dg}^\text{st}(\mathcal{A})$ 
 lies in $\operatorname{Mod}_\text{dg}^\text{st}(\mathcal{A}).$  
 \end{itemize}

When the curved dg-algebra in question is $\mathcal{A}=U_e(A)$ 
there are two model category structures on
$\operatorname{Mod}_\text{dg}^\text{st}(U_e(A))$:
one in which we forget that $\mathcal{A}$ came from $A$, and another  
coming from the identification with  
$\operatorname{Mod}_\infty^\text{st}(A)$.  These too are compatible.  In particular, 
\begin{itemize}
\item $\operatorname{Mod_\infty^{\text{st}}}(A) = \operatorname{Mod_\text{dg}^{\text{st}}}(\mathcal{A}),$ 
 \item $\mathcal{W}'_\mathcal{A} = \mathcal{W}_A $, and
 \item $\mathcal{F}'_\mathcal{A} = \mathcal{F}_A$.
\end{itemize}
This means $Q_A$, $Q'_\mathcal{A},$ and $Q_\mathcal{A}$ are all cofibrant replacement functors on $\operatorname{Mod}_\text{dg}^\text{st}(\mathcal{A})$ etc.

\begin{exmp}{\bf (matrix factorizations)}
\label{example-matrix-factorizations}
Given a commutative ring with identity $A$ and an element $W \in A$, one can consider $(A, 0, W)$
as a curved dg-algebra.  Dg-modules over $A$ which are finite rank and free as $A$-modules were first 
considered by Eisenbud under the name \emph{matrix factorizations} \cite{eisenbud}. The statements above imply the homotopy 
category of matrix factorizations includes as a full subcategory\footnote{Strictly speaking, at the dg-level matrix factorizations include as summand whose complement is contractible.} of $H^0(\operatorname{Mod_\infty}(A))$, where the coefficient ring $S$
equals $A$ itself.
\end{exmp}

\subsection{Restriction and extension of scalars.}
\label{subsection_restriction_and_extension_of_scalars}
In order to make the statements above precise, if not proved\footnote{We refer to \cite{armstrong} for complete proofs.}, we define the functors used in the Quillen adjunction assigned to a morphism of $\mathbf{A}_\infty$- algebras.

\begin{prop}{\bf ($\mathbf{A}_\infty$-restriction of scalars)}
\label{proposition-A-infinity-restriction-of-scalars}
Given a morphism $f_\bullet \colon A \to A'$ of $\mathbf{A}_\infty$-algebras, we have a dg-functor
$$
R_f \colon \operatorname{Mod}_\infty(A') \to \operatorname{Mod}_\infty(A)
$$
given by sending $(M, b^M_\bullet) \mapsto (M, b^M_\bullet(\mathbf{1} \odot (f_\bullet)^\otimes))$
and $\phi_\bullet \colon M \to M'$ to $\phi_\bullet(\mathbf{1} \odot (f_\bullet)^\otimes)$.
\end{prop}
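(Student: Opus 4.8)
The plan is to make the single map $F = (f_\bullet)^\otimes \colon (A[1])^\otimes \to (A'[1])^\otimes$ the engine of the whole construction, and to package restriction of scalars as precomposition with the comodule morphism
$$
\Phi_F = \mathbf{1} \odot F \colon M \odot (A[1])^\otimes \to M \odot (A'[1])^\otimes.
$$
First I would record the two structural facts about $F$ on which everything rests: that $f_\bullet$ being an $\mathbf{A}_\infty$-morphism means $F B^A = B^{A'} F$ (this is exactly the condition $B'F - FB = 0$ of Definition \ref{definition-of-S-linear-curved-unital-A-infinity-algebra-morphism}, with $B^A = \mathbf{1}^\otimes \otimes b_\bullet \otimes \mathbf{1}^\otimes$), and that the geometric series $F$ automatically commutes with the deconcatenation comultiplication, $\Delta' F = (F \smallboxtimes F)\Delta$, as one checks directly on a generating tensor.

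The heart of the proof is the intertwining identity
$$
\Phi_F \, \tilde{B}^M = B^M \, \Phi_F, \qquad \tilde{B}^M = \tilde{b}^M_\bullet \odot \mathbf{1}^\otimes + \mathbf{1} \odot B^A, \quad \tilde{b}^M_\bullet = b^M_\bullet(\mathbf{1} \odot F),
$$
which I would verify by splitting it into its $B^A$-part and its $b$-part. The $B^A$-part, $\Phi_F(\mathbf{1} \odot B^A) = (\mathbf{1} \odot B^{A'})\Phi_F$, is immediate from $F B^A = B^{A'} F$, since the $M$-slot is untouched and the identity factors through the tensor slot. The $b$-part, $\Phi_F(\tilde{b}^M_\bullet \odot \mathbf{1}^\otimes) = (b^M_\bullet \odot \mathbf{1}^\otimes)\Phi_F$, is exactly where coalgebra-compatibility of $F$ enters: on the right one applies $F$ to the whole tensor and then comultiplies in $(A'[1])^\otimes$, whereas on the left one comultiplies in $(A[1])^\otimes$ and applies $F$ to each factor, and $\Delta' F = (F\smallboxtimes F)\Delta$ identifies the two outputs. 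Granting the intertwining identity, the module axiom for the restricted structure is then almost free: by Proposition \ref{proposition-definition-of-mod-infinity-A} it suffices to check $\tilde{b}^M_\bullet \tilde{B}^M = 0$, and
$$
\tilde{b}^M_\bullet \tilde{B}^M = b^M_\bullet(\mathbf{1}\odot F)\tilde{B}^M = b^M_\bullet B^M \Phi_F = 0,
$$
using the $A'$-module equation $b^M_\bullet B^M = 0$.

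With the object assignment in place, I would dispatch the morphism level. For a morphism $\phi_\bullet \colon M \to M'$ of $A'$-modules the assignment is $R_f \phi_\bullet = \phi_\bullet \Phi_F$, which lands in $\operatorname{Hom}_S(M \odot (A[1])^\otimes, M')$ as required. That $R_f$ is a chain map, $\delta(R_f \phi_\bullet) = R_f(\delta \phi_\bullet)$, follows by inserting the intertwining identity for both $M$ and $M'$ into the formula for $\delta$ in Proposition \ref{proposition-definition-of-mod-infinity-A}; compatibility with composition, $R_f(\psi_\bullet \circ \phi_\bullet) = R_f\psi_\bullet \circ R_f\phi_\bullet$, is again the coalgebra-morphism property of $F$ applied to $\psi_\bullet(\phi_\bullet \odot \mathbf{1}^\otimes)\Phi_F$, and preservation of identities uses $F(1_\otimes) = 1_\otimes$. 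Finally I would confirm strict unitality of the restricted module: since $f_1(\eta) = \eta'$ and $f_\ell$ annihilates any tensor containing $\eta$ for $\ell \neq 1$, the only surviving contribution to $\tilde{b}^M_\bullet$ on an inserted $\eta$ is through $b^M_\bullet$ with $\eta'$ inserted, so the unit conditions for $A$ follow from those for $A'$.

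The step I expect to be the genuine obstacle — or at least the only place demanding care — is the $b$-part of the intertwining identity, since it is there that the comultiplication is genuinely invoked and the Koszul signs must be tracked across the splitting. Once that identity is established, the remaining verifications are formal consequences of it together with the coalgebra-morphism property of $F$.
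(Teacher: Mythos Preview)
Your approach is correct and essentially the same as the paper's: both hinge on the intertwining identity $\Phi_F \tilde{B}^M = B^M \Phi_F$ (which the paper packages as a three-square commutative diagram with $\mathbf{1}\odot(f_\bullet)^\otimes$ as the horizontal map), and then read off the dg-functor properties from it. Your version is more explicit---decomposing the intertwining into its $B^A$-part and $b$-part via the coalgebra-morphism property of $F$, and also verifying the restricted module axiom and strict unitality, which the paper leaves implicit.
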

\begin{proof}
The diagram 
$$
\begin{tikzcd}
M \odot (A[1])^\otimes \arrow{d}{B} \arrow{r}{\mathbf{1} \odot (f_\bullet)^\otimes} & M \odot(A'[1])^\otimes  \arrow{d}{B}\\
M \odot (A[1])^\otimes \arrow{d}{\phi_\bullet \odot \mathbf{1}^\otimes} \arrow{r}{\mathbf{1} \odot (f_\bullet)^\otimes} & M \odot (A'[1])^\otimes  \arrow{d}{\phi_\bullet(\mathbf{1} \odot (f_\bullet)^\otimes) \odot \mathbf{1}^\otimes}\\
M' \odot (A[1])^\otimes \arrow{d}{B} \arrow{r}{\mathbf{1} \odot (f_\bullet)^\otimes} & M' \odot(A'[1])^\otimes  \arrow{d}{B}\\
M' \odot(A[1])^\otimes  \arrow{r}{\mathbf{1} \odot(f_\bullet)^\otimes} & M' \odot (A'[1])^\otimes
\end{tikzcd}
$$
commutes.
So $[B, R_f(\phi_\bullet) \odot \mathbf{1}^\otimes] = R_f(\delta\phi_\bullet) \odot \mathbf{1}^\otimes$ as needed, and it is easy to see that composition commutes with $R_f:$\\
$
\psi_\bullet( \phi_\bullet \odot \mathbf{1}^\otimes) \odot \mathbf{1}^\otimes \mapsto 
$ 
$$ \psi_\bullet( \phi_\bullet(\mathbf{1} \odot (f_\bullet)^\otimes) \odot (f_\bullet)^\otimes) \odot \mathbf{1}^\otimes
= \psi_\bullet(\mathbf{1} \odot (f_\bullet)^\otimes) \odot \mathbf{1}^\otimes \  
\circ 
 \ \   \phi_\bullet(\mathbf{1} \odot (f_\bullet)^\otimes) \odot \mathbf{1}^\otimes .
$$
\end{proof}

\begin{prop}{\bf (extension of scalars)}
\label{proposition-adjoint-to-restriction-of-scalars}
Again, given a morphism $f_\bullet \colon A \to A'$ of $\mathbf{A}_\infty$-algebras,
$R_f$ carries strict morphisms to strict morphisms, and thus can be
considered as a functor $\operatorname{Mod}^\text{st}_\infty(A') \to \operatorname{Mod}^\text{st}_\infty(A).$ 
Under the identifications 
$$\operatorname{Mod}^\text{st}_\infty(A') = \operatorname{Mod}_\text{dg}^\text{st}(U_e(A')) \  \ \text{ and } \  \ \operatorname{Mod}^\text{st}_\infty(A) = \operatorname{Mod}_\text{dg}^\text{st}(U_e(A))$$ 
is exactly the usual restriction of scalars functor, and thus admits
extension of scalars as  a left adjoint
$$
L_f \colon \operatorname{Mod}^\text{st}_\infty(A) \to \operatorname{Mod}^\text{st}_\infty(A')
$$
with 
$$
L_f(M) = M \otimes_{U_e(A)} U_e(A')
$$
\end{prop}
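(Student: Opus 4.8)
The plan is to establish the statement in three stages: first that $R_f$ respects strictness, then that under the identifications of Theorem \ref{theorem-isomorphism-of-A-and-UeA-modules} it is literally restriction of scalars along the dg-algebra map $U_e(f)$, and finally that such restriction automatically admits extension of scalars as a left adjoint.

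For the first stage, recall from Proposition \ref{proposition-A-infinity-restriction-of-scalars} that $R_f$ is a dg-functor, so the identity $\delta R_f(\phi_\bullet) = R_f(\delta\phi_\bullet)$ guarantees that it carries closed morphisms to closed morphisms; it therefore suffices to check that $R_f$ preserves linearity. If $\phi_\bullet = \phi_1$ is linear, then $R_f(\phi_\bullet) = \phi_1(\mathbf{1} \odot (f_\bullet)^\otimes)$, and since $f_\bullet$ carries no $f_0$ term the map $(f_\bullet)^\otimes$ lands in $A'[1]^{\otimes 0}$ only on the summand $A[1]^{\otimes 0}$. Hence $\phi_1(\mathbf{1}\odot(f_\bullet)^\otimes)$ vanishes except on $M \odot 1_\otimes$, where it equals $\phi_1$; it is again linear, so $R_f$ descends to a functor $\operatorname{Mod}^{\text{st}}_\infty(A') \to \operatorname{Mod}^{\text{st}}_\infty(A)$.

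For the second stage, the map $U_e(f) \colon U_e(A) \to U_e(A')$ is, by the functoriality of Theorem \ref{theorem-universal-property-of-U_e(A)}, the dg-algebra morphism adjoint to the $\mathbf{A}_\infty$-morphism $i_\bullet \circ (f_\bullet)^\otimes \colon A \to U_e(A')$; on a generator $\omega[\alpha] \in X$ it sends $\omega[\alpha]$ to the sum of the generators produced by the term-wise action of $(f_\bullet)^\otimes$, extended multiplicatively. I would then compare the two $U_e(A)$-module structures on $M$: the one coming from $R_f(M) = (M, b^M_\bullet(\mathbf{1}\odot(f_\bullet)^\otimes))$ through Theorem \ref{theorem-isomorphism-of-A-and-UeA-modules}, and the one obtained by restricting the $U_e(A')$-module $M$ along $U_e(f)$. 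Using the multiplication formula $m \cdot \omega[\beta] = -(-1)^m b^M_\bullet(m \odot \beta)$, the restricted action on a generator unwinds to $-(-1)^m b^M_\bullet(m \odot (f_\bullet)^\otimes(\alpha)) = -(-1)^m (b^M_\bullet(\mathbf{1}\odot(f_\bullet)^\otimes))(m \odot \alpha)$, which is exactly the $R_f(M)$-action; the differentials agree because $(f_\bullet)^\otimes$ fixes $1_\otimes$. The term-wise bookkeeping here is the same identity that governs composition of geometric series in Proposition \ref{proposition-composition-of-algebra-morphisms}.

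The hard part of this stage, and the main obstacle overall, is precisely this matching: one must track how $U_e(f)$, being an algebra map, spreads a single generator $\omega[\alpha]$ into the full sum produced by $(f_\bullet)^\otimes$ and verify, with Koszul signs, that feeding this sum into the $U_e(A')$-action reproduces $b^M_\bullet(\mathbf{1}\odot(f_\bullet)^\otimes)$ on the nose. Once this identification is in hand, the third stage is formal: restriction of scalars along a morphism of $\mathbf{Alg}_\text{dg}^\ast$ (which preserves differential and curvature) admits the tensor product $- \otimes_{U_e(A)} U_e(A')$ as left adjoint via the usual tensor--hom adjunction, and this adjunction descends to the strict, i.e.\ $Z^0$, subcategories. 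This yields the extension-of-scalars functor $L_f(M) = M \otimes_{U_e(A)} U_e(A')$ as claimed.
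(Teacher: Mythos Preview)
Your argument is correct and follows the same line as the paper's, but you supply considerably more detail. The paper's own proof consists of exactly your first stage: it observes that $R_f$ is a dg-functor (hence closed morphisms go to closed morphisms) and that linear morphisms stay linear, and stops there. Your second and third stages---the explicit check that under Theorem~\ref{theorem-isomorphism-of-A-and-UeA-modules} the functor $R_f$ is restriction along $U_e(f)$, and the invocation of the standard tensor--hom adjunction---are left implicit in the paper. Your treatment of those points is accurate; in particular your description of $U_e(f)$ on generators and the comparison of module actions is correct, and your worry about ``spreading'' is slightly overstated: on a single generator $\omega[\alpha]$ the image $U_e(f)(\omega[\alpha])$ is already a sum of single generators (no $\smallboxtimes$-products appear), so the matching with $b^M_\bullet(\mathbf{1}\odot(f_\bullet)^\otimes)$ is as direct as you indicate.
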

\begin{proof}
$R_f$ is a dg-functor, so it carries closed morphisms to closed morphisms.  Furthermore, linear morphisms ($\phi_\bullet = \phi_1$) remain linear.
\end{proof}

\newpage

 {\small
 
\bibliography{curved_A-infinity_Adj_and_Htpy}

\begin{thebibliography}{FOOO09}

\bibitem[Arm15]{armstrong}
Jeffrey Armstrong.
\newblock PhD thesis, Drexel University, 2015.

\bibitem[Eis80]{eisenbud}
David Eisenbud.
\newblock Homological algebra on a complete intersection, with an application
  to group representations.
\newblock {\em Trans. Amer. Math. Soc.}, 260(1):35--64, 1980.

\bibitem[EML53]{eilenberg-maclane-bar-construction}
Samuel Eilenberg and Saunders Mac~Lane.
\newblock On the groups of {$H(\Pi,n)$}. {I}.
\newblock {\em Ann. of Math. (2)}, 58:55--106, 1953.

\bibitem[FOOO09]{fooo}
Kenji Fukaya, Yong-Geun Oh, Hiroshi Ohta, and Kaoru Ono.
\newblock {\em Lagrangian intersection {F}loer theory: anomaly and
  obstruction.}, volume~46 of {\em AMS/IP Studies in Advanced Mathematics}.
\newblock American Mathematical Society, Providence, RI; International Press,
  Somerville, MA, 2009.

\bibitem[Ger63]{gerstenhaber-63}
M.~Gerstenhaber.
\newblock The cohomology structure of an associative ring.
\newblock {\em Ann. Math.}, 78:267--288, 1963.

\bibitem[Kad80]{kadeishvili-80}
T.~V. Kadei{\v{s}}vili.
\newblock On the theory of homology of fiber spaces.
\newblock {\em Uspekhi Mat. Nauk}, 35(3(213)):183--188, 1980.
\newblock International Topology Conference (Moscow State Univ., Moscow, 1979).

\bibitem[Kad85]{kadeishvili-85}
T.~V. Kadeishvili.
\newblock The category of differential coalgebras and the category of
  {$A(\infty)$}-algebras.
\newblock {\em Trudy Tbiliss. Mat. Inst. Razmadze Akad. Nauk Gruzin. SSR},
  77:50--70, 1985.

\bibitem[KLN10]{keller-lowen-nicolas}
Bernhard Keller, Wendy Lowen, and Pedro Nicol{\'a}s.
\newblock On the (non)vanishing of some ``derived'' categories of curved dg
  algebras.
\newblock {\em J. Pure Appl. Algebra}, 214(7):1271--1284, 2010.

\bibitem[KS01]{kontsevich-soibelman-01}
Maxim Kontsevich and Yan Soibelman.
\newblock Homological mirror symmetry and torus fibrations.
\newblock In {\em Symplectic geometry and mirror symmetry ({S}eoul, 2000)},
  pages 203--263. World Sci. Publ., River Edge, NJ, 2001.

\bibitem[KS09]{kontsevich-soibelman}
M.~Kontsevich and Y.~Soibelman.
\newblock Notes on {$A_\infty$}-algebras, {$A_\infty$}-categories and
  non-commutative geometry.
\newblock In {\em Homological mirror symmetry}, volume 757 of {\em Lecture
  Notes in Phys.}, pages 153--219. Springer, Berlin, 2009.

\bibitem[LH03]{lefevre-hasegawa}
Kenji Lef\`evre-Hasegawa.
\newblock {\it {Sur les $A_\infty$-cat\'egories}}.
\newblock Th\'ese de doctorat, Universit\'e Denis Diderot - Paris 7, November
  2003, arXiv:math/0310337.

\bibitem[LVdB06]{lowen-van-den-bergh-04}
Wendy Lowen and Michel Van~den Bergh.
\newblock Deformation theory of abelian categories.
\newblock {\em Trans. Amer. Math. Soc.}, 358(12):5441--5483 (electronic), 2006.

\bibitem[Mer99]{merkulov-99}
S.~A. Merkulov.
\newblock Strong homotopy algebras of a {K}\"ahler manifold.
\newblock {\em Internat. Math. Res. Notices}, (3):153--164, 1999.

\bibitem[NZ13]{nikolov-zahariev-13}
Nikolay~M. Nikolov and Svetoslav Zahariev.
\newblock Curved {$A_\infty$}-algebras and {C}hern classes.
\newblock {\em Pure Appl. Math. Q.}, 9(2):333--369, 2013.

\bibitem[Orl04]{orlov}
D.~O. Orlov.
\newblock Triangulated categories of singularities and {D}-branes in
  {L}andau-{G}inzburg models.
\newblock {\em Tr. Mat. Inst. Steklova}, 246(Algebr. Geom. Metody, Svyazi i
  Prilozh.):240--262, 2004.
\newblock {Translation in Proc. Steklov Inst. Math. 2004, no. 3 (246),
  227--248.}

\bibitem[Pos11]{positselski}
Leonid Positselski.
\newblock Two kinds of derived categories, {K}oszul duality, and
  comodule-contramodule correspondence.
\newblock {\em Mem. Amer. Math. Soc.}, 212(996):vi+133, 2011.

\bibitem[Pos12]{positselski-weak}
Leonid Positselski.
\newblock {Weakly curved A-infinity algebras over a topological local ring},
  2012, {\it arXiv:1202.2697}.

\bibitem[Pro86]{proute-86}
Alain Prout\'e.
\newblock {\it {Alg\`ebres diff\`erentielles fortement homotopiquement
  associatives ($A_\infty$-alg\`ebres)}}.
\newblock Th\'ese de doctorat, Universit\'e Paris 7, 1986.

\bibitem[PS95]{penkava-schwarz}
Michael Penkava and Albert Schwarz.
\newblock {$A_\infty$} algebras and the cohomology of moduli spaces.
\newblock In {\em Lie groups and {L}ie algebras: {E}. {B}. {D}ynkin's
  {S}eminar}, volume 169 of {\em Amer. Math. Soc. Transl. Ser. 2}, pages
  91--107. Amer. Math. Soc., Providence, RI, 1995.

\bibitem[Seg]{segal-mathoverflow}
Ed~Segal.
\newblock {Homotopic morphisms between curved A-infinity algebras}.
\newblock MathOverflow, http://mathoverflow.net/q/86821.

\bibitem[Sta63]{stasheff-published-thesis}
James~Dillon Stasheff.
\newblock Homotopy associativity of {$H$}-spaces. {I}, {II}.
\newblock {\em Trans. Amer. Math. Soc. 108 (1963), 275-292; ibid.},
  108:293--312, 1963.

\end{thebibliography}
\bibliographystyle{halpha}  

\noindent
{Department of Mathematics, Drexel University, Philadelphia, PA 19104\\
\texttt{jja56@drexel.edu}}

\noindent
{Department of Mathematics, Drexel University, Philadelphia, PA 19104\\
\texttt{pclarke@math.drexel.edu}}

}

\end{document}